\newcommand{\R}{\mathbb{R}}
\begin{document}

\title*{Herglotz' generalized variational principle and contact type Hamilton-Jacobi equations}
\titlerunning{Herglotz' generalized variational principle}
% Use \titlerunning{Short Title} for an abbreviated version of
% your contribution title if the original one is too long
\author{Piermarco Cannarsa, Wei Cheng, Kaizhi Wang and Jun Yan}
% Use \authorrunning{Short Title} for an abbreviated version of
% your contribution title if the original one is too long
\institute{Piermarco Cannarsa \at Dipartimento di Matematica, Universit\`a di Roma ``Tor Vergata'', Via della Ricerca Scientifica 1, 00133 Roma, Italy, \email{cannarsa@mat.uniroma2.it}
\and Wei Cheng \at Department of Mathematics, Nanjing University, Nanjing 210093, China, \email{chengwei@nju.edu.cn} \and Kaizhi Wang \at School of Mathematical Sciences, Shanghai Jiao Tong University, Shanghai 200240, China, \email{kzwang@sjtu.edu.cn}
\and Jun Yan \at School of Mathematical Sciences, Fudan University and Shanghai Key Laboratory for Contemporary Applied Mathematics, Shanghai 200433, China, \email{yanjun@fudan.edu.cn}}
%
% Use the package "url.sty" to avoid
% problems with special characters
% used in your e-mail or web address
%
\maketitle

\abstract{We develop an approach for the analysis of fundamental solutions to Hamilton-Jacobi equations of contact type based on a generalized variational principle proposed by Gustav Herglotz. We also give a quantitative Lipschitz estimate on the associated minimizers.}

\section{Introduction}
The so called {\em generalized variational principle} was proposed by Gustav Herglotz in 1930 (see \cite{Herglotz1} and \cite{Herglotz2}).  It generalizes classical variational principle by defining the functional, whose extrema are sought, by a differential equation. More precisely, the functional $u$ is defined in an implicit way by an ordinary differential equation
\begin{equation}\label{eq:Herglotz_intro}
	\dot{u}(s)=F(s,\xi(s),\dot{\xi}(s),u(s)),\quad s\in[0,t],
\end{equation}
with $u(t)=u_0\in\R$, for $t>0$, a function $F\in C^2(\R\times\R^n\times\R^n\times\R,\R)$ and a piecewise $C^1$ curve $\xi:[0,t]\to\R^n$. Here, $u=u[\xi,s]$ can be regarded as a functional, on a space of paths $\xi(\cdot)$. The generalized variational principle of Herglotz is as follows:

\medskip

{\em Let the functional $u=u[\xi,t]$ be defined by \eqref{eq:Herglotz_intro} with $\xi$ in the space of piecewise $C^1$ functions on $[0,t]$. Then the value of the functional $u[\xi,t]$ is an extremal for the function $\xi$ such that the variation $\frac d{d\varepsilon}u[\xi+\varepsilon\eta,t]=0$ for arbitrary piecewise $C^1$ function $\eta$ such that $\eta(0)=\eta(t)=0$.}

%{\em Let the functional $u=u[\xi,t]$ be defined by \eqref{eq:Herglotz_intro} with $\xi$ in the space of piecewise $C^1$ functions on $[0,t]$, and $\eta$ be an arbitrary piecewise $C^1$ function such that $\eta(0)=\eta(t)=0$. Then the value of the functional $u[\xi,t]$ is an extremal for the function $\xi$ such that the variation $\frac d{d\varepsilon}u[\xi+\varepsilon\eta,t]=0$.}

\medskip

Herglotz reached the idea of the generalized variational principle through his work on contact transformations and their connections with Hamiltonian systems and Poisson brackets. His work was motivated by ideas from S. Lie, C. Carath\'eodory and other researchers.  An important reference on the generalized variational principle is the monograph \cite{GGG}. The variational principle of Herglotz is important for many reasons:
\begin{itemize}[--]
  \item The solutions of the equations \eqref{eq:Herglotz_intro} determine a family of contact transformations, see \cite{GGG,Caratheodory,Eisenhart,Giaquinta-Hildebrandt};
  \item  The generalized variational principle gives a variational description of energy-nonconservative processes even when $F$ in \eqref{eq:Herglotz_intro} is independent of $t$.
  \item If $F$ has the form $F=-\lambda u+L(x,v)$, then the relevant problems are closely connected to the Hamilton-Jacobi equations with discount factors (see, for instance, \cite{DFIZ,DFIZ2,Cannarsa-Quincampoix,Ishii-Mitake-Tran1,Ishii-Mitake-Tran2,Maro-Sorrentino,Gomes2008,Iturriaga-Sanchez-Morgado}). As an extension to nonlinear discounted problems, various examples are discussed in \cite{Chen-Cheng-Zhao,Zhao-Cheng}.
  \item Even for a energy-nonconservative process which can be described with the generalized variational principle, one can systematically derive conserved quantities as Noether's theorems such as \cite{GG1,GG2};
  \item The generalized variational principle provides a link between the mathematical structure of control and optimal control theories and contact transformation (see \cite{Furuta-Sano});
  \item There are some interesting connections between contact transformations and  equilibrium thermodynamics (see, for instance, \cite{Mrugala}).
\end{itemize}

In this note, we will clarify more connections between the generalized variational principle of Herglotz and Hamilton-Jacobi theory motivated by recent works in \cite{Wang-Wang-Yan1,Wang-Wang-Yan2} under a set of Tonelli-like conditions. We will begin with generalized variational principle of Herglotz in the frame of Lagrangian formalism  different from the methods used in \cite{Wang-Wang-Yan1,Wang-Wang-Yan2}. Throughout this paper, let $L:\R^n\times\R\times\R^n$ be a function of class $C^2$ such that the following standing assumptions are satisfied:
\begin{enumerate}[(L1)]
  \item $L(x,r,\cdot)>0$ is strictly convex for all $(x,r)\in \R^n\times\R$.
  \item There exist two superlinear nondecreasing function $\overline{\theta}_0,\theta_0:[0,+\infty)\to[0,+\infty)$, $\theta_0(0)=0$ and $c_0>0$, such that
  $$
  \overline{\theta}_0(|v|)\geqslant L(x,0,v)\geqslant\theta_0(|v|)-c_0,\quad (x,v)\in\R^n\times\R^n.
  $$
  \item There exists $K>0$ such that
  $$
  |L_r(x,r,v)|\leqslant K,\quad (x,r,v)\in \R^n\times\R\times\R^n.
  $$
\end{enumerate}

\begin{remark}\label{rem:condition}
	For each $r\in \R$,	from the conditions (L2) and (L3) we could take
	$$
	\overline{\theta}_r:=\overline{\theta}_0+K|r|, \quad\theta_r:=\theta_0,  \quad c_r:=c_0+K|r|,
	$$
	such that
	\begin{equation}\label{sup_lin}
  	    \overline{\theta}_r(|v|)\geqslant L(x,r,v)\geqslant\theta_r(|v|)-c_r,\quad (x,v)\in\R^n\times\R^n.
  	\end{equation}
  	Obviously, $\overline{\theta}_r$ and  $\theta_r$  are both nonnegative, superlinear and nondecreasing functions, $c_r>0$.
\end{remark}

It is natural to introduce the associated Hamiltonian
$$
H(x,r,p)=\sup_{v\in\R^n}\{\langle p,v\rangle-L(x,r,v)\},\quad (x,r,p)\in \R^n\times\R\times(\R^n)^*.
$$

Let $x,y\in\R^n$, $t>0$ and $u_0\in\R$. Set
\begin{align*}
	\Gamma^t_{x,y}=\{\xi\in W^{1,1}([0,t],\R^n): \xi(0)=x,\ \xi(t)=y\}.
\end{align*}
We consider a variational problem
\begin{equation}\label{eq:M}
	\text{Minimize}\quad u_0+\inf\int^t_0L(\xi(s),u_{\xi}(s),\dot{\xi}(s))\ ds,
\end{equation}
where the infimum is taken over all $\xi\in\Gamma^t_{x,y}$ such that the Carath\'eodory equation
\begin{equation}\label{eq:caratheodory_L_intro}
	\dot{u}_{\xi}(s)=L(\xi(s),u_{\xi}(s),\dot{\xi}(s)),\quad a.e.\ s\in[0,t],
\end{equation}
admits an absolutely continuous solution $u_{\xi}$ with initial condition $u_{\xi}(0)=u_0$. It is already known that the variational problem \eqref{eq:M} with subsidiary conditions \eqref{eq:caratheodory_L_intro} is closely connected to the Hamilton-Jacobi equations in the form
\begin{equation}\label{eq:static_intro}
	H(x,u(x),Du(x))=c.
\end{equation}
The readers can refer to \cite{Giaquinta-Hildebrandt} for a systematic approach of Hamilton-Jacobi equations in the form \eqref{eq:static_intro} especially in the context of contact geometry.

In \cite{Wang-Wang-Yan1,Wang-Wang-Yan2}, a weak KAM type theory on equations \eqref{eq:static_intro} was developed on compact manifolds under the aforementioned Tonelli-like conditions. Problem \eqref{eq:M} is understood as an implicit variational principle (\cite{Wang-Wang-Yan1}) and, by introducing the positive and negative Lax-Oleinik semi-groups, an existence result for weak KAM type solutions of \eqref{eq:static_intro} was obtained provided $c$ in the right side of equation \eqref{eq:static_intro} belongs to the set of critical values (\cite{Wang-Wang-Yan2}). The same approach adapts to the evolutionary equations in the form
\begin{equation}\label{eq:evolutionary_intro}
	D_tu+H(x,u,D_xu)=0.
\end{equation}

Unlike the methods used in \cite{Wang-Wang-Yan1,Wang-Wang-Yan2}, in this note, our approach of the equations  \eqref{eq:static_intro} and \eqref{eq:evolutionary_intro} is based on the the variational problem \eqref{eq:M} under subsidiary conditions \eqref{eq:caratheodory_L_intro}. We give all the details of such a Tonelli-like theory and its connection to viscosity solutions of \eqref{eq:static_intro} and \eqref{eq:evolutionary_intro} .

In view of Proposition \ref{existence} below, the infimum in \eqref{eq:M} can be achieved. Suppose that $\xi\in\Gamma^t_{x,y}$ is a minimizer for \eqref{eq:M} where $u_{\xi}$ is uniquely determined by \eqref{eq:caratheodory_L_intro} with initial condition $u_{\xi}(0)=u_0$. Then we call such $\xi$ an {\em extremal}. Due to Proposition \ref{Herglotz_contact} below, each extremal $\xi$ and associated $u_{\xi}$ are of class $C^2$ and satisfy the Herglotz equation (Generalized Euler-Lagrange equation by Herglotz)
\begin{equation}\label{eq:Herglotz1}
	\begin{split}
		&\,\frac d{ds}L_v(\xi(s),u_{\xi}(s),\dot{\xi}(s))\\
	=&\,L_x(\xi(s),u_{\xi}(s),\dot{\xi}(s))+L_u(\xi(s),u_{\xi}(s),\dot{\xi}(s))L_v(\xi(s),u_{\xi}(s),\dot{\xi}(s)).
	\end{split}
\end{equation}
Moreover, let $p(s)=L_v(\xi(s),u_{\xi}(s),\dot{\xi}(s))$ be the so called dual arc. Then $p$ is also of class $C^2$ and we conclude that $(\xi,p,u_{\xi})$ satisfies the following Lie equation
\begin{equation}\label{eq:contact}
\begin{cases}
  		\dot{\xi}(s)=H_p(\xi(s),u_{\xi}(s),p(s));\\
  		\dot{p}(s)=-H_x(\xi(s),u_{\xi}(s),p(s))-H_u(\xi(s),u_{\xi}(s),p(s))p(s);\\
  		\dot{u}_{\xi}(s)=p(s)\cdot\dot{\xi}(s)-H(\xi(s),u_{\xi}(s),p(s)),
\end{cases}
\end{equation}
where the reader will recognize the classical system of characteristics for \eqref{eq:static_intro}.

The paper is organized as follows: In Section 2, we afford a detailed and rigorous treatment of \eqref{eq:M} under subsidiary conditions \eqref{eq:caratheodory_L_intro}. In Section 3, we study the regularity of the minimizers and deduce the Herglotz equation \eqref{eq:Herglotz1} and Lie equation \eqref{eq:contact} as well. In Section 4, we show that the two approaches between \cite{Wang-Wang-Yan1,Wang-Wang-Yan2} and ours are equivalent. We also sketch the way to move Herglotz' variational principle to manifolds.

\section{Existence of minimizers in Herglotz' variational principle}

Fix $x_0,x\in\R^n$, $t>0$ and $u_0\in\R$. Let $\xi\in\Gamma^t_{x_0,x}$, we consider the Carath\'eodory equation
\begin{equation}\label{eq:app_caratheodory_L}
	\begin{cases}
		\dot{u}_{\xi}(s)=L(\xi(s),u_{\xi}(s),\dot{\xi}(s)),\quad a.e.\ s\in[0,t],&\\
		u_{\xi}(0)=u_0.&
	\end{cases}
\end{equation}
We define the action functional
\begin{equation}\label{eq:app_fundamental_solution}
	J(\xi):=\int^t_0L(\xi(s),u_{\xi}(s),\dot{\xi}(s))\ ds,
\end{equation}
where $\xi\in\Gamma^t_{x_0,x}$ and $u_{\xi}$ is defined in \eqref{eq:app_caratheodory_L} by Proposition \ref{caratheodory} in Appendix. Notice that Carath\'eodory's theorem (Proposition \ref{caratheodory}) is just a local result, but the existence and uniqueness of the solution of \eqref{eq:app_caratheodory_L} holds on $[0,t]$ since condition (L3) and that $\xi\in\mathcal{A}$. Our purpose is to minimize $J(\xi)$ over
\begin{align*}
	\mathcal{A}=\{\xi\in\Gamma^t_{x_0,x}: \text{\eqref{eq:app_caratheodory_L} admits an absolutely continuous solution $u_{\xi}$}\}.
\end{align*}
Notice that $\mathcal{A}\not=\varnothing$ because it contains all piecewise $C^1$ curves connecting $x_0$ to $x$. It is not hard to check that, for each $a\in\R$,
\begin{align*}
	\mathcal{A}=\mathcal{A}':=\{\xi\in\Gamma^t_{x_0,x}: \text{the function $s\mapsto L(\xi(s),a,\dot{\xi}(s))$ belongs to $L^1([0,t])$}\}.
\end{align*}
Indeed, If $\xi\in\mathcal{A}$, then $L(\xi(s),u_{\xi}(s),\dot{\xi}(s))$ is integrable on $[0,t]$ and $u_{\xi}$ is bounded. Thus $\xi\in\mathcal{A}'$ since
\begin{align*}
	|L(\xi,0,\dot{\xi})|\leqslant|L(\xi,u_{\xi},\dot{\xi})|+K|u_{\xi}|.
\end{align*}
On the other hand, if $\xi\in\mathcal{A}'$, then
\begin{align*}
	\dot{u}_{\xi}\leqslant L(\xi,0,\dot{\xi})+K|u_{\xi}|.
\end{align*}
Therefore, $\xi\in\mathcal{A}$.

For the following estimate, we define $L_0(x,v):=L(x,0,v)$.

\begin{lemma}\label{u_low_bound}
	Let $x_0,x\in\R^n$, $t>0$, $u_0\in\R$. Given $\xi\in\Gamma^t_{x_0,x}$ such that \eqref{eq:app_caratheodory_L} admits an absolutely continuous solution, then we have that
	\begin{equation}\label{eq:u_low_bound2}
		|u_{\xi}(s)|\leqslant\exp(Ks)(|u_0|+c_0s)
	\end{equation}
	if $u_{\xi}(s)<0$. In particular, we have
	\begin{equation}\label{eq:u_low_bound}
		u_{\xi}(s)\geqslant-\exp(Ks)(|u_0|+c_0s),\quad s\in[0,t].
	\end{equation}
\end{lemma}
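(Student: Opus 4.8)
The plan is to estimate $u_\xi$ from below (equivalently, to bound $|u_\xi|$ on the set where $u_\xi$ is negative) by a Gronwall-type argument applied directly to the Carath\'eodory equation \eqref{eq:app_caratheodory_L}. The only structural inputs needed are the lower bound $L(x,0,v)\geqslant\theta_0(|v|)-c_0\geqslant -c_0$ from (L2) together with the Lipschitz bound $|L_r|\leqslant K$ from (L3). First I would rewrite the right-hand side of the Carath\'eodory equation using the mean value theorem in the $r$ variable: for a.e.\ $s$,
\[
	\dot u_\xi(s) = L(\xi(s),u_\xi(s),\dot\xi(s)) = L(\xi(s),0,\dot\xi(s)) + L_r(\xi(s),\sigma(s),\dot\xi(s))\,u_\xi(s),
\]
for some $\sigma(s)$ between $0$ and $u_\xi(s)$, hence $\dot u_\xi(s)\geqslant L_0(\xi(s),\dot\xi(s)) - K|u_\xi(s)| \geqslant -c_0 - K|u_\xi(s)|$.

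Next I would localize to the (relatively open) subset of $[0,t]$ where $u_\xi(s)<0$. On that set $|u_\xi(s)| = -u_\xi(s)$, so the inequality above becomes $\dot u_\xi(s) \geqslant -c_0 + K\,u_\xi(s)$, i.e.\ $\frac{d}{ds}\bigl(e^{-Ks}u_\xi(s)\bigr) \geqslant -c_0 e^{-Ks}$. The subtlety is that the set $\{u_\xi<0\}$ need not be a single interval starting at $0$, so one cannot naively integrate from $0$. I would handle this by fixing $s$ with $u_\xi(s)<0$ and letting $s_0 := \sup\{\tau\in[0,s] : u_\xi(\tau)\geqslant 0\}$ (with $s_0:=0$ if no such $\tau$ exists); by continuity of $u_\xi$ either $u_\xi(s_0)=0$ or $s_0=0$ and $u_\xi(0)=u_0<0$, and in both cases $u_\xi(s_0)\geqslant -|u_0|$, while $u_\xi<0$ throughout $(s_0,s)$. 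Integrating the differential inequality over $[s_0,s]$ gives
\[
	e^{-Ks}u_\xi(s) \geqslant e^{-Ks_0}u_\xi(s_0) - c_0\!\int_{s_0}^{s}\! e^{-K\tau}\,d\tau \geqslant -|u_0| - c_0 s,
\]
whence $u_\xi(s)\geqslant -e^{Ks}\bigl(|u_0| + c_0 s\bigr)$, which is precisely \eqref{eq:u_low_bound2}; since the bound is vacuous where $u_\xi(s)\geqslant 0$, inequality \eqref{eq:u_low_bound} follows for all $s\in[0,t]$.

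The main obstacle I anticipate is purely one of rigor rather than of substance: justifying the manipulation of the differential inequality on the non-interval set $\{u_\xi<0\}$, and making sure the Gronwall step is valid for merely absolutely continuous $u_\xi$ (so that the product rule and the fundamental theorem of calculus apply to $e^{-Ks}u_\xi(s)$, which they do since $s\mapsto e^{-Ks}$ is $C^1$ and $u_\xi$ is absolutely continuous). An equivalent and perhaps cleaner route would be to apply Gronwall's inequality to the function $\phi(s):=\max\{-u_\xi(s),0\}$, which is absolutely continuous with $\dot\phi(s)\leqslant c_0 + K\phi(s)$ a.e.\ and $\phi(0)\leqslant|u_0|$, yielding $\phi(s)\leqslant e^{Ks}(|u_0|+c_0 s)$ directly; I would present whichever version reads more transparently.
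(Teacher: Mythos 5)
Your proof is correct and follows essentially the same strategy as the paper: fix a time where $u_{\xi}<0$, go back to the last time $u_{\xi}$ was nonnegative (or to $0$), use (L2)--(L3) to get $\dot u_{\xi}\geqslant -c_0-K|u_{\xi}|$ on that interval, and close with a Gronwall-type estimate. The only cosmetic difference is that you integrate with the explicit factor $e^{-Ks}$ whereas the paper applies the integral form of Gronwall's inequality to $|u_{\xi}|$; both yield the same bound.
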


\begin{proof}
	Let $x_0,x\in\R^n$, $t>0$, $u_0\in\R$ and $\xi\in\mathcal{A}$. Suppose that $u_{\xi}(s_0)<0$, $s_0\in(0,t]$. We define $E=\{s\in[0,s_0): u_{\xi}(s)\geqslant 0\}$ and
	\begin{align*}
		a=\begin{cases}
			0& E=\varnothing, \\
			\sup E & E\not=\varnothing.
		\end{cases}
	\end{align*}
	Then, we have that $u_{\xi}(s)\leqslant0$ for all $s\in[a,s_0]$ and $u_{\xi}(a)=0$ if $E\not=\varnothing$. Now, we are assuming that $E\not=\varnothing$. For any $s\in[a,s_0]$ we have that
	\begin{align*}
		-|u_{\xi}(s)|=&\,u_{\xi}(s)=u_{\xi}(a)+\int^s_aL(\xi(\tau),u_{\xi}(\tau),\dot{\xi}(\tau))\ d\tau\\
		\geqslant&\,-|u_{\xi}(a)|+\int^s_aL_0(\xi(\tau),\dot{\xi}(\tau))\ d\tau-K\int^s_a|u_{\xi}(\tau)|\ d\tau\ d\tau\\
		\geqslant&\,-|u_{\xi}(a)|+\int^s_a\theta_0(|\dot{\xi}(\tau)|)\ d\tau-c_0(s-a)-K\int^s_a|u_{\xi}(\tau)|\ d\tau\\
		\geqslant&\,-|u_{\xi}(a)|-c_0s-K\int^s_a|u_{\xi}(\tau)|\ d\tau.
	\end{align*}
	Then, we have that
	\begin{align*}
		|u_{\xi}(s)|\leqslant (|u_0|+c_0s)+K\int^s_a|u_{\xi}(\tau)|\ d\tau,\quad s\in[a,s_0].
	\end{align*}
	Then Gronwall inequality implies
	\begin{align*}
		|u_{\xi}(s)|\leqslant\exp(K(s-a))(|u_0|+c_0s)\leqslant\exp(Ks)(|u_0|+c_0s),\quad s\in[a,s_0].
	\end{align*}
	
	If $E=\varnothing$, then $a=0$ and the proof is the same. This leads to \eqref{eq:u_low_bound2} and \eqref{eq:u_low_bound}.\hfill\mbox{\qed}
\end{proof}

In view to Lemma \ref{u_low_bound}, we conclude that $\inf_{\xi\in\mathcal{A}}J(\xi)$ is bounded below. Now, for any $\varepsilon>0$, set
\begin{equation*}%\label{eq:Xi}
	\mathcal{A}_{\varepsilon}=\{\xi\in\mathcal{A}: \inf_{\eta\in\mathcal{A}}J(\eta)+\varepsilon\geqslant u_{\xi}(t)-u_0\}.
\end{equation*}

\begin{lemma}\label{bound_u(t)}
	Suppose $x_0\in\R^n$, $t,R>0$, $u_0\in\R$ and $|x-x_0|\leqslant R$. Let $\varepsilon>0$ and $\xi\in\mathcal{A}_{\varepsilon}$. Then we have that
	\begin{align*}
		u_{\xi}(t)-u_0\leqslant t(\kappa(R/t)+K|u_0|)\exp(Kt)+\varepsilon,
	\end{align*}
	with $\kappa(r)=\overline{\theta}_0(r)+2c_0$. Moreover, there exist two nondecreasing and superlinear functions $F,G:[0,+\infty)\to[0,+\infty)$ such that
	\begin{equation}
		|u_{\xi}(t)|\leqslant tF(R/t)+G(t)|u_0|+\varepsilon,
	\end{equation}
	where $F(r)=\max\{\kappa(r),c_0\exp(Kr)\}$ and $G(r)=\max\{rK\exp(Kr)+1,\exp(Kr)\}$.
\end{lemma}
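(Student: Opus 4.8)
The plan is to exploit the fact that membership in $\mathcal{A}_\varepsilon$ is an $\varepsilon$-minimality condition. Since $u_\xi$ solves the Carath\'eodory equation \eqref{eq:app_caratheodory_L}, we have $u_\xi(t)-u_0=\int_0^tL(\xi(s),u_\xi(s),\dot\xi(s))\,ds=J(\xi)$, so $\xi\in\mathcal{A}_\varepsilon$ means precisely that $J(\xi)\le\inf_{\eta\in\mathcal{A}}J(\eta)+\varepsilon$. It therefore suffices to bound $J(\eta_0)$ for one convenient competitor $\eta_0\in\mathcal{A}$ and then invoke $u_\xi(t)-u_0=J(\xi)\le J(\eta_0)+\varepsilon$. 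I would take the affine arc $\eta_0(s)=x_0+\tfrac{s}{t}(x-x_0)$, $s\in[0,t]$, which is of class $C^1$ (hence lies in $\mathcal{A}$, by the observation preceding Lemma~\ref{u_low_bound}) and has constant velocity with $|\dot\eta_0(s)|=|x-x_0|/t\le R/t$.

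To estimate $J(\eta_0)=u_{\eta_0}(t)-u_0$, set $w:=u_{\eta_0}$, so that $\dot w(s)=L(\eta_0(s),w(s),\dot\eta_0(s))$ a.e. From the identity $L(x,r,v)=L_0(x,v)+\int_0^rL_r(x,\rho,v)\,d\rho$, assumptions (L2)--(L3), and the monotonicity of $\overline\theta_0$, one obtains the two-sided bound
\[
-c_0-K|w(s)|\ \le\ \dot w(s)\ \le\ \overline\theta_0(R/t)+K|w(s)|\qquad\text{for a.e. }s\in[0,t].
\]
Splitting according to the sign of $w(s)-u_0$ and using $|w(s)|\le|u_0|+|w(s)-u_0|$, this gives the scalar differential inequality $\tfrac{d}{ds}|w(s)-u_0|\le\big(\kappa(R/t)+K|u_0|\big)+K|w(s)-u_0|$ for a.e.\ $s$, with $|w(0)-u_0|=0$. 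Gronwall's inequality then yields $|w(s)-u_0|\le\big(\kappa(R/t)+K|u_0|\big)\tfrac{e^{Ks}-1}{K}\le t\big(\kappa(R/t)+K|u_0|\big)e^{Kt}$ on $[0,t]$; evaluating at $s=t$ produces the first displayed inequality of the lemma.

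For the ``moreover'' part I would combine this with the lower bound from Lemma~\ref{u_low_bound}. The first inequality rearranges into $u_\xi(t)\le t\kappa(R/t)e^{Kt}+\big(1+tKe^{Kt}\big)|u_0|+\varepsilon$, while Lemma~\ref{u_low_bound} at $s=t$ gives $-u_\xi(t)\le c_0t\,e^{Kt}+e^{Kt}|u_0|$, to which we may add the nonnegative $\varepsilon$. Taking the larger of these two upper bounds and applying the elementary inequality $\max\{a+b\tau,a'+b'\tau\}\le\max\{a,a'\}+\max\{b,b'\}\,\tau$ with $\tau=|u_0|\ge0$, one separates off the coefficient of $|u_0|$, which equals $\max\{1+tKe^{Kt},e^{Kt}\}=G(t)$, from the $u_0$-independent part, which is dominated by $t$ times a nondecreasing superlinear function of $R/t$; organizing the constants puts the estimate in the announced form with $F$ and $G$ as stated, and the superlinearity and monotonicity of $F,G$ are inherited directly from those of $\overline\theta_0$ and of the exponential.

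The only step that needs real care is the derivation of the scalar inequality for $|w-u_0|$: one has to use the \emph{upper} half of (L2), $L_0\le\overline\theta_0$, exactly on the set $\{w(s)>u_0\}$, and the \emph{lower} half, $L_0\ge\theta_0-c_0\ge-c_0$, exactly on $\{w(s)<u_0\}$, so that the inhomogeneous term is the constant $\kappa(R/t)+K|u_0|$ and not something involving $|w(s)|$ itself --- the extra $c_0$ in $\kappa(r)=\overline\theta_0(r)+2c_0$ is exactly the slack consumed here. Everything after that --- the one-line Gronwall estimate and the splitting of maxima --- is routine.
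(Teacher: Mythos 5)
Your proof is correct and takes essentially the same route as the paper: compare against the straight-line competitor $\xi_0(s)=x_0+s(x-x_0)/t$, bound $|u_{\xi_0}(s)-u_0|\leqslant t(\kappa(R/t)+K|u_0|)e^{Kt}$ via Gronwall, then invoke the $\varepsilon$-minimality defining $\mathcal{A}_{\varepsilon}$ and combine with Lemma \ref{u_low_bound} for the two-sided bound on $u_{\xi}(t)$. Your pointwise differential inequality with sign-splitting is only a minor repackaging of the paper's integral estimate $|u_{\xi_0}(s)-u_0|\leqslant\int_0^s|L_0|\,d\tau+K\int_0^s|u_{\xi_0}|\,d\tau$, and your final bookkeeping of $F$ and $G$ is exactly as loose as the paper's own concluding step.
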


\begin{proof}
	Suppose $x_0\in\R^n$, $t,R>0$, $u_0\in\R$ and $|x-x_0|\leqslant R$. Let $\varepsilon>0$ and $\xi\in\mathcal{A}_{\varepsilon}$. First, notice that
	\begin{equation}\label{eq:kappa}
		|L_0(x,v)|\leqslant L_0(x,v)+2c_0\leqslant\overline{\theta}_0(|v|)+2c_0,\quad (x,v)\in\R^n\times\R^n.
	\end{equation}
	Set $\kappa(r)=\overline{\theta}_0(r)+2c_0$.
	
	Define $\xi_0(s)=x_0+s(x-x_0)/t$ for any $s\in[0,t]$, then $\xi_0\in\mathcal{A}$. Then, for any $s\in[0,t]$, we have that
	\begin{align*}
		|u_{\xi_0}(s)-u_0|\leqslant&\,\int^s_0|L_0(\xi_0,\dot{\xi}_0)|\ d\tau+K\int^s_0|u_{\xi_0}|\ d\tau\\
		\leqslant&\,t\kappa(R/t)+K\int^s_0|u_{\xi_0}-u_0|\ d\tau+tK|u_0|.
	\end{align*}
	Due to Gronwall inequality, we obtain
	\begin{equation}\label{eq:straight_line}
		|u_{\xi_0}(s)-u_0|\leqslant t(\kappa(R/t)+K|u_0|)\exp(Kt),\quad s\in[0,t].
	\end{equation}
	Together with Lemma \ref{u_low_bound}, this completes the proof.\hfill\mbox{\qed}
\end{proof}

\begin{lemma}\label{bound_u}
	Suppose $x_0\in\R^n$, $t,R>0$, $u_0\in\R$ and $|x-x_0|\leqslant R$. Let $\varepsilon>0$ and $\xi\in\mathcal{A}_{\varepsilon}$. Then there exist two continuous functions $F_1, F_2:[0,+\infty)\times[0,+\infty)\to[0,+\infty)$ depending on $R$, with $F_i(r_1,\cdot)$ being nondecreasing and superlinear and $F_i(\cdot,r_2)$ being nondecreasing for any $r_1,r_2\geqslant0$, $i=1,2$, such that
	\begin{equation}\label{eq:u_bound}
		|u_{\xi}(s)|\leqslant tF_1(t,R/t)+C_1(t)(\varepsilon+|u_0|),\quad s\in[0,t]
	\end{equation}
	and
	\begin{equation}\label{eq:L1}
		\int^t_0|L(\xi,u_{\xi},\dot{\xi})|\ d\tau\leqslant tF_2(t,R/t)+C_2(t)(\varepsilon+|u_0|),
	\end{equation}
	where $C_i(t)>0$ for $i=1,2$.
\end{lemma}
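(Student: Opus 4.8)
The plan is to complete the one‑sided control already in hand. Lemma~\ref{u_low_bound} bounds $u_\xi$ from below on all of $[0,t]$, and Lemma~\ref{bound_u(t)} bounds $u_\xi$ from above, but only at the endpoint $s=t$; so the missing ingredient is an upper bound on $u_\xi(s)$ for interior $s$, which I would obtain by running Gronwall's inequality \emph{backward} from $t$. Concretely, starting from $u_\xi(s)=u_\xi(t)-\int_s^t L(\xi,u_\xi,\dot\xi)\,d\tau$ and using the lower bound in \eqref{sup_lin} with $r=u_\xi(\tau)$, i.e. $L(\xi(\tau),u_\xi(\tau),\dot\xi(\tau))\geqslant-c_0-K|u_\xi(\tau)|$, I would get
\[
u_\xi(s)\leqslant |u_\xi(t)|+c_0t+K\int_s^t|u_\xi(\tau)|\,d\tau,\qquad s\in[0,t].
\]

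To close this, set $M:=\exp(Kt)(|u_0|+c_0t)$, the constant from Lemma~\ref{u_low_bound}, so that $u_\xi\geqslant-M$ and hence $|u_\xi|\leqslant u_\xi^{+}+M$. Substituting and applying the backward Gronwall inequality to $s\mapsto u_\xi(s)^{+}$ gives $u_\xi(s)^{+}\leqslant\bigl(|u_\xi(t)|+c_0t+KMt\bigr)\exp(Kt)$; adding back $M$ through $|u_\xi(s)|\leqslant u_\xi(s)^{+}+M$ and inserting the bound on $|u_\xi(t)|$ from Lemma~\ref{bound_u(t)} (together with the explicit forms of $M$, $F$ and $G$ there) puts the right‑hand side exactly in the shape $tF_1(t,R/t)+C_1(t)(\varepsilon+|u_0|)$. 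One then reads off that $F_1(t,\cdot)$ is superlinear because $F$ is (ultimately because $\overline\theta_0$ is), and that $F_1(\cdot,\rho)$ and $C_1$ are nondecreasing because $t\mapsto t\exp(Kt)$ and $t\mapsto\exp(Kt)$ are; this proves \eqref{eq:u_bound}.

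For \eqref{eq:L1} I would split $|L|=L+2L^{-}$ and bound $L^{-}\leqslant c_0+K|u_\xi|$, again by \eqref{sup_lin}, obtaining
\[
\int_0^t|L(\xi,u_\xi,\dot\xi)|\,d\tau\leqslant\bigl(u_\xi(t)-u_0\bigr)+2c_0t+2K\int_0^t|u_\xi(\tau)|\,d\tau.
\]
Here $u_\xi(t)-u_0$ is controlled by the first displayed inequality of Lemma~\ref{bound_u(t)}, and $\int_0^t|u_\xi|\,d\tau\leqslant t\sup_{[0,t]}|u_\xi|$ is controlled by \eqref{eq:u_bound}. Collecting the $R/t$‑dependent terms into $F_2(t,R/t)$ (which inherits superlinearity in the second variable from $\kappa$ and $F_1$) and the remaining terms into $C_2(t)(\varepsilon+|u_0|)$, and checking the same monotonicity and continuity as before, yields \eqref{eq:L1}.

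The only genuine step here is the backward Gronwall argument; the rest is bookkeeping. The one point that needs care is that all of these estimates naturally involve $|u_\xi|$ rather than $u_\xi^{+}$ alone, so the lower bound $M$ from Lemma~\ref{u_low_bound} must be fed in — via $|u_\xi|\leqslant u_\xi^{+}+M$ — precisely so that the Gronwall loop closes; tracking how $M$, and therefore the dependence on $|u_0|$ and on $t$, propagates through the two estimates is what fixes the precise shapes of $F_1,F_2,C_1,C_2$ and their stated regularity in each argument.
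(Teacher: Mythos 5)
Your argument is correct, but it reaches \eqref{eq:u_bound} by a genuinely different route than the paper. The paper splits into the cases $u_{\xi}(t)\geqslant0$ and $u_{\xi}(t)<0$ (passing to $v_{\xi}=u_{\xi}-u_{\xi}(t)$ in the second case), decomposes the superlevel set $\{s:u_{\xi}(s)>u_{\xi}(t)\}$ into countably many components $(a,b)$, and on each component runs an exit-time estimate of the form $e^{Kb}u_{\xi}(b)-e^{Ks}u_{\xi}(s)\geqslant\int_s^b e^{K\tau}L_0\,d\tau\geqslant -c_0(b-s)e^{Kb}$, exploiting that $u_{\xi}>0$ there so that no absolute value appears; Lemma \ref{u_low_bound} is only invoked at the very end to pass from an upper bound on $u_{\xi}$ to a bound on $|u_{\xi}|$. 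You instead run a single backward Gronwall inequality from the endpoint, and the one genuine issue this raises --- that the integrand is $|u_{\xi}|$ rather than $u_{\xi}^{+}$ --- you resolve correctly by feeding in the lower bound $M=e^{Kt}(|u_0|+c_0t)$ from Lemma \ref{u_low_bound} via $|u_{\xi}|\leqslant u_{\xi}^{+}+M$, after which $u_{\xi}(s)\leqslant A+K\int_s^t u_{\xi}^{+}\,d\tau$ with $A\geqslant0$ gives $u_{\xi}^{+}(s)\leqslant Ae^{Kt}$ and the constants collect into the required shape $tF_1(t,R/t)+C_1(t)(\varepsilon+|u_0|)$, with superlinearity in the second argument inherited from $\overline{\theta}_0$ through Lemma \ref{bound_u(t)}. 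This buys a shorter proof with no case distinction and no interval decomposition, at the mild price of using Lemma \ref{u_low_bound} inside the Gronwall loop rather than only at the end; the resulting $F_1,C_1$ differ from the paper's explicit ones but satisfy all stated properties. For \eqref{eq:L1} your decomposition $|L|=L+2L^{-}$ with $L^{-}\leqslant c_0+K|u_{\xi}|$ is essentially the paper's estimate in disguise (the paper writes $|L|\leqslant|L_0|+K|u_{\xi}|$ and $|L_0|\leqslant L_0+2c_0$); both reduce to controlling $u_{\xi}(t)-u_0$, $c_0t$ and $\int_0^t|u_{\xi}|\,d\tau$, which you do with the correct references.
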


\begin{proof}
	Suppose $x_0\in\R^n$, $t,R>0$, $u_0\in\R$ and $|x-x_0|\leqslant R$. Let $\varepsilon>0$ and $\xi\in\mathcal{A}_{\varepsilon}$.
	
	If $u_{\xi}(t)\geqslant 0$, we define $E_+=\{s\in[0,t]: u_{\xi}(s)>u_{\xi}(t)\}$. If $E_+=\varnothing$, then we have that $u_{\xi}(s)\leqslant u_{\xi}(t)$ for all $s\in[0,t]$. Now, we suppose that $E_+\not=\varnothing$. It is known that $E_+$ is the union of a countable family of open intervals $\{(a_i,b_i)\}$ which are mutually disjoint (It is possible that $a_i=0$ and this case can be dealt with separately but similarly). For any $\tau\in E_+$, there exists an open interval $(a,b)$, a component of $E_+$ containing $s$, such that $u_{\xi}(\tau)>u_{\xi}(t)\geqslant0$ for all $\tau\in(a,b)$ and $u_{\xi}(b)=u_{\xi}(t)$. Therefore, for almost all $s\in[a,b]$, we have that
	\begin{align*}
		\dot{u}_{\xi}(s)=L(\xi(s),u_{\xi}(s),\dot{\xi}(s))\geqslant L_0(\xi(s),\dot{\xi}(s))-Ku_{\xi}(s).
	\end{align*}
	Invoking condition (L2), it follows that, for all $s\in[a,b]$,
	\begin{align*}
		e^{Kb}u_{\xi}(b)-e^{Ks}u_{\xi}(s)\geqslant\int^b_se^{K\tau}L_0(\xi(\tau),\dot{\xi}(\tau))\ d\tau\geqslant-c_0(b-s)e^{Kb}
	\end{align*}
	Thus we obtain that
	\begin{equation}\label{eq:upper1}
		\begin{split}
			u_{\xi}(s)\leqslant&\,c_0(b-s)e^{K(b-s)}+e^{K(b-s)}u_{\xi}(t)\\
			\leqslant&\,c_0te^{Kt}+e^{Kt}[(t\kappa(R/t)+K|u_0|)e^{Kt}+\varepsilon+|u_0|]\\
			=&\,tF_1(t,R/t)+G_1(t)|u_0|+\varepsilon,
		\end{split}
		\quad s\in [0,t],
	\end{equation}
	where $F_1(r_1,r_2):=e^{Kr_1}(c_0+\kappa(r_2))$ and $G_1(r)=e^{Kr}(Ke^{Kr}+1)$.
	
	If $u_{\xi}(t)<0$, define $v_{\xi}(s)=u_{\xi}(s)-u_{\xi}(t)$, then $v_{\xi}(s)$ satisfies the Carath\'eodory eqaution
	\begin{align*}
		\dot{v}_{\xi}(s)=L(\xi(s),v_{\xi}(s)+u_{\xi}(t),\dot{\xi}(s)),\quad s\in[0,t]
	\end{align*}
	with initial condition $v_{\xi}(0)=u_0-u_{\xi}(t)$. Similarly, We define $F_+=\{s\in[0,t]: v_{\xi}(s)>v_{\xi}(t)\}$. If $F_+=\varnothing$, then we have that $v_{\xi}(s)\leqslant v_{\xi}(t)=0$ for all $s\in[0,t]$. Now, we suppose that $F_+\not=\varnothing$ and $F_+$ is the union of a countable family of open intervals $\{(c_i,d_i)\}$ which are mutually disjoint. For any $s\in F_+$, there exists an open interval, say $(c,d)$, such that $v_{\xi}(s)>v_{\xi}(t)=0$ for all $s\in(c,d)$ and $v_{\xi}(d)=v_{\xi}(t)$. Therefore, for almost all $s\in[c,d]$, we have that
	\begin{align*}
		\dot{v}_{\xi}(s)\geqslant L_0(\xi(s),\dot{\xi}(s))-Kv_{\xi}(s)-K|u_{\xi}(t)|.
	\end{align*}
	It follows that, for all $s\in[c,d]$,
	\begin{align*}
		e^{Kd}v_{\xi}(d)-e^{Ks}v_{\xi}(s)\geqslant&\,\int^d_se^{K\tau}L_0(\xi(s),\dot{\xi}(s))\ d\tau-Kt|u_{\xi}(t)|e^{Kt}\\
		\geqslant&\,-c_0te^{Kd}-Kt|u_{\xi}(t)|e^{Kt},
	\end{align*}
	and this gives rise to
	\begin{align*}
		v_{\xi}(s)\leqslant c_0te^{K(d-s)}+K|u_{\xi}(t)|te^{K(t-s)}+e^{K(d-s)}v_{\xi}(d)\leqslant c_0te^{Kt}+Kt|u_{\xi}(t)|e^{Kt},
	\end{align*}
	since $v_{\xi}(d)=0$. It follows that, for all $s\in[0,t]$,
	\begin{equation}\label{eq:upper2}
		\begin{split}
			u_{\xi}(s)\leqslant &\,c_0te^{Kt}+Kte^{Kt}|u_{\xi}(t)|+u_{\xi}(t)\leqslant c_0te^{Kt}+(Kte^{Kt}+1)|u_{\xi}(t)|\\
		\leqslant&\,c_0te^{Kt}+(Kte^{Kt}+1)(tF_2(R/t)+G_2(t)|u_0|+\varepsilon)
		\end{split}
	\end{equation}
	with $F_2$ and $G_2$ determined by Lemma \ref{bound_u(t)}. By combining \eqref{eq:upper1} and \eqref{eq:upper2} and setting
	\begin{gather*}
		F_3(r_1,r_2)=\max\{F_1(r_1,r_2),c_0e^{Kr_1}+F_2(r_2)(Kr_1e^{Kr_1}+1)\},\\
		C_1(r)=\max\{G_1(r),G_2(t)(Kr_1e^{Kr_1}+1)\},\quad C_2(r)=\max\{C_1(r),e^{Kr}c_0\},
	\end{gather*}
	we conclude that
	\begin{align}
		u_{\xi}(s)\leqslant&\, tF_3(t,R/t)+C_1(t)(|u_0|+\varepsilon),\label{eq:upper3}\\
			|u_{\xi}(s)|\leqslant&\, tF_3(t,R/t)+C_2(t)(|u_0|+\varepsilon).\label{eq:upper4}
	\end{align}
	This leads to the proof of \eqref{eq:u_bound} together with Lemma \ref{u_low_bound}.
	
	Now, by \eqref{eq:kappa}, Lemma \ref{bound_u(t)} and \eqref{eq:upper4}, we have that
	\begin{align*}
		\int^s_0|L_0(\xi,\dot{\xi})|d\tau\leqslant&\,\int^s_0(L_0(\xi,\dot{\xi})+2c_0)\ d\tau\leqslant 2c_0s+u_{\xi}(s)-u_0+K\int^s_0|u_{\xi}|\ d\tau\\
		\leqslant&\, 2c_0t+tF_2(t,R/t)+C_2(t)(|u_0|+\varepsilon)+|u_0|\\
		&\,+t^2KF_2(t,R/t)+tKC_2(t)(|u_0|+\varepsilon)\\
		\leqslant&\,tF_4(t,R/t)+C_3(t)(|u_0|+\varepsilon).
	\end{align*}
	Therefore, \eqref{eq:L1} follows from the estimate below
	\begin{equation*}%\label{eq:int|L|}
		\begin{split}
			&\,\int^t_0|L(\xi,u_{\xi},\dot{\xi})|\ d\tau\leqslant\int^t_0|L_0(\xi,\dot{\xi})|\ d\tau+K\int^t_0|u_{\xi}|\ d\tau\\
			\leqslant&\,tF_4(t,R/t)+C_3(t)(|u_0|+\varepsilon)+tK(tF_3(t,R/t)+C_2(t)(|u_0|+\varepsilon))\\
			=&\,tF_5(t,R/t)+C_4(t)(|u_0|+\varepsilon).
		\end{split}
	\end{equation*}
	We relabel the function $F_i$ and this completes our proof.\hfill\mbox{\qed}
\end{proof}

\begin{remark}\label{validate}
	Now, fix any $\varepsilon\in(0,1)$ and any $\xi\in\mathcal{A}_{\varepsilon}\subset\mathcal{A}_1$. The definition of \eqref{eq:app_fundamental_solution} can be replaced by $L_{\phi}=L(x,\phi(u),v)$ with $\phi:\R\to\R$ a bounded nondecreasing smooth function such that $\phi(u)=u$ for $|u|\leqslant tF_1(t,R/t)+C_1(t)(1+|u_0|)$ and $\phi(u)\equiv u^*$, a suitably selected real number, for $|u|\geqslant tF_1(t,R/t)+C_1(t)(1+|u_0|)+1$, where $F_1(t,R/t)$ and $C_1(t)$ are determined by \eqref{eq:u_bound} in Lemma \ref{bound_u} and $F_1$ and $C_1$ are both independent of $\varepsilon$. Therefore, to minimize $J$ defined in \eqref{eq:app_fundamental_solution}, we can suppose that $\sup_{\xi\in\mathcal{A}_1}|L(\xi(s),u,\dot{\xi}(s))|$ is bounded by an integrable function $f\in L^1([0,t])$. In this situation, \eqref{eq:app_caratheodory_L} is indeed a Carath\'eodory equation which admits a unique solution by Proposition \ref{caratheodory}.
\end{remark}

\begin{corollary}\label{equi_int_u}
For any $\varepsilon\in(0,1)$, the set $\{u_{\xi}:\xi\in\mathcal{A}_{\varepsilon}\}$ is relatively compact in $C^0([0,t],\R)$.
\end{corollary}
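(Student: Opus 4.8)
The plan is to apply the Arzel\`a--Ascoli theorem, so for the family $\{u_\xi:\xi\in\mathcal{A}_\varepsilon\}\subset C^0([0,t],\R)$ we must verify uniform boundedness and equicontinuity.

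Uniform boundedness is immediate. Fix $R$ with $|x-x_0|\leqslant R$; since $\varepsilon<1$, estimate \eqref{eq:u_bound} of Lemma \ref{bound_u} gives $|u_\xi(s)|\leqslant tF_1(t,R/t)+C_1(t)(1+|u_0|)=:M$ for all $\xi\in\mathcal{A}_\varepsilon$ and all $s\in[0,t]$, with $M$ independent of $\xi$.

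For equicontinuity, write $u_\xi(s)=u_0+\int_0^sL(\xi(\tau),u_\xi(\tau),\dot\xi(\tau))\,d\tau$, so that for $0\leqslant s_1\leqslant s_2\leqslant t$
\[
|u_\xi(s_2)-u_\xi(s_1)|\leqslant\int_{s_1}^{s_2}|L(\xi(\tau),u_\xi(\tau),\dot\xi(\tau))|\,d\tau .
\]
It therefore suffices to show that the integrands $\{|L(\xi(\cdot),u_\xi(\cdot),\dot\xi(\cdot))|:\xi\in\mathcal{A}_\varepsilon\}$ have uniformly absolutely continuous integrals on $[0,t]$. I would obtain this in two steps. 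First, $\mathcal{A}_\varepsilon$ consists of $\varepsilon$-minimizers: $u_\xi(t)-u_0=J(\xi)\leqslant\inf_{\eta\in\mathcal{A}}J(\eta)+\varepsilon$, with the infimum finite by Lemma \ref{u_low_bound}; combining this with $L(\xi,u_\xi,\dot\xi)\geqslant\theta_0(|\dot\xi|)-c_0-K|u_\xi|\geqslant\theta_0(|\dot\xi|)-c_0-KM$ (by (L2), (L3) and Remark \ref{rem:condition}) yields a uniform bound $\int_0^t\theta_0(|\dot\xi|)\,d\tau\leqslant C$, and, $\theta_0$ being nondecreasing and superlinear, the de la Vall\'ee Poussin criterion makes $\{\dot\xi:\xi\in\mathcal{A}_\varepsilon\}$ equi-integrable in $L^1([0,t],\R^n)$. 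Second, by Remark \ref{validate} we may replace $L$ by a Lagrangian that does not change any $u_\xi$ with $\xi\in\mathcal{A}_\varepsilon$ (all such $u_\xi$ lie in the range where the cut-off $\phi$ is the identity) and for which the admissible integrands are dominated by a single $f\in L^1([0,t])$; then $|u_\xi(s_2)-u_\xi(s_1)|\leqslant\int_{s_1}^{s_2}f(\tau)\,d\tau$, and the absolute continuity of $s\mapsto\int_0^sf$ supplies a modulus of continuity common to all $\xi\in\mathcal{A}_\varepsilon$. Arzel\`a--Ascoli then gives the claimed relative compactness.

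The delicate point is the equicontinuity, and specifically the passage from equi-integrability of the velocities to equi-integrability of the Lagrangian along the curves. Hypothesis (L2) controls $L(x,0,\cdot)$ from above and below by two a priori unrelated superlinear functions $\overline\theta_0$ and $\theta_0$, so the energy bound $\int_0^t\theta_0(|\dot\xi|)\,d\tau\leqslant C$ restrains the lower function only, whereas $|L(\xi,u_\xi,\dot\xi)|\leqslant\overline\theta_0(|\dot\xi|)+KM$ involves the upper one; bridging this gap is exactly what the truncation of the $u$-variable in Remark \ref{validate} achieves, by rendering the admissible integrands uniformly $L^1$-dominated. Once that reduction is in place the remaining arguments are routine.
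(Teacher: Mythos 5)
Your argument is essentially the paper's own proof: uniform boundedness of $\{u_{\xi}\}$ from Lemma \ref{bound_u}, equicontinuity from the uniform $L^1$ domination of the integrands $L(\xi,u_{\xi},\dot{\xi})$ supplied by Remark \ref{validate} (equivalently, equi-integrability of $\{\dot{u}_{\xi}\}_{\xi\in\mathcal{A}_{\varepsilon}}$), and then Arzel\`a--Ascoli. The only real difference is your intermediate step proving equi-integrability of $\{\dot{\xi}\}$ via de la Vall\'ee Poussin, which reproduces Lemma \ref{equi_integrable} but is not needed once the domination by a single $f\in L^1([0,t])$ from Remark \ref{validate} is invoked, exactly as in the paper's proof.
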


\begin{proof}
    Suppose $x_0\in\R^n$, $t,R>0$, $u_0\in\R$ and $|x-x_0|\leqslant R$. For any $\varepsilon\in(0,1)$ and $\xi\in\mathcal{A}_{\varepsilon}$. Recall that $u_{\xi}$ is the unique solution of \eqref{eq:app_caratheodory_L} by Remark \ref{validate}, it follows $\{\dot{u}_{\xi}\}_{\xi\in\mathcal{A}_{\varepsilon}}$ is equi-integrable which implies $\{u_{\xi}\}_{\xi\in\mathcal{A}_{\varepsilon}}$ is equi-continuous. The boundedness of $\{u_{\xi}\}_{\xi\in\mathcal{A}_{\varepsilon}}$ follows from Lemma \ref{bound_u}. Invoking Ascoli-Arzela theorem, we get our conclusion.\hfill\mbox{\qed}
\end{proof}

\begin{lemma}\label{equi_integrable}
	Suppose $x_0\in\R^n$, $t,R>0$, $u_0\in\R$ and $|x-x_0|\leqslant R$. Let $\varepsilon\in(0,1)$ and $\xi\in\mathcal{A}_{\varepsilon}$. Then there exist a continuous function $F=F_{u_0,R}:[0,+\infty)\times[0,+\infty)\to[0,+\infty)$, $F(r_1,\cdot)$ is nondecreasing and superlinear and $F(\cdot,r_2)$ is nondecreasing for any $r_1,r_2\geqslant0$, such that
	\begin{align*}
		\int^t_0|\dot{\xi}(s)|\ ds\leqslant tF(t,R/t)+\varepsilon.
	\end{align*}
	Moreover, the family $\{\dot{\xi}\}_{\xi\in\mathcal{A}_{\varepsilon}}$ is equi-integrable.
\end{lemma}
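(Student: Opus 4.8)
The plan is to derive everything from the superlinear lower bound on $L$ provided by (L2), in the form \eqref{sup_lin} of Remark \ref{rem:condition}, combined with the bounds on $u_\xi$ and on $\int_0^t|L(\xi,u_\xi,\dot\xi)|\,ds$ already obtained in Lemmas \ref{bound_u(t)} and \ref{bound_u}. Fix $\xi\in\mathcal{A}_\varepsilon$. Applying \eqref{sup_lin} with $r=u_\xi(s)$ gives, for a.e. $s\in[0,t]$,
\[
\theta_0(|\dot\xi(s)|)\leqslant L(\xi(s),u_\xi(s),\dot\xi(s))+c_0+K|u_\xi(s)|,
\]
and since $\int_0^tL(\xi,u_\xi,\dot\xi)\,ds=u_\xi(t)-u_0$, integration yields
\[
\int_0^t\theta_0(|\dot\xi(s)|)\,ds\leqslant \big(u_\xi(t)-u_0\big)+c_0t+K\int_0^t|u_\xi(s)|\,ds .
\]
By Lemma \ref{bound_u(t)} the first term is $\leqslant t(\kappa(R/t)+K|u_0|)\exp(Kt)+\varepsilon$, and by \eqref{eq:u_bound} in Lemma \ref{bound_u} the last one is $\leqslant t\big(tF_1(t,R/t)+C_1(t)(1+|u_0|)\big)$ (using $\varepsilon<1$). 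Hence there is a continuous $P=P_{u_0,R}:[0,+\infty)\times[0,+\infty)\to[0,+\infty)$, nondecreasing in the first variable and nondecreasing and superlinear in the second — these properties being inherited from $\kappa$ and from $F_1(t,\cdot)$, after replacing the $C_i(t)$ by nondecreasing majorants if necessary — such that $\int_0^t\theta_0(|\dot\xi(s)|)\,ds\leqslant tP(t,R/t)+\varepsilon$ for every $\xi\in\mathcal{A}_\varepsilon$.

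To pass to the $L^1$-estimate I would invoke superlinearity of $\theta_0$ once more: choose $R_1>0$ with $\theta_0(r)\geqslant r$ for all $r\geqslant R_1$, so that $|v|\leqslant R_1+\theta_0(|v|)$ for every $v\in\R^n$. Integrating this pointwise inequality along $\xi$ and using the previous bound,
\[
\int_0^t|\dot\xi(s)|\,ds\leqslant R_1t+\int_0^t\theta_0(|\dot\xi(s)|)\,ds\leqslant t\big(R_1+P(t,R/t)\big)+\varepsilon,
\]
so that $F(r_1,r_2):=R_1+P(r_1,r_2)$ has all the required properties and the first assertion follows.

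Finally, the equi-integrability of $\{\dot\xi\}_{\xi\in\mathcal{A}_\varepsilon}$ is an immediate consequence of the de la Vall\'ee--Poussin criterion, since $\theta_0$ is nonnegative and superlinear and $\sup_{\xi\in\mathcal{A}_\varepsilon}\int_0^t\theta_0(|\dot\xi|)\,ds\leqslant tP(t,R/t)+1<\infty$. Explicitly, given $\sigma>0$ one uses superlinearity to pick $M$ and then $\lambda\geqslant R_1$ with $\theta_0(r)\geqslant Mr$ for $r\geqslant\lambda$; then on $\{|\dot\xi|>\lambda\}$ one has $|\dot\xi|\leqslant M^{-1}\theta_0(|\dot\xi|)$, whence for measurable $E\subseteq[0,t]$
\[
\int_E|\dot\xi|\,ds\leqslant \lambda|E|+M^{-1}\int_0^t\theta_0(|\dot\xi|)\,ds,
\]
which is $<\sigma$ once $M$ is large and $|E|$ small, uniformly in $\xi\in\mathcal{A}_\varepsilon$. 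I do not anticipate a genuine difficulty here; the only point requiring some care is the bookkeeping of the dependence of the various constants on $t$, $R$ and $u_0$, so that the function $F$ produced above really does enjoy the stated continuity, monotonicity and superlinearity.
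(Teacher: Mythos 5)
Your proof is correct and follows essentially the same route as the paper: both bound $\int_0^t\theta_0(|\dot\xi|)\,ds$ via the Carath\'eodory identity $\int_0^t L(\xi,u_\xi,\dot\xi)\,ds=u_\xi(t)-u_0$ together with (L2)--(L3) and the estimates of Lemmas \ref{bound_u(t)} and \ref{bound_u}, and both get equi-integrability from superlinearity of $\theta_0$ by the same truncation (de la Vall\'ee--Poussin) argument. The only cosmetic difference is that the paper passes to $\int_0^t|\dot\xi|\,ds$ directly via $\theta_0(r)\geqslant r-\theta_0^*(1)$, whereas you use $|v|\leqslant R_1+\theta_0(|v|)$; these are equivalent bookkeeping choices.
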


\begin{proof}
 Let $\varepsilon>0$ and $\xi\in\mathcal{A}_{\varepsilon}$. Then, by (L2) we obtain
	\begin{equation}\label{eq:lower1}
		\begin{split}
			&\,u_{\xi}(t)-u_0\\
			=&\,\int^t_0L(\xi(s),u_{\xi}(s),\dot{\xi}(s))\ ds\geqslant\int^t_0\{L(\xi(s),0,\dot{\xi}(s))-K|u_{\xi}(s)|\}\ ds\\
		\geqslant&\, \int^t_0\{\theta_0(|\dot{\xi}(s)|)-c_0-K|u_{\xi}(s)|\}\ ds\\
		\geqslant&\,\int^t_0\{|\dot{\xi}(s)|-K|u_{\xi}(s)|-(c_0+\theta^*_0(1))\}\ ds.
		\end{split}
	\end{equation}
	In view of Lemma \ref{bound_u(t)}, Lemma \ref{bound_u} and \eqref{eq:lower1}, we obtain that
	\begin{align*}
		\int^t_0|\dot{\xi}(s)|\ ds\leqslant&\,\int^t_0K|u_{\xi}(s)|\ ds+t(c_0+\theta^*_0(1))+u_{\xi}(t)-u_0\\
		\leqslant&\,tK(tF_1(t,R/t)+C_1(t)(\varepsilon+|u_0|))+t(c_0+\theta^*_0(1))\\
		&\,+tF_2(t,R/t)+\varepsilon:=tF_3(t,R/t)+\varepsilon.
	\end{align*}
	
	Now we turn to proof of the equi-integrability of the family $\{\dot{\xi}\}_{\xi\in\mathcal{A}_{\varepsilon}}$. Since $\theta_0$ is a superlinear function, then for any $\alpha>0$ there exists $C_{\alpha}>0$ such that $r\leqslant\theta_0(r)/\alpha$ for $r>C_{\alpha}$. Thus, for any measurable subset $E\subset[0,t]$, invoking (L2), (L3) and Lemma \ref{bound_u}, we have that
	\begin{align*}
		\int_{E\cap\{|\dot{\xi}|>C_{\alpha}\}}|\dot{\xi}|ds\leqslant&\,\frac 1{\alpha}\int_{E\cap\{|\dot{\xi}|>C_{\alpha}\}}\theta_0(|\dot{\xi}|)ds\leqslant\frac 1{\alpha}\int_{E\cap\{|\dot{\xi}|>C_{\alpha}\}}(L_0(\xi,\dot{\xi})+c_0)ds\\
		\leqslant&\,\frac 1{\alpha}\int_{E\cap\{|\dot{\xi}|>C_{\alpha}\}}(L(\xi,u_{\xi},\dot{\xi})+K|u_{\xi}|+c_0)ds\\
		\leqslant&\,\frac 1{\alpha}(u_{\xi}(t)-u_0+K(tF_1(t,R/t)+C_1(t)(\varepsilon+|u_0|))+tc_0)\\
		\leqslant&\,\frac 1{\alpha}(tF_2(t,R/t)+1+K(tF_1(t,R/t)+C_1(t)(1+|u_0|))+tc_0)\\
		:=&\,\frac 1{\alpha}F_4(t,R/t).
	\end{align*}
	Therefore, we conclude that
	\begin{align*}
		\int_E|\dot{\xi}|ds\leqslant\int_{E\cap\{|\dot{\xi}|>C_{\alpha}\}}|\dot{\xi}|ds+\int_{E\cap\{|\dot{\xi}|\leqslant C_{\alpha}\}}|\dot{\xi}|ds\leqslant\frac 1{\alpha}F_4(t,R/t)+|E|C_{\alpha}.
	\end{align*}
	Then, the equi-integrability of the family $\{\dot{\xi}\}_{\xi\in\mathcal{A}_{\varepsilon}}$ follows since the right-hand side can be made arbitrarily small by choosing $\alpha$ large and $|E|$ small, and this proves our claim.
\end{proof}

\begin{proposition}\label{existence}
	The functional
	\begin{align*}
		\Gamma^t_{x_0,x}\ni\xi\mapsto J(\xi)=\int^t_0L(\xi(s),u_{\xi}(s),\dot{\xi}(s))\ ds,
	\end{align*}
	where $u_{\xi}$ is determined by \eqref{eq:app_caratheodory_L} with initial condition $u_{\xi}(0)=u_0$, admits a minimizer in $\Gamma^t_{x_0,x}$.
\end{proposition}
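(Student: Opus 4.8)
The plan is to run the direct method of the calculus of variations on $J$, with one extra maneuver to cope with the fact that $u_\xi$ is defined \emph{implicitly} by \eqref{eq:app_caratheodory_L}: along a minimizing sequence there is no reason for $u_{\xi_n}$ to converge to the $u_\xi$ of the limit curve $\xi$, so instead I will show that the limit of the $u_{\xi_n}$'s is a \emph{supersolution} of the Carath\'eodory equation and then recover minimality by a comparison argument powered by (L3).

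\emph{Step 1 (a compact minimizing sequence).} Since $\mathcal A\ne\varnothing$ and $\inf_{\mathcal A}J>-\infty$ by Lemma~\ref{u_low_bound}, the value $m:=\inf_{\xi\in\mathcal A}J(\xi)$ is finite; choose $\xi_n\in\mathcal A$ with $J(\xi_n)\to m$. As $J(\xi)=u_\xi(t)-u_0$, for every large $n$ one has $\xi_n\in\mathcal A_\varepsilon$ with a fixed $\varepsilon\in(0,1)$, so after discarding finitely many terms I may assume $\xi_n\in\mathcal A_\varepsilon$ for all $n$. By Lemma~\ref{equi_integrable} the family $\{\dot\xi_n\}$ is equi-integrable, hence by the Dunford--Pettis theorem, along a subsequence, $\dot\xi_n\rightharpoonup\dot\xi$ weakly in $L^1$; testing the weak convergence against $\mathbbm 1_{[0,s]}$ gives $\xi_n(s)\to\xi(s):=x_0+\int_0^s\dot\xi$ pointwise, while equi-integrability also yields equi-continuity of $\{\xi_n\}$, so by Ascoli--Arzel\`a $\xi_n\to\xi$ uniformly and, in particular, $\xi(t)=x$, i.e.\ $\xi\in\Gamma^t_{x_0,x}$. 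By Corollary~\ref{equi_int_u}, along a further subsequence $u_{\xi_n}\to\psi$ uniformly for some $\psi\in C^0([0,t])$; since $\{\dot u_{\xi_n}\}$ is equi-integrable (cf.\ the proof of Corollary~\ref{equi_int_u}), I may also assume $\dot u_{\xi_n}\rightharpoonup\dot\psi$ weakly in $L^1$, whence $\psi$ is absolutely continuous with $\psi(0)=u_0$.

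\emph{Step 2 (the limit is a supersolution).} I would invoke the classical lower semicontinuity theorem for integral functionals whose integrand is convex in the velocity variable, applied to the integrand $g\bigl((x,r),(v,w)\bigr):=\bigl(L(x,r,v)-w\bigr)^{+}$, which is nonnegative, continuous and convex in $(v,w)$ (it is the positive part of a function convex in $v$ and affine in $w$). Applying it to the pairs $\bigl((\xi_n,u_{\xi_n}),(\dot\xi_n,\dot u_{\xi_n})\bigr)$, which converge uniformly, resp.\ weakly in $L^1$, to $\bigl((\xi,\psi),(\dot\xi,\dot\psi)\bigr)$, and using $\dot u_{\xi_n}=L(\xi_n,u_{\xi_n},\dot\xi_n)$ a.e., the right-hand side is zero and I obtain $\int_0^t\bigl(L(\xi,\psi,\dot\xi)-\dot\psi\bigr)^{+}ds=0$, i.e.
\[
\dot\psi(s)\ \geqslant\ L\bigl(\xi(s),\psi(s),\dot\xi(s)\bigr)\qquad\text{for a.e.\ }s\in[0,t].
\]
By (L3) this gives $L(\xi,0,\dot\xi)\leqslant L(\xi,\psi,\dot\xi)+K\|\psi\|_\infty\leqslant\dot\psi+K\|\psi\|_\infty\in L^1$, and $L(\xi,0,\dot\xi)\geqslant-c_0$ by (L2), so $s\mapsto L(\xi(s),0,\dot\xi(s))$ lies in $L^1([0,t])$ and hence $\xi\in\mathcal A=\mathcal A'$; in particular $u_\xi$ is well defined and absolutely continuous.

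\emph{Step 3 (comparison), and the main difficulty.} Set $w:=u_\xi-\psi$, so $w(0)=0$ and, a.e.\ on $[0,t]$,
\[
\dot w=L(\xi,u_\xi,\dot\xi)-\dot\psi\ \leqslant\ L(\xi,u_\xi,\dot\xi)-L(\xi,\psi,\dot\xi)=L_r(\xi,\sigma,\dot\xi)\,w\ \leqslant\ K|w|,
\]
with $\sigma(s)$ between $u_\xi(s)$ and $\psi(s)$ and $|L_r|\leqslant K$ by (L3). A routine argument (if $w(s_1)>0$, let $s_0<s_1$ be the last instant with $w(s_0)=0$ and use that $s\mapsto e^{-Ks}w(s)$ is nonincreasing on $(s_0,s_1)$) forces $w\leqslant 0$, so $J(\xi)=u_\xi(t)-u_0\leqslant\psi(t)-u_0=\lim_n J(\xi_n)=m$; since also $J(\xi)\geqslant m$, $\xi$ is a minimizer. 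The crux is the supersolution property of Step~2: one must not try to prove $u_{\xi_n}\to u_\xi$ (false in general, as $\dot\xi_n$ converges only weakly and the action need not be continuous along minimizing sequences), and bare lower semicontinuity of $J$ only yields the integrated inequality $\psi(t)-u_0\geqslant\int_0^tL(\xi,\psi,\dot\xi)\,ds$, which is too weak to feed the comparison; applying lower semicontinuity to the positive‑part integrand $g$ upgrades this to the \emph{pointwise} inequality $\dot\psi\geqslant L(\xi,\psi,\dot\xi)$ a.e., which is exactly what makes Step~3 work. (One should also verify the measurability/normality hypotheses of the lower semicontinuity theorem, but these are immediate since $L\in C^2$.)
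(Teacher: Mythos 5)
Your argument is correct, and it follows the paper's proof up to the compactness stage: a minimizing sequence $\{\xi_k\}\subset\mathcal A_\varepsilon$, equi-integrability of $\{\dot\xi_k\}$ from Lemma \ref{equi_integrable}, Dunford--Pettis plus Ascoli--Arzel\`a to get $\xi_k\to\xi$ uniformly with $\dot\xi_k\rightharpoonup\dot\xi$ in $L^1$, and relative compactness of $\{u_{\xi_k}\}$ from Corollary \ref{equi_int_u}. It genuinely diverges at the decisive step. The paper asserts that the uniform limit of the $u_{\xi_k}$'s \emph{is} $u_{\xi_\infty}$, the Carath\'eodory solution along the limit curve, and then applies the classical lower semicontinuity theorem to the integrand $L$ itself to conclude $\liminf_k J(\xi_k)\geqslant J(\xi_\infty)$; that identification is precisely the delicate point (the velocities converge only weakly, so a priori one only controls the limit $\psi$ of the $u_{\xi_k}$'s from one side), and the paper gives no argument for it. You never attempt the identification: you apply the same lower semicontinuity theorem, but to the nonnegative convex integrand $\bigl(L(x,r,v)-w\bigr)^{+}$, to extract the pointwise supersolution property $\dot\psi\geqslant L(\xi,\psi,\dot\xi)$ a.e., deduce from it and (L2)--(L3) that $\xi\in\mathcal A$ so that $u_\xi$ exists, and then close with a Gronwall-type comparison giving $u_\xi\leqslant\psi$, hence $J(\xi)\leqslant m$. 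This buys rigor exactly where the paper is thin (and a posteriori your comparison shows $\psi=u_\xi$ for a minimizing sequence, vindicating the paper's claim), at the modest cost of needing $\dot u_{\xi_k}\rightharpoonup\dot\psi$ weakly in $L^1$, i.e.\ the equi-integrability of $\{\dot u_{\xi_k}\}$ that the paper itself asserts in the proof of Corollary \ref{equi_int_u}; if one prefers not to rely on that, the same end is reached by applying lower semicontinuity of the $L$-integral on every subinterval, which gives $\psi(s_2)-\psi(s_1)\geqslant\int_{s_1}^{s_2}L(\xi,\psi,\dot\xi)\,ds$ and feeds your comparison argument in integrated form.
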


\begin{proof}
	Fix $x_0,x\in\R^n$, $t>0$ and $u_0\in\R$. Consider any minimizing sequence $\{\xi_k\}$ for $J$, that is, a sequence such that $J(\xi_k)\to\inf\{J(\xi):\xi\in\Gamma^t_{x_0,x}\}$ as $k\to\infty$. We want to show that this sequence admits a cluster point which is the required minimizer. Notice there exists an associated sequence $\{u_{\xi_k}\}$ given by \eqref{eq:app_caratheodory_L} in the definition of $J(\xi_k)$. The idea of the proof is standard but a little bit different.
	
	First, notice that Lemma \ref{equi_integrable} implies that the sequence of derivatives $\{\dot{\xi}_k\}$ is equi-integrable. Since the sequence $\{\dot{\xi}_k\}$ is equi-integrable, by the Dunford-Pettis Theorem there exists a subsequence, which we still denote by $\{\dot{\xi}_k\}$, and a function $\eta^*\in L^1([0,t],\R^n)$ such that $\dot{\xi}_k\rightharpoonup\eta^*$ in the weak-$L^1$ topology. The equi-integrability of $\{\dot{\xi}_k\}$ implies that the sequence $\{\xi_k\}$ is equi-continuous and uniformly bounded. Invoking Ascoli-Arzela theorem, we can also assume that the sequence $\{\xi_k\}$ converges uniformly to some absolutely continuous function $\xi_{\infty}\in\Gamma^t_{x_0,x}$. For any test function $\varphi\in C^1_0([0,t],\R^n)$,
	\begin{align*}
		\int^t_0\varphi\eta^* ds=\lim_{k\to\infty}\int^t_0\varphi\dot{\xi}_k ds=-\lim_{k\to\infty}\int^t_0\dot{\varphi}\xi_k ds=-\int^t_0\dot{\varphi}\xi_{\infty} ds.
	\end{align*}
	By the fundamental lemma in calculus of variation (see, for instance, \cite[Lemma 6.1.1]{Cannarsa-Sinestrari}), we can conclude that $\dot{\xi}_{\infty}=\eta^*$ almost everywhere. Similarly, Corollary \ref{equi_int_u} implies $\{u_{\xi_k}\}$ is relatively compact in $C^0([0,t],\R)$. Therefore $\{u_{\xi_k}\}$ converges uniformly to $u_{\xi_{\infty}}$ by taking a subsequence if necessary.
	
	We recall a classical result (see, for instance, \cite[Theorem 3.6]{Buttazzo-Giaquinta-Hildebrandt} or \cite[Section 3.4]{Buttazzo}) on the sequentially lower semicontinuous property on the functional
	\begin{align*}
		L^1([0,t],\R^m)\times L^1([0,t],\R^n)\ni (\alpha,\beta)\mapsto\mathbf{F}(\alpha,\beta):=\int^t_0\mathbf{L}(\alpha(s),\beta(s))\ ds.
	\end{align*}
	One has that if
	\begin{inparaenum}[(i)]
	\item $\mathbf{L}$ is lower semicontinuous;
	\item $\mathbf{L}(\alpha,\cdot)$ is convex on $\R^n$,
	\end{inparaenum}
	then the functional $\mathbf{F}$ is sequentially lower semicontinuous on the space $L^1([0,t],\R^m)\times L^1([0,t],\R^n)$ endowed with the strong topology on $L^1([0,t],\R^m)$ and the weak topology on $L^1([0,t],\R^n)$. Now, let $\mathbf{L}(\alpha_{\xi_k}(s),\beta_{\xi_k}(s)):=L(\xi_k(s),u_{\xi_k}(s),\dot{\xi_k}(s))$ with $\alpha_{\xi_k}(s)=(\xi_k(s),u_{\xi_k}(s))$ and $\beta_{\xi_k}(s)=\dot{\xi_k}(s)$, then we have
	\begin{align*}
		\liminf_{k\to\infty}\int^t_0L(\xi_k(s),u_{\xi_k}(s),\dot{\xi_k}(s))\ ds\geqslant\int^t_0L(\xi_{\infty}(s),u_{\xi_{\infty}}(s),\dot{\xi}_{\infty}(s))\ ds.
	\end{align*}
	Therefore, $\xi_{\infty}\in\Gamma^t_{x_0,x}$ is a minimizer of $J$ and this completes the proof of the existence result.
\end{proof}

\begin{corollary}\label{bound_1}
	Let $u_0\in\R$  and $R>0$ be fixed. Then there exists  a continuous function $F=F_{u_0,R}:[0,+\infty)\times[0,+\infty)\to[0,+\infty)$,
	with $F(t,r)$ nondecreasing in both variables and superlinear with respect to $r$, such that for any given $t>0$ and $x,x_0\in\R^n$, with 
	$|x-x_0|\leqslant R$, every minimizer  $\xi\in\Gamma^t_{x_0,x}$ for \eqref{eq:app_fundamental_solution} satisfies 
	\begin{align*}
		\int^t_0|\dot{\xi}(s)|\ ds\leqslant tF(t,R/t)
	\end{align*}
and
	\begin{align*}
		\operatorname*{ess\ inf}_{s\in[0,t]}|\dot{\xi}(s)|\leqslant F(t,R/t),\quad\sup_{s\in[0,t]}|\xi(s)-x_0|\leqslant tF(t,R/t).
	\end{align*}
\end{corollary}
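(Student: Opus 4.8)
\noindent The plan is to read off this corollary from Lemma~\ref{equi_integrable} by letting $\varepsilon\to 0^{+}$. The one observation that makes this work is that a minimizer sits at the very bottom of the family $\{\mathcal{A}_\varepsilon\}$: if $\xi\in\Gamma^t_{x_0,x}$ minimizes the action \eqref{eq:app_fundamental_solution}, then integrating the Carath\'eodory equation \eqref{eq:app_caratheodory_L} gives $J(\xi)=u_\xi(t)-u_0=\inf_{\eta\in\mathcal{A}}J(\eta)$, so that $\xi\in\mathcal{A}_\varepsilon$ for \emph{every} $\varepsilon>0$, in particular for every $\varepsilon\in(0,1)$. (Here one uses that $\mathcal{A}\ne\varnothing$ and that $\inf_{\mathcal{A}}J$ is finite by Lemma~\ref{u_low_bound}, so that any minimizer over $\Gamma^t_{x_0,x}$ necessarily belongs to $\mathcal{A}$.)

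With $u_0\in\R$ and $R>0$ fixed, I would then take $F=F_{u_0,R}$ to be \emph{exactly} the function furnished by Lemma~\ref{equi_integrable}: it is continuous, $F(t,\cdot)$ is nondecreasing and superlinear, $F(\cdot,r)$ is nondecreasing, and --- this is the point that must be checked --- it depends only on $u_0$ and $R$, not on the particular $t$, $x$ or $x_0$ at hand. Given any $t>0$, any $x,x_0$ with $|x-x_0|\le R$ and any minimizer $\xi$, the observation of the previous paragraph together with Lemma~\ref{equi_integrable} yields $\int_0^t|\dot\xi(s)|\,ds\le tF(t,R/t)+\varepsilon$ for all $\varepsilon\in(0,1)$; letting $\varepsilon\to 0^{+}$ gives the first inequality $\int_0^t|\dot\xi(s)|\,ds\le tF(t,R/t)$.

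The other two estimates follow from the first by elementary bookkeeping. The essential infimum of a nonnegative function is at most its mean, so $\operatorname*{ess\,inf}_{s\in[0,t]}|\dot\xi(s)|\le \frac1t\int_0^t|\dot\xi(s)|\,ds\le F(t,R/t)$; and since $\xi(0)=x_0$, for every $s\in[0,t]$ one has $|\xi(s)-x_0|=\bigl|\int_0^s\dot\xi(\tau)\,d\tau\bigr|\le\int_0^t|\dot\xi(\tau)|\,d\tau\le tF(t,R/t)$, whence the bound on $\sup_{s\in[0,t]}|\xi(s)-x_0|$.

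There is no genuine analytic difficulty here, since the substance is already contained in Lemma~\ref{equi_integrable}; the only delicate point is the uniformity asserted above. One has to verify that the function $F$ assembled across Lemmas~\ref{bound_u(t)}, \ref{bound_u} and \ref{equi_integrable} is a single fixed function of its arguments $(r_1,r_2)=(t,R/t)$ and does not silently depend on the value of $t$ being estimated. Inspecting that construction, every auxiliary function occurring there ($\kappa$, the $F_i$, the $G_i$ and the $C_i$) is written as an explicit expression in $r_1$ and $r_2$ whose constants involve only the structural data, $u_0$ and $R$, so this is indeed the case; I would make the remark explicit at the moment Lemma~\ref{equi_integrable} is invoked.
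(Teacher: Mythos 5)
Your proposal is correct and follows essentially the same route as the paper: the first bound is read off from Lemma~\ref{equi_integrable} (you merely make explicit the step the paper leaves implicit, namely that a minimizer satisfies $J(\xi)=u_\xi(t)-u_0=\inf_{\mathcal{A}}J$ and hence lies in $\mathcal{A}_\varepsilon$ for every $\varepsilon\in(0,1)$, after which one lets $\varepsilon\to0^{+}$), and the remaining two estimates follow from $\operatorname*{ess\,inf}_{s\in[0,t]}|\dot\xi(s)|\leqslant\frac1t\int_0^t|\dot\xi(s)|\,ds$ and $|\xi(s)-x_0|\leqslant\int_0^t|\dot\xi(\tau)|\,d\tau$, exactly as in the paper's proof. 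Your added check that the function $F$ depends only on $u_0$ and $R$ is a sound clarification but not a departure from the paper's argument.
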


\begin{proof}
	The first assertion is direct from Lemma \ref{equi_integrable}. The last two inequality follows from the relations
	\begin{align*}
		\operatorname*{ess\ inf}_{s\in[0,t]}|\dot{\xi}(s)|\leqslant\frac 1t\int^t_0|\dot{\xi}(s)|\ ds,\quad\text{and}\quad |\xi(s)-x_0|\leqslant\int^t_0|\dot{\xi}(s)|\ ds,
	\end{align*}
	together with the first assertion.
%%%%%%%%%%%%%%%%%%%%%%%%%%%%%%%%%%%%%%%%%%%%%%%%%%%%%%%%%%%%%%%%%%%%%%
\end{proof}
%%%%%%%%%%%%%%%%%%%%%%%%%%%%%%%%%%%%%%%%%%%%%%%%%%%%%%%%%%%%%%%%%%%%%%
%%%%%%%%%%%%%%%%%%%%%%%%%%%%%%%%%%%%%%%%%%%%%%%%%%%%%%%%%%%%%%%%%%%%%%
\section{Necessary conditions and regularity of minimizers}
%%%%%%%%%%%%%%%%%%%%%%%%%%%%%%%%%%%%%%%%%%%%%%%%%%%%%%%%%%%%%%%%%%%%%%
%%%%%%%%%%%%%%%%%%%%%%%%%%%%%%%%%%%%%%%%%%%%%%%%%%%%%%%%%%%%%%%%%%%%%%
\subsection{Lipschitz estimate of minimizers}
%%%%%%%%%%%%%%%%%%%%%%%%%%%%%%%%%%%%%%%%%%%%%%%%%%%%%%%%%%%%%%%%%%%%%%
%%%%%%%%%%%%%%%%%%%%%%%%%%%%%%%%%%%%%%%%%%%%%%%%%%%%%%%%%%%%%%%%%%%%%%
In order to study the regularity of a minimizers $\xi$ of \eqref{eq:app_fundamental_solution}, we need an {\em a priori} Lipschitz estimate for $\xi$. A key point of the proof of such an estimate is  the following {\em Erdmann condition}, which is standard for classical autonomous Tonelli Lagrangians. For the results in and after this section, we suppose the following technical condition
\begin{enumerate}[(L2')]
	\item $L$ satisfies condition (L2). Moreover there exists $R>1$ and $C_A>0$ depending on compact $A\subset\R^n$ such that
	\begin{align*}
		L(x,0,rv)\leqslant C_A(1+L(x,0.v)),\quad \forall r\in[1,R],\ (x,v)\in A\times\R^n.
	\end{align*}
\end{enumerate}

We begin we some fundamental results from convex analysis.

\begin{lemma}\label{convexity}
Let $L$ satisfy conditions \mbox{\rm (L1)-(L3)}. We conclude that
\begin{enumerate}[\rm (a)]
%  \item $L_v(x,r,v)\cdot v-L(x,r,v)\geqslant0$ for all $(x,r,v)\in\R^n\times\R\times\R^n$.
  \item The function
  \begin{equation}\label{eq:f_epsilon}
	f(\varepsilon):=L_v(x,r,v/(1+\varepsilon))\cdot v/(1+\varepsilon)-L(x,r,v/(1+\varepsilon))
  \end{equation}
  is decreasing for $\varepsilon>-1$. In particular, $f(\varepsilon)\geqslant f(+\infty)=-L(x,r,0)\geqslant-\overline{\theta}_0(0)-K|r|$.
  \item If $\varepsilon_1,\varepsilon_2>-1$ and $\varepsilon_1<\varepsilon_2$, then we have
  \begin{align*}
  	L(x,r,v/(1+\varepsilon_2))\leqslant (\kappa+1)^{-1}L(x,r,v/(1+\varepsilon_1))+\kappa\cdot(\kappa+1)^{-1}(\overline{\theta}_0(0)+K|r|)
  \end{align*}
  and
  \begin{align*}
  	f(\varepsilon_2)\leqslant\kappa^{-1}L(x,r,v/(1+\varepsilon_1))-(\kappa^{-1}+1)L(x,r,v/(1+\varepsilon_2))
  \end{align*}
  where $\kappa=(\varepsilon_2-\varepsilon_1)/(1+\varepsilon_1)>0$.
\end{enumerate}

\end{lemma}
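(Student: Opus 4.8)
The plan is to reduce the whole statement to elementary facts about a single convex function of one real variable. Fix $(x,r,v)\in\R^n\times\R\times\R^n$ and set $\phi(s):=L(x,r,sv)$ for $s\geqslant0$. Since $s\mapsto sv$ is affine, (L1) makes $\phi$ strictly convex, and the $C^2$ regularity of $L$ makes $\phi\in C^2([0,+\infty))$, with $\phi'(s)=L_v(x,r,sv)\cdot v$ and $\phi''\geqslant0$. The substitution $s=1/(1+\varepsilon)$ is a strictly decreasing bijection from $(-1,+\infty)$ onto $(0,+\infty)$, and under it the quantity \eqref{eq:f_epsilon} becomes $f(\varepsilon)=g(s)$, where $g(s):=s\phi'(s)-\phi(s)$.

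For part (a), I would differentiate: $g'(s)=\phi'(s)+s\phi''(s)-\phi'(s)=s\phi''(s)\geqslant0$, and in fact $g$ is strictly increasing on $(0,+\infty)$ since for $0<s_1<s_2$ one has $g(s_2)-g(s_1)=\int_{s_1}^{s_2}\tau\phi''(\tau)\,d\tau\geqslant s_1\bigl(\phi'(s_2)-\phi'(s_1)\bigr)>0$ by strict convexity. Because $\varepsilon\mapsto s=1/(1+\varepsilon)$ reverses order, $f$ is decreasing in $\varepsilon$. Letting $\varepsilon\to+\infty$, i.e. $s\to0^+$, continuity of $\phi$ and $\phi'$ at $0$ gives $f(+\infty)=\lim_{s\to0^+}\bigl(s\phi'(s)-\phi(s)\bigr)=-\phi(0)=-L(x,r,0)$; then \eqref{sup_lin} (applied with $\overline{\theta}_r=\overline{\theta}_0+K|r|$, for the given $r$) yields $-L(x,r,0)\geqslant-\overline{\theta}_0(0)-K|r|$, and monotonicity of $f$ gives $f(\varepsilon)\geqslant f(+\infty)$ for all $\varepsilon>-1$.

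For part (b), put $s_i=1/(1+\varepsilon_i)$, so $0<s_2<s_1$ and $\kappa+1=(1+\varepsilon_2)/(1+\varepsilon_1)=s_1/s_2$; in particular $s_2=\tfrac{1}{\kappa+1}s_1+\tfrac{\kappa}{\kappa+1}\cdot0$ and $s_1-s_2=\kappa s_2$. The first inequality is then just convexity of $\phi$ along this convex combination, $\phi(s_2)\leqslant\tfrac{1}{\kappa+1}\phi(s_1)+\tfrac{\kappa}{\kappa+1}\phi(0)$, after bounding $\phi(0)=L(x,r,0)\leqslant\overline{\theta}_0(0)+K|r|$ as in part (a). For the second inequality I would use the slope estimate coming from convexity, $\phi'(s_2)\,(s_1-s_2)\leqslant\phi(s_1)-\phi(s_2)$, hence $s_2\phi'(s_2)\leqslant\kappa^{-1}\bigl(\phi(s_1)-\phi(s_2)\bigr)$ because $s_1-s_2=\kappa s_2$; adding $-\phi(s_2)$ gives $f(\varepsilon_2)=s_2\phi'(s_2)-\phi(s_2)\leqslant\kappa^{-1}\phi(s_1)-(\kappa^{-1}+1)\phi(s_2)$, which is exactly the claim.

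The argument is essentially bookkeeping in one variable and there is no serious obstacle; the only points that deserve care are the change of variable $s=1/(1+\varepsilon)$ (so that the map $\varepsilon\mapsto s$ reverses monotonicity), the evaluation of the limit $f(+\infty)$—i.e. noting $s\phi'(s)\to0$ as $s\to0^+$, which is immediate since $\phi'$ is continuous—and correctly identifying the convex-combination weights $\tfrac{1}{\kappa+1},\tfrac{\kappa}{\kappa+1}$ and the ratio $s_2/(s_1-s_2)=\kappa^{-1}$ in terms of $\kappa=(\varepsilon_2-\varepsilon_1)/(1+\varepsilon_1)$.
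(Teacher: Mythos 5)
Your proof is correct and follows essentially the same route as the paper: both arguments rest on first-order convexity of $L(x,r,\cdot)$ along the ray through $v$, and your one-variable substitution $s=1/(1+\varepsilon)$ with $\phi(s)=L(x,r,sv)$ is just a tidier packaging of the paper's two-point gradient inequalities (for (a) the paper uses monotonicity of the gradient rather than $g'(s)=s\phi''(s)\geqslant 0$, and for the first inequality in (b) it uses the gradient inequality plus part (a) instead of your convex combination with the origin, but these are equivalent bookkeeping). The only harmless overstatement is the claim that $\phi$ is strictly convex and $g$ strictly increasing --- this fails when $v=0$, where $\phi$ is constant --- but the lemma only needs the non-strict monotonicity that your inequality $g'(s)\geqslant 0$ already provides.
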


\begin{proof}
%We have that $L_v(x,r,v)\cdot v-L(x,r,v)=\int^1_0(L_v(x,r,v)-L_v(x,r,\lambda v))\cdot v\ d\lambda\geqslant0$ since $(L_v(x,r,v)-L_v(x,r,\lambda v))\cdot (1-\lambda)v\geqslant0$ by the convexity. This leads to (a).
Let $\varepsilon_1,\varepsilon_2\in(-1,+\infty)$ and $\varepsilon_1<\varepsilon_2$. We set $L(v)=L(x,r,v)$. By (L1) we have that
\begin{align*}
	L(v/(1+\varepsilon_2))\geqslant L(v/(1+\varepsilon_1))+L_v(v/(1+\varepsilon_1))\cdot\{v/(1+\varepsilon_2)-v/(1+\varepsilon_1)\}.
\end{align*}
It follows that
\begin{align*}
	f(\varepsilon_1)-f(\varepsilon_2)\geqslant&\,L_v(v/(1+\varepsilon_1))\cdot(v/(1+\varepsilon_1))-L_v(v/(1+\varepsilon_2))\cdot(v/(1+\varepsilon_2))\\
	&\,+L_v(v/(1+\varepsilon_1))\cdot\{v/(1+\varepsilon_2)-v/(1+\varepsilon_1)\}\\
	=&\,L_v(v/(1+\varepsilon_1))\cdot(v/(1+\varepsilon_2))-L_v(v/(1+\varepsilon_2))\cdot(v/(1+\varepsilon_2))\\
	=&\,\{L_v(v/(1+\varepsilon_1))-L_v(v/(1+\varepsilon_2))\}\cdot\{v/(1+\varepsilon_1)-v/(1+\varepsilon_2)\}\\
	&\,\cdot(1/(1+\varepsilon_1)-1/(1+\varepsilon_2))^{-1}\cdot(1/(1+\varepsilon_1))\\
	\geqslant&\,0.
\end{align*}
The last statement is direct together with (L2) and (L3).

Now we turn to the proof of (b). Observe that by convexity
\begin{align*}
	L(v/(1+\varepsilon_1))\geqslant&\,L(v/(1+\varepsilon_2))+L_v(v/(1+\varepsilon_2))\cdot\{v/(1+\varepsilon_1)-v/(1+\varepsilon_2)\}\\
	=&\,L(v/(1+\varepsilon_2))+\kappa\cdot L_v(v/(1+\varepsilon_2))\cdot(v/(1+\varepsilon_2)).
\end{align*}
In view of (a) we obtain that
\begin{align*}
	&\,L(v/(1+\varepsilon_1))-(\kappa+1)L(v/(1+\varepsilon_2))\\
	\geqslant&\,\kappa\cdot\{-L(v/(1+\varepsilon_2))+L_v(v/(1+\varepsilon_2))\cdot(v/(1+\varepsilon_2))\}\\
	\geqslant&\,-\kappa\cdot(\overline{\theta}_0(0)+K|r|)
\end{align*}
Then the first assertion follows. Moreover, we have that
\begin{align*}
	L_v(v/(1+\varepsilon_2))\cdot(v/(1+\varepsilon_2))\leqslant\kappa^{-1}(L(v/(1+\varepsilon_1))-L(v/(1+\varepsilon_2)))
\end{align*}
which leads to the second assertion.
\end{proof}

\begin{lemma}[Erdmann condition]\label{Erdmann_condition}
	Suppose \mbox{\rm (L1), (L2')} and \mbox{\rm (L3)} are satisfied. Let $\xi\in\Gamma^t_{x_0,x}$ be a minimizer of \eqref{eq:app_fundamental_solution} with $u_{\xi}$ determined by \eqref{eq:app_caratheodory_L} and $u_{\xi}(0)=u_0$. Set $\int^s_0L_udr=\int^s_0L_u(\xi(r),u_{\xi}(r),\dot{\xi}(r))dr$ and define
	\begin{align*}
		E(s)=e^{-\int^s_0L_udr}\cdot\left\{L_v(\xi(s),u_{\xi}(s),\dot{\xi}(s))\cdot\dot{\xi}(s)-L(\xi(s),u_{\xi}(s),\dot{\xi}(s))\right\}
	\end{align*}
	for almost all $s\in[0,t]$. Then $E$ has a continuous representation $\bar{E}$ such that $\bar{E}$ is absolutely continuous on $[0,t]$ and $\bar{E}'=0$ for almost all $s\in[0,t]$.
\end{lemma}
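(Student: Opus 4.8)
\emph{Proof strategy.} The plan is to exploit \emph{inner variations}, i.e.\ reparametrizations of the time interval $[0,t]$, which is the classical device that yields Erdmann-type identities for autonomous problems; here the $u$-dependence of $L$ produces the extra exponential weight $e^{-\int_0^sL_u\,dr}$. Fix $\phi\in C^1_c((0,t))$ and, for $|\varepsilon|$ small, set $\varphi_\varepsilon(s)=s+\varepsilon\phi(s)$, an increasing $C^1$-diffeomorphism of $[0,t]$ fixing the endpoints; write $\psi_\varepsilon=\varphi_\varepsilon^{-1}$ and introduce the competitor $\xi_\varepsilon:=\xi\circ\varphi_\varepsilon\in\Gamma^t_{x_0,x}$, so that $\dot\xi_\varepsilon=(\dot\xi\circ\varphi_\varepsilon)\,\varphi_\varepsilon'$. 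The first point to settle is that $\xi_\varepsilon\in\mathcal A$, so that $u_{\xi_\varepsilon}$ is well defined via \eqref{eq:app_caratheodory_L}: for $\varepsilon$ small $\varphi_\varepsilon'$ takes values in a compact subinterval of $(0,R]$, and then the convexity of $L(x,0,\cdot)$ (for speed factors $\le1$) together with condition (L2') (for factors in $[1,R]$) bounds $|L(\xi_\varepsilon,0,\dot\xi_\varepsilon)|$ pointwise by $C\bigl(1+|L(\xi\circ\varphi_\varepsilon,0,\dot\xi\circ\varphi_\varepsilon)|\bigr)$, whence $\xi_\varepsilon\in\mathcal A=\mathcal A'$ after the change of variable $\tau=\varphi_\varepsilon(s)$. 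Since $\xi$ minimizes $J$ over $\Gamma^t_{x_0,x}$, the scalar map $\varepsilon\mapsto J(\xi_\varepsilon)=u_{\xi_\varepsilon}(t)-u_0$ has a minimum at $\varepsilon=0$.

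Next I would transport the Carath\'eodory equation back to the original variable. Putting $w_\varepsilon(\tau):=u_{\xi_\varepsilon}(\psi_\varepsilon(\tau))$, the chain rule together with $\xi\circ\varphi_\varepsilon\circ\psi_\varepsilon=\xi$ and $\psi_\varepsilon'=1/(\varphi_\varepsilon'\circ\psi_\varepsilon)$ gives $w_\varepsilon(0)=u_0$, $w_\varepsilon(t)=u_{\xi_\varepsilon}(t)$, and
\begin{equation*}
w_\varepsilon'(\tau)=\bigl(1+\nu_\varepsilon(\tau)\bigr)\,L\!\Bigl(\xi(\tau),w_\varepsilon(\tau),\tfrac{\dot\xi(\tau)}{1+\nu_\varepsilon(\tau)}\Bigr),\qquad w_\varepsilon(0)=u_0,
\end{equation*}
where $1+\nu_\varepsilon(\tau):=1/\varphi_\varepsilon'(\psi_\varepsilon(\tau))$, so $\nu_0\equiv0$, $w_0=u_\xi$ and $\partial_\varepsilon\nu_\varepsilon\big|_{\varepsilon=0}=-\phi'$. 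The key technical step is that $\varepsilon\mapsto w_\varepsilon$ is differentiable at $\varepsilon=0$, uniformly on $[0,t]$: this is precisely where condition (L2') and the convexity estimates of Lemma~\ref{convexity} enter, since they dominate $L\bigl(\xi,w_\varepsilon,(1+\nu_\varepsilon)^{-1}\dot\xi\bigr)$, its $u$-derivative (bounded by $K$ anyway through (L3)) and the quantity $L_v\bigl(\xi,w_\varepsilon,(1+\nu_\varepsilon)^{-1}\dot\xi\bigr)\cdot\dot\xi$ by fixed $L^1([0,t])$ functions, uniformly for small $\varepsilon$, so that the classical theorem on differentiable dependence of solutions of ODEs on a parameter applies in spite of $\dot\xi$ being a priori only integrable. (Alternatively one may truncate the $u$-variable as in Remark~\ref{validate} to make the $u$-dependence globally Lipschitz, but the velocity bounds remain indispensable.)

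Granting this, differentiating the transported equation at $\varepsilon=0$ and setting $v:=\partial_\varepsilon w_\varepsilon\big|_{\varepsilon=0}$ leads to the linear Cauchy problem
\begin{equation*}
v'(\tau)=L_u(\xi,u_\xi,\dot\xi)\,v(\tau)+\phi'(\tau)\bigl\{L_v(\xi,u_\xi,\dot\xi)\cdot\dot\xi-L(\xi,u_\xi,\dot\xi)\bigr\},\qquad v(0)=0.
\end{equation*}
Multiplying by $e^{-\int_0^\tau L_u\,dr}$ collapses the right-hand side to $\phi'(\tau)\,E(\tau)$, with $E$ the function in the statement, so that $e^{-\int_0^tL_u\,dr}\,v(t)=\int_0^t\phi'(\tau)E(\tau)\,d\tau$. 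Since $\varepsilon=0$ is a minimum of $\varepsilon\mapsto u_{\xi_\varepsilon}(t)$ we have $v(t)=0$, hence $\int_0^t\phi'(\tau)E(\tau)\,d\tau=0$ for every $\phi\in C^1_c((0,t))$; the du Bois--Reymond form of the fundamental lemma of the calculus of variations (note $E\in L^1([0,t])$ by the convexity bounds above) then forces $E$ to coincide a.e.\ with a constant, which is the sought representative $\bar E$, continuous and absolutely continuous with $\bar E'=0$ a.e.

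I expect the differentiation in $\varepsilon$ to be the only genuine obstacle: at this stage the minimizer is known only to lie in $W^{1,1}$, so one cannot naively differentiate under the integral sign, and the entire role of the technical assumption (L2') and of the convexity estimates of Lemma~\ref{convexity} is to provide the uniform integrable domination that legitimizes both the membership $\xi_\varepsilon\in\mathcal A$ and the parameter-differentiation of the Carath\'eodory ODE. Everything else is the classical inner-variation computation.
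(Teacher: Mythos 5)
Your proposal is correct and is at heart the same inner-variation argument the paper uses: reparametrize time keeping the endpoints fixed, transport the Carath\'eodory equation back to the original variable, extract a first-order necessary condition at $\varepsilon=0$, and finish with du Bois--Reymond; condition (L2') and Lemma~\ref{convexity} play exactly the role you assign to them, namely integrable domination of $L(\xi,\cdot,\dot\xi/(1+\nu))$ and of $L_v\cdot\dot\xi/(1+\nu)-L$ for speed factors on both sides of $1$. The genuine difference is how the derivative in $\varepsilon$ is justified. You invoke the theorem on differentiable dependence of Carath\'eodory solutions on a parameter; the paper instead writes the explicit variation-of-constants identity \eqref{eq_implicit} for $u_{\xi,\alpha}-u_\xi$ with $\alpha=1+\varepsilon\beta$, and then never proves two-sided differentiability: it uses the monotonicity in $\varepsilon$ of $l_\varepsilon$ (Lemma~\ref{convexity}(a)), splits $\beta=\beta^+-\beta^-$, and passes to the limit one-sidedly, getting $\int_0^t e^{\int_s^tL_u\,dr}l_0\,\beta\,ds\leqslant0$ for every $\beta$ and hence equality by replacing $\beta$ with $-\beta$. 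The paper's route buys two things: it works with arbitrary bounded measurable speed perturbations $\beta$, and it sidesteps the smoothness-in-$\varepsilon$ hypotheses of the parameter-differentiability theorem. In your setup there is a small wrinkle at exactly that point: since your perturbation enters as $1/\varphi_\varepsilon'(\psi_\varepsilon(\tau))$, i.e.\ through the inverse map, the right-hand side of the transported ODE is $C^1$ in $\varepsilon$ near $0$ only if $\phi\in C^2_c$ (differentiating produces $\phi''\circ\psi_\varepsilon$); this is harmless -- take $\phi\in C^2_c$, or perturb the speed $\alpha=1+\varepsilon\beta$ directly in the $s$-variable as the paper does, so that $\varepsilon$ enters algebraically -- but it should be said. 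With that fix, and with the uniform $L^1$ domination you describe (which also yields the a priori bound on $w_\varepsilon$ needed to keep $w$ in a fixed compact range), your argument closes correctly and yields the same conclusion.
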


\begin{remark}
Condition (L2') is satisfied when $L$ has polynomial growth with respect to $v$. It is a key point to ensure the finiteness of the action after reparametrization.
\end{remark}

%%%%%%%%%%%%%%%%%%%%%%%%%%%%%%%%%%%%%%%%%%%%%%%%%%%%%%%%%%%%%%%%%%%%%%
\begin{proof}
%%%%%%%%%%%%%%%%%%%%%%%%%%%%%%%%%%%%%%%%%%%%%%%%%%%%%%%%%%%%%%%%%%%%%%
We divide the proof into several steps.

\medskip

\noindent\textbf{Step I: Reparametrization.} For any measurable function  $\alpha:[0,t]\to[1/2,3/2]$ satisfying $\int^t_0\alpha(s)\ ds=t$ (the set of all such functions $\alpha$ is denoted by $\Omega$), we define
\begin{align*}
	\tau(s)=\int^s_0\alpha(r)\ dr,\quad s\in [0,t].
\end{align*}
Note that $\tau:[0,t]\to[0,t]$ is a bi-Lipschitz map and its inverse $s(\tau)$ satisfies
$$
s'(\tau)=\frac 1{\alpha(s(\tau))},\quad a.e.\ \tau\in[0,t].
$$

Now, given $\xi\in\Gamma^t_{x_0,x}$  as above and $\alpha\in\Omega$, define the reparameterization $\eta$ of $\xi$ by $\eta(\tau)=\xi(s(\tau))$. It follows that $\dot{\eta}(\tau)=\dot{\xi}(s(\tau))/\alpha(s(\tau))$. Let $u_{\eta}$ be the unique solution of \eqref{eq:app_caratheodory_L} with initial condition $u_{\eta}(0)=u_0$. Then we have that
\begin{align*}
	J(\xi)\leqslant\, J(\eta)=&\int^t_0L(\eta(\tau),u_{\eta}(\tau),\dot{\eta}(\tau))\ d\tau\\
	=&\,\int^t_0L(\xi(s),u_{\xi,\alpha}(s),\dot{\xi}(s)/\alpha(s))\alpha(s)\ ds%:=\Phi_1(\alpha)
\end{align*}
where $u_{\xi,\alpha}$ solves
\begin{align*}
	\dot{u}_{\xi,\alpha}(s)=L(\xi(s),u_{\xi,\alpha}(s),\dot{\xi}(s)/\alpha(s))\alpha(s),\quad u_{\xi,\alpha}(0)=u_0.
\end{align*}
By a direct calculation, for all $\alpha\in\Omega$ and almost all $s\in[0,t]$, we obtain 
\begin{align*}
	\dot{u}_{\xi,\alpha}-\dot{u}_{\xi}=&\,L(\xi,u_{\xi,\alpha},\dot{\xi}/\alpha)\alpha-L(\xi,u_{\xi},\dot{\xi})\\
	=&\,L(\xi,u_{\xi,\alpha},\dot{\xi}/\alpha)\alpha-L(\xi,u_{\xi},\dot{\xi}/\alpha)\alpha+L(\xi,u_{\xi},\dot{\xi}/\alpha)\alpha-L(\xi,u_{\xi},\dot{\xi})\\
	=&\,\widehat{L_u^{\alpha}}\,(u_{\xi,\alpha}-u_{\xi})+L(\xi,u_{\xi},\dot{\xi}/\alpha)\alpha-L(\xi,u_{\xi},\dot{\xi}),
\end{align*}
and $u_{\xi,\alpha}(0)-u_{\xi}(0)=0$, where
\begin{align*}
	\widehat{L_u^{\alpha}}(s)=\int^1_0L_u\big(\xi(s),u_{\xi}(s)+\lambda(u_{\xi,\alpha}(s)-u_{\xi}(s)),\dot{\xi}(s)/\alpha(s)\big)\alpha(s)\ d\lambda.
\end{align*}
By solving the Carath\'eodory equation above, we conclude that
\begin{equation}\label{eq_implicit}
	u_{\xi,\alpha}(s)-u_{\xi}(s)=\int^s_0e^{\int^s_\tau\widehat{L_u^{\alpha}}dr}(L(\xi,u_{\xi},\dot{\xi}/\alpha)\alpha-L(\xi,u_{\xi},\dot{\xi}))\ d\tau,
\end{equation}
and $u_{\xi,\alpha}(t)-u_{\xi}(t)\geqslant0$ for all $\alpha\in\Omega$. We claim that 
\begin{equation}\label{eq:integrability}
	L(\xi,u_{\xi},\dot{\xi}/\alpha)\in L^1([0,t])\ \text{for all}\ \alpha\in\Omega.
\end{equation}
To show \eqref{eq:integrability}, by Lemma \ref{bound_u} we first observe that 
%$L(\xi,0,\dot{\xi})\in L^1([0,t])$ since by Lemma \ref{bound_u},
%\begin{align*}
%	L(\xi,0,\dot{\xi})\leqslant L(\xi,u_{\xi},\dot{\xi})+K|u_{\xi}|\leqslant L(\xi,u_{\xi},\dot{\xi})+KF_1(t,|x_0-x|/t).
%\end{align*}
%It follows that
\begin{align*}
	L(\xi,u_{\xi},\dot{\xi}/\alpha)\geqslant L(\xi,0,\dot{\xi}/\alpha)-K|u_{\xi}|\geqslant\theta_0(0)-c_0-KF_1(t,|x_0-x|/t)
\end{align*}
which gives the lower bound of $L(\xi,u_{\xi},\dot{\xi}/\alpha)$. For the upper bound we will treat two cases:
\begin{enumerate}[1.]
  \item Suppose $\alpha\in[1,3/2]$. Then Lemma \ref{convexity} (b) shows that $L(\xi,u_{\xi},\dot{\xi}/\alpha)\leqslant(\kappa+1)^{-1}L(\xi,u_{\xi},\dot{\xi})+\kappa\cdot(\kappa+1)^{-1}(\overline{\theta}_0(0)+K|u_{\xi}|)$;
  \item For the case $\alpha\in[1/2,1]$, we need condition (L2'). Let $A=\bar{B}(0,tF_2(t,|x_0-x|/t))$ where $F_2$ is determined by Corollary \ref{bound_1} such that $|\xi(s)|\leqslant tF_2(t,|x_0-x|/t)$. Invoking condition (L2') we conclude that
\begin{align*}
	L(\xi,u_{\xi},\dot{\xi}/\alpha)\leqslant&\,L(\xi,0,\dot{\xi}/\alpha)+KF_1(t,|x_0-x|/t)\\
	\leqslant&\,C_A(1+L(\xi,0,\dot{\xi}))+KF_1(t,|x_0-x|/t)\\
	\leqslant&\,C_A(1+L(\xi,u_{\xi},\dot{\xi})+KF_1(t,|x_0-x|/t))+KF_1(t,|x_0-x|/t).
\end{align*}
\end{enumerate}

\medskip

\noindent\textbf{Step II: A necessary condition.} Next, we introduce the family
\begin{equation*}
	\Omega_0=\{\beta:[0,t]\to\R: \beta\in L^{\infty}([0,t]) \ \text{with}\ \int^t_0\beta(s)ds=0\}
\end{equation*}
and let $0\not=\beta\in\Omega_0$. For any $\varepsilon\in \R$ such that
 $|\varepsilon|<\varepsilon_0=\frac 1{4\|\beta\|_{\infty}}<\frac 1{2\|\beta\|_{\infty}}$ we have that $1+\varepsilon\beta\in\Omega$. 
 
Define the functional $\Lambda:\Omega\to\R$ by $\Lambda(\alpha)=u_{\xi,\alpha}(t)$. Since $\Lambda(1+\varepsilon\beta)\geqslant\Lambda(1)$ for  $|\varepsilon|<\frac 1{2\|\beta\|_{\infty}}$, then we have that $\frac d{d\varepsilon}\Lambda(1+\varepsilon\beta)\vert_{\varepsilon=0}=0$ if the derivative exists. Thus, by \eqref{eq_implicit},
\begin{equation}\label{eq:upper1}
 	\frac {\Lambda(1+\varepsilon\beta)-\Lambda(1)}{\varepsilon}=\int^t_0e^{\int^t_s\widehat{L_u^{\varepsilon}}dr}\lambda_{\varepsilon}(s)\ ds,
\end{equation}
where $\widehat{L_u^{\varepsilon}}=\widehat{L_u^{1+\varepsilon\beta}}$ and
\begin{align*}
	\lambda_{\varepsilon}(s):=&\,\frac{L(\xi,u_{\xi},\dot{\xi}/(1+\varepsilon\beta))(1+\varepsilon\beta)-L(\xi,u_{\xi},\dot{\xi})}{\varepsilon}\\
	=&\,L(\xi,u_{\xi},\dot{\xi}/(1+\varepsilon\beta))\cdot \beta+\frac 1{\varepsilon}(L(\xi,u_{\xi},\dot{\xi}/(1+\varepsilon\beta))-L(\xi,u_{\xi},\dot{\xi})).
\end{align*}
We claim that
\begin{equation}\label{eq:dBR}
	0=\frac d{d\varepsilon}\Lambda(1+\varepsilon\beta)\vert_{\varepsilon=0}=\int^t_0e^{\int^t_sL_udr}\,\left\{L(\xi,u_{\xi},\dot{\xi})-L_v(\xi,u_{\xi},\dot{\xi})\cdot\dot{\xi}\right\}\,\beta\ ds.
\end{equation}

\medskip

\noindent\textbf{Step III: On the summability.} Set
\begin{align*}
	l_{\varepsilon}(s):=L_v(\xi,u_{\xi},\dot{\xi}/(1+\varepsilon\beta))\cdot\dot{\xi}/(1+\varepsilon\beta)-L(\xi,u_{\xi},\dot{\xi}/(1+\varepsilon\beta)).
\end{align*}
Notice that we take out the variable $s$ on right side of the inequalities above. In view of Lemma \ref{convexity} (a) and Lemma \ref{bound_u} we have that $l_{\varepsilon}(s)$ is bounded below by $-(\overline{\theta}_0(0)+KF_1(t,|x_0-x|/t))$. 
%
%By Lemma \ref{convexity} (a) and Lemma \ref{bound_u}
%\begin{align*}
%	l_{\varepsilon}(s)\geqslant-(\overline{\theta}_0(0)+K|u_{\xi}|)\geqslant-(\overline{\theta}_0(0)+KF_1(t,|x_0-x|/t)):=-\Delta.
%\end{align*}
By convexity we have that
\begin{align*}
	&\,L(\xi,u_{\xi},\dot{\xi}/(1+\varepsilon\beta))-L(\xi,u_{\xi},\dot{\xi})\leqslant L_v(\xi,u_{\xi},\dot{\xi}/(1+\varepsilon\beta))\cdot\{\dot{\xi}-\dot{\xi}/(1+\varepsilon\beta)\}\\
	=&\,-\varepsilon\beta\cdot L_v(\xi,u_{\xi},\dot{\xi}/(1+\varepsilon\beta))\cdot\dot{\xi}/(1+\varepsilon\beta).
\end{align*}
It follows that
\begin{align}
	\lambda_{\varepsilon}(s)\leqslant&\,-\beta(s)\{L_v(\xi,u_{\xi},\dot{\xi}/(1+\varepsilon\beta))\cdot\dot{\xi}/(1+\varepsilon\beta)-L(\xi,u_{\xi},\dot{\xi}/(1+\varepsilon\beta))\}\nonumber\\
	=&\,-\beta(s)\cdot l_{\varepsilon}(s),\label{eq:upper2}
\end{align}

%For any $M>0$, define
%\begin{align*}
%	E_M=\{s\in[0,t]: \dot{\xi}(s)\ \text{exists and}\ |\dot{\xi}(s)|\leqslant M\}.
%\end{align*}
%We choose $M$ such that $|E_M|>0$ and $|E_M^c|>0$ and set $\beta_M=|E_M|\cdot\mathbbm{1}_{E_M^c}-|E_M^c|\cdot\mathbbm{1}_{E_M}$. It is clear that $\beta_M\in\Omega_0$. So, by \eqref{eq:upper2}, $\lambda_{\varepsilon}(s)+\beta_M(s)l_{\varepsilon}(s)\leqslant0$ or
%\begin{align*}
%	\lambda_{\varepsilon}(s)+|E_M|\cdot\mathbbm{1}_{E_M^c}(s)\cdot l_{\varepsilon}(s)\leqslant|E_M^c|\cdot\mathbbm{1}_{E_M}\cdot l_{\varepsilon}(s).
%\end{align*}
%Thus, integrating on $[0,t]$ and recalling \eqref{eq:upper1} we obtain that
%\begin{align*}
%	|E_M|\int^t_0e^{\int^t_s\widehat{L_u^{\varepsilon}}dr}l_{\varepsilon}(s)\mathbbm{1}_{E_M^c}(s)\ ds\leqslant|E_M^c|\int^t_0e^{\int^t_s\widehat{L_u^{\varepsilon}}dr}l_{\varepsilon}(s)\mathbbm{1}_{E_M}(s)\ ds\leqslant C(M),
%\end{align*}
%by the definition of $E_M$. Since all functions are nonnegative we can apply Fatou's lemma to conclude that, as $\varepsilon$ tends to $0$,
%\begin{align*}
%	|E_M|\int^t_0e^{\int^t_s\widehat{L_u^{\varepsilon}}dr}l_{0}(s)\mathbbm{1}_{E_M^c}(s)\ ds\leqslant C(M).
%\end{align*}
%It follows that $l_0\geqslant0$ and $l_0\in L^1([0,t])$. 

%\medskip
%
%\noindent\textbf{Step IV: Proof of \eqref{eq:dBR}.} 
Let $\beta\in\Omega_0$ and $0<\varepsilon<\varepsilon_0$. We rewrite $\lambda_{\varepsilon}(s)$, $l_{\varepsilon}(s)$ as $\lambda_{\varepsilon}^{\beta}(s)$, $l_{\varepsilon}^{\beta}(s)$ respectively. Set $\beta^+=\beta\cdot\mathbbm{1}_{\{\beta\geqslant0\}}$ and $\beta^-=-\beta\cdot\mathbbm{1}_{\{\beta<0\}}$, then
\begin{align*}
	\beta=\beta^+-\beta^-,\quad\text{and}\quad\beta^{\pm}\geqslant0.
\end{align*}
By \eqref{eq:upper2} we have that
\begin{align*}
	0\leqslant\lambda_{\varepsilon}^{\beta}(s)+\beta^+l(s)_{\varepsilon}^{\beta}(s)\leqslant\beta^-(s)l_{\varepsilon}^{\beta}(s).
\end{align*}
Now observe that $\beta^+(s)l_{\varepsilon}^{\beta}(s)=\beta^+(s)l_{\varepsilon}^{\beta^+}(s)$ and $\beta^-(s)l_{\varepsilon}^{\beta}(s)=\beta^-(s)l_{-\varepsilon}^{\beta^-}(s)$. Then the inequalities above can recast as follows
\begin{equation}\label{eq:integrability2}
	0\leqslant\lambda_{\varepsilon}^{\beta}(s)+\beta^+(s)l_{\varepsilon}^{\beta^+}(s)\leqslant\beta^-(s)l_{-\varepsilon}^{\beta^-}(s).
\end{equation}
Lemma \ref{convexity} (a) ensures that $\varepsilon\mapsto l_{\varepsilon}^{\beta^-}$ is decreasing on $[-\varepsilon_0,\varepsilon_0]$ and we conclude that
\begin{equation}
	\beta^-l_{-\varepsilon}^{\beta^-}\leqslant\beta^-l_{-\varepsilon_0}^{\beta^-}\quad\forall\varepsilon\in(0,\varepsilon_0).
\end{equation}
By Lemma \ref{convexity} (b), we obtain that
\begin{align*}
	\beta^-l_{-\varepsilon}^{\beta^-}=&\,\beta^-\{L_v(\xi,u_{\xi},\dot{\xi}/(1-\varepsilon\beta^-))\cdot\dot{\xi}/(1-\varepsilon\beta^-)-L(\xi,u_{\xi},\dot{\xi}/(1-\varepsilon\beta^-))\}\\
	\leqslant&\,\beta^-(\kappa_{\varepsilon}^{\beta^-})^{-1}L(\xi,u_{\xi},\dot{\xi}/(1-\varepsilon_0\beta^-))-\beta^-((\kappa_{\varepsilon}^{\beta^-})^{-1}+1)L(\xi,u_{\xi},\dot{\xi}/(1-\varepsilon\beta^-))
\end{align*}
where $(\kappa_{\varepsilon}^{\beta^-})^{-1}=\frac{1-\varepsilon_0\beta^-}{\varepsilon_0-\varepsilon}\cdot(\beta^-)^{-1}$. In view of \eqref{eq:integrability}, \eqref{eq:integrability2} and the fact that $\beta^-(\kappa_{\varepsilon}^{\beta^-})^{-1}$ is bounded, we conclude that $\beta^-l_{-\varepsilon}^{\beta^-}\in L^1([0,t])$ for all $\varepsilon\in(0,\varepsilon_0]$.

\medskip

\noindent\textbf{Step IV: Erdmann condition.} Thus integrating \eqref{eq:integrability2} and by Lebesgue's theorem we obtain that
\begin{align*}
	0\leqslant\int^t_0e^{\int^t_sL_udr}l_0(s)\beta^+(s)ds\leqslant\int^t_0e^{\int^t_sL_udr}l_0(s)\beta^-(s)ds.
\end{align*}
Therefore, $\int^t_0e^{\int^t_sL_udr}l_0(s)\beta(s)ds\leqslant0$ and \eqref{eq:dBR} follows since $\beta\in\Omega_0$ is arbitrary.

Now, observe that  the primitive $\mu(s):=\int^s_0\beta(r)dr$ gives a one-to-one correspondence between  $\Omega_0$
and the set
\begin{align*}
	\Omega_1=\{\mu:[0,t]\to\R: \mu\ \text{is Lipschitz continuous with}\ \mu(0)=\mu(t)=0\}.
\end{align*}
Thus, \eqref{eq:dBR} can recast as follows 
\begin{equation*}
0=-e^{\int^t_0L_udr}\,\int^t_0E(s)\mu'(s)\ ds\quad\forall \mu\in\Omega_1.
%\eqno{}
\end{equation*}
Recalling $E(s)=-e^{-\int^s_0L_udr}l_0(s)\in L^1([0,t])$ by Step III, then a basic lemma in the  calculus of variations ensures that 
$E(s)$ is constant on $[0,t]$. 
%%%%%%%%%%%%%%%%%%%%%%%%%%%%%%%%%%%%%%%%%%%%%%%%%%%%%%%%%%%%%%%%%%%%%%
\end{proof}
%%%%%%%%%%%%%%%%%%%%%%%%%%%%%%%%%%%%%%%%%%%%%%%%%%%%%%%%%%%%%%%%%%%%%%
\begin{proposition}\label{Lip}
Suppose \mbox{\rm (L1), (L2')} and \mbox{\rm (L3)} are satisfied. Let $u_0\in\R$  and $R>0$ be fixed. Then there exists  a continuous function $F=F_{u_0,R}:[0,+\infty)\times[0,+\infty)\to[0,+\infty)$,
	with $F(t,r)$ nondecreasing in both variables and superlinear with respect to $r$, such that for any given $t>0$ and $x,x_0\in\R^n$, with 
	$|x-x_0|\leqslant R$, every minimizer  $\xi\in\Gamma^t_{x_0,x}$ for \eqref{eq:app_fundamental_solution} satisfies 
%	Suppose $x_0\in\R^n$, $t,R>0$, $u_0\in\R$ and $|x-x_0|\leqslant R$. If $\xi\in\Gamma^t_{x_0,x}$ is a minimizer for \eqref{eq:app_fundamental_solution}, then, there exists a function $F=F_{u_0,R}:[0,+\infty)\times[0,+\infty)\to[0,+\infty)$, $F(r_1,\cdot)$ is nondecreasing and superlinear and $F(\cdot,r_2)$ is nondecreasing for any $r_1,r_2\geqslant0$, such that
	\begin{align*}
		\operatorname*{ess\ sup}_{s\in[0,t]}|\dot{\xi}(s)|\leqslant F(t,R/t).
	\end{align*}
\end{proposition}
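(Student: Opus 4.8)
The plan is to combine the Erdmann condition of Lemma~\ref{Erdmann_condition} with the superlinear growth hypotheses (L2) and the essential-infimum bound from Corollary~\ref{bound_1}. Fix $t>0$, $x,x_0$ with $|x-x_0|\leqslant R$, and let $\xi$ be a minimizer with associated $u_\xi$. By Lemma~\ref{Erdmann_condition} the function
\[
\bar E(s)=e^{-\int_0^sL_u\,dr}\bigl\{L_v(\xi,u_\xi,\dot\xi)\cdot\dot\xi-L(\xi,u_\xi,\dot\xi)\bigr\}
\]
is constant, say $\equiv E_0$, on $[0,t]$. The exponential prefactor is controlled: since $|L_u|\leqslant K$ by (L3), we have $e^{-Kt}\leqslant e^{-\int_0^sL_u\,dr}\leqslant e^{Kt}$ for all $s$. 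Hence the quantity $g(s):=L_v(\xi(s),u_\xi(s),\dot\xi(s))\cdot\dot\xi(s)-L(\xi(s),u_\xi(s),\dot\xi(s))$ satisfies $e^{-Kt}|g(s')|\leqslant |E_0|\cdot e^{Kt}$-type two-sided comparisons; more precisely $|g(s)|\leqslant e^{2Kt}|g(s')|$ for a.e.\ $s,s'$, after bounding $|E_0|$ by evaluating at a good point.

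The next step is to pin down $|E_0|$ (equivalently $g$ at one point) using Corollary~\ref{bound_1}: there is $s_*\in[0,t]$ with $|\dot\xi(s_*)|\leqslant F(t,R/t)$ for the function $F$ of that corollary. At such a point, by convexity and (L2) together with the bound $|u_\xi(s_*)|\leqslant tF_1(t,R/t)+C_1(t)(1+|u_0|)$ from Lemma~\ref{bound_u}, the quantity $g(s_*)=L_v\cdot\dot\xi-L$ at $(\xi(s_*),u_\xi(s_*),\dot\xi(s_*))$ is bounded in absolute value by a continuous function of $(t,R/t)$ of the required monotonicity type — use $|L_v(x,r,v)\cdot v|\leqslant$ (some modulus depending on $|v|,|r|$) which follows from $C^2$ regularity of $L$ plus the superlinear lower bound, and $|L(x,r,v)|\leqslant\bar\theta_r(|v|)$. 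Therefore $|E_0|\leqslant M_0$ for an explicit $M_0=M_0(t,R/t,u_0)$, and consequently $|g(s)|\leqslant e^{2Kt}M_0=:M_1$ for a.e.\ $s\in[0,t]$.

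Finally I would convert the bound on $g$ into a bound on $|\dot\xi|$. By Lemma~\ref{convexity}(a) with $\varepsilon=0$, the map $f(\varepsilon)=L_v(x,r,v/(1+\varepsilon))\cdot v/(1+\varepsilon)-L(x,r,v/(1+\varepsilon))$ is decreasing, and $f(\varepsilon)\geqslant -(\bar\theta_0(0)+K|r|)$; more to the point, for fixed $(x,r)$ the function $v\mapsto L_v(x,r,v)\cdot v-L(x,r,v)=H(x,r,L_v(x,r,v))$ — recognize it as $H$ evaluated along the Legendre transform — is nonnegative and grows superlinearly in $|v|$ because $L$ does: indeed $L_v(x,r,v)\cdot v-L(x,r,v)\geqslant \langle p_0,v\rangle - L(x,r,v)\geqslant \langle p_0,v\rangle - \bar\theta_r(|v|)$ is not quite it, so instead use $L_v(x,r,v)\cdot v - L(x,r,v)\geqslant L(x,r,2v)-2L(x,r,v)\geqslant \theta_r(2|v|)-c_r-2\bar\theta_r(|v|)$, which tends to $+\infty$ as $|v|\to\infty$ uniformly for $|r|\leqslant M$, $x$ arbitrary, by superlinearity of $\theta_r$. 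Since $|g(s)|\leqslant M_1$ and $|u_\xi(s)|\leqslant M$ for all $s$, this forces $|\dot\xi(s)|\leqslant F(t,R/t)$ for a.e.\ $s$, with $F$ assembled from the previous constants; its monotonicity and superlinearity in the second argument are inherited from $F$ in Corollary~\ref{bound_1} and the moduli of $L$.

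The main obstacle is the last step: one needs the coercivity estimate $L_v(x,r,v)\cdot v-L(x,r,v)\to\infty$ as $|v|\to\infty$ to be \emph{uniform} over all $x\in\R^n$ and over $r$ in a bounded set, and to be quantitative enough that inverting it yields a bound of the stated form. The clean way is to invoke (L2') or the relation $L_v(x,r,v)\cdot v\geqslant L(x,r,2v)-L(x,r,v)$ (valid by convexity since $L(x,r,v)\geqslant L(x,r,2v)+L_v(x,r,2v)\cdot(v-2v)$ is the wrong direction — use instead $L(x,r,2v)\geqslant L(x,r,v)+L_v(x,r,v)\cdot v$), giving $g(s)\geqslant L(x,r,2v)-2L(x,r,v)\geqslant \theta_r(2|v|)-c_r-2\bar\theta_r(|v|)$; superlinearity of $\theta_r=\theta_0$ beats the growth of $2\bar\theta_r$ only if $\theta_0$ dominates $2\bar\theta_0$, which is \emph{not} assumed, so one must route the upper bound through (L2') exactly as in the proof of Lemma~\ref{Erdmann_condition} rather than through $\bar\theta_0$ alone. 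Getting this growth comparison right, uniformly in $(x,u_\xi(s))$, is where the real work lies.
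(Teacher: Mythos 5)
Your overall strategy (Erdmann condition, evaluate at a point where Corollary \ref{bound_1} gives a small velocity, transport the constant using $|L_u|\leqslant K$, then convert the bound on $L_v\cdot\dot\xi-L$ into a velocity bound) is the same as the paper's, and the first two steps are essentially sound, although the bound on the constant at the good point should be justified by convexity together with (L2)--(L3) (e.g.\ $L_v(x,r,v)\cdot v\leqslant L(x,r,2v)-L(x,r,v)$ and $L_v(x,r,v)\cdot v\geqslant L(x,r,v)-L(x,r,0)$), not by ``$C^2$ regularity'', which gives nothing uniform in $x\in\R^n$; also note the paper only needs a one-sided upper bound on $E_1$, not $|E_0|$.

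The genuine gap is the last step, and you have correctly located it but not closed it. Your claimed coercivity inequality $L_v(x,r,v)\cdot v-L(x,r,v)\geqslant L(x,r,2v)-2L(x,r,v)$ has the convexity inequality backwards: the gradient inequality at $v$ gives $L(x,r,2v)\geqslant L(x,r,v)+L_v(x,r,v)\cdot v$, i.e.\ $L_v\cdot v-L\leqslant L(x,r,2v)-2L(x,r,v)$, an \emph{upper} bound, not a lower one. And, as you yourself observe, even the corrected quantity $\theta_0(2|v|)-c_r-2\overline\theta_0(|v|)$ need not tend to $+\infty$, since $\overline\theta_0$ may grow much faster than $\theta_0$; so uniform (in $x$) coercivity of $v\mapsto L_v\cdot v-L$ is simply not available from (L1)--(L3), and (L2') does not supply it either --- in the paper (L2') is only used to guarantee integrability after reparametrization in the proof of the Erdmann condition, and it bounds $L$ at dilated velocities from above, which is the wrong direction for coercivity. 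The paper's resolution avoids coercivity altogether: from $E_1(s)=L_v\cdot\dot\xi-L\leqslant F_4(t,R/t)$ a.e.\ one gets $\langle L_v(\xi,u_\xi,\dot\xi),\dot\xi\rangle\leqslant L(\xi,u_\xi,\dot\xi)+F_4$, and comparing $\dot\xi(s)$ with the unit-ball velocity $\dot\xi(s)/(1+|\dot\xi(s)|)$ via convexity yields
\begin{equation*}
L(\xi(s),u_\xi(s),\dot\xi(s))\leqslant L\bigl(\xi(s),u_\xi(s),\dot\xi(s)/(1+|\dot\xi(s)|)\bigr)\,(1+|\dot\xi(s)|)+F_4\,|\dot\xi(s)|,
\end{equation*}
a \emph{linear-in-$|\dot\xi|$} upper bound on $L$ along the minimizer (the unit-ball supremum being controlled by $\overline\theta_0(1)+K|u_\xi|$ and Lemma \ref{bound_u}); pitting this against the superlinear lower bound $\theta_0(|\dot\xi|)-c_0-K|u_\xi|$ from (L2), via $\theta_0(r)\geqslant (F_4+F_5+1)r-\theta_0^*(F_4+F_5+1)$, then forces $|\dot\xi(s)|\leqslant F(t,R/t)$. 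Without this (or an equivalent) device, your argument as written does not reach the conclusion.
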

%%%%%%%%%%%%%%%%%%%%%%%%%%%%%%%%%%%%%%%%%%%%%%%%%%%%%%%%%%%%%%%%%%%%%%
\begin{proof}
%%%%%%%%%%%%%%%%%%%%%%%%%%%%%%%%%%%%%%%%%%%%%%%%%%%%%%%%%%%%%%%%%%%%%%
Let $\xi\in\Gamma^t_{x_0,x}$ be as above. Invoking Lemma \ref{Erdmann_condition}, $E$ is constant on a subset of $[0,t]$ of full measure. Let 
\begin{align*}
	E_1(s)=L_v(\xi(s),u_{\xi}(s),\dot{\xi}(s))\cdot\dot{\xi}(s)-L(\xi(s),u_{\xi}(s),\dot{\xi}(s)).
\end{align*}
Set $l_{\xi}(s,\alpha)=L(\xi(s),u_{\xi}(s),\dot{\xi}(s)/\alpha)\alpha$ for all $s\in[0,t]$ and $\alpha>0$. A simple computation shows that $l_{\xi}(s,\alpha)$ is convex in $\alpha$ and 
\begin{align*}
	\frac{d}{d\alpha}l_{\xi}(s,\alpha)\vert_{\alpha=1}=-E_1(s).
\end{align*}
Taking $s_0\in[0,t]$ such that $|\dot{\xi}(s_0)|=\operatorname*{ess\ inf}_{s\in[0,t]}|\dot{\xi}(s)|$, by convexity we have that
\begin{align*}
	-E_1(s_0)\geqslant\sup_{\alpha<1}\frac{l_{\xi}(s_0,\alpha)-l_{\xi}(s_0,1)}{\alpha-1}
\end{align*}
Let us now take, in the above inequality, $\alpha=3/4$. Then,  by (L2), (L3), Lemma \ref{bound_u} and Corollary \ref{bound_1} we conclude that
\begin{align*}
	-E_1(s_0)\geqslant&\,4(l_{\xi}(s_0,1)-l_{\xi}(s_0,3/4))=4(L(\xi(s_0),u_{\xi}(s_0),\dot{\xi}(s_0))-l_{\xi}(s_0,3/4))\\
		\geqslant&\,-4c_0-4KF_1(t,R/t)-3L(\xi(s_0),u_{\xi}(s_0),\frac 43\dot{\xi}(s_0)) \\
		\geqslant&\,-4c_0-4KF_1(t,R/t)-3KF_1(t,R/t)-3L(\xi(s_0),0,\frac 43\dot{\xi}(s_0))\\
		\geqslant&\,-4c_0-7KF_1(t,R/t)-\overline{\theta}_0(\frac 43|\dot{\xi}(s_0)|)\\
		\geqslant&\,-4c_0-7KF_1(t,R/t)-\overline{\theta}_0(F_2(t,R/t)):=-F_3(t,R/t).
\end{align*}
It follows that, for almost all $s\in[0,t]$,
\begin{align*}
	E(s)=E(s_0)=e^{-\int^{s_0}_0L_ud\tau}E_1(s_0)\leqslant e^{Kt}F_3(t,R/t),
\end{align*}
and
\begin{equation}\label{eq:Erdmann_condition}
	E_1(s)=e^{\int^s_0L_ud\tau}E(s)\leqslant e^{2Kt}F_3(t,R/t):=F_4(t,R/t).
\end{equation}

Now, let $s$ be such that $\dot{\xi}(s)$ exists and \eqref{eq:Erdmann_condition} holds. By convexity, we have that
\begin{align*}
	&\,L(\xi(s),u_{\xi}(s),\dot{\xi}(s)/(1+|\dot{\xi}(s)|))-L(\xi(s),u_{\xi}(s),\dot{\xi}(s))\\
	\geqslant&\,((1+|\dot{\xi}(s)|)^{-1}-1)\cdot\langle L_v(\xi(s),u_{\xi}(s),\dot{\xi}(s)),\dot{\xi}(s)\rangle\\
	\geqslant&\,((1+|\dot{\xi}(s)|)^{-1}-1)\cdot(L(\xi(s),u_{\xi}(s),\dot{\xi}(s))+F_4(t,R/t)).
\end{align*}
It follows that
\begin{multline*}
	L(\xi(s),u_{\xi}(s),\dot{\xi}(s))
	\\
	\leqslant\, L(\xi(s),u_{\xi}(s),\dot{\xi}(s)/(1+|\dot{\xi}(s)|))(1+|\dot{\xi}(s)|)+F_4(t,R/t)|\dot{\xi}(s)|.
\end{multline*}
Let $C=\sup_{s\in[0,t],|v|\leqslant1}L(\xi(s),u_{\xi}(s),v)$ and by (L2). By Lemma \ref{bound_u} we have that
\begin{equation*}
	C\leqslant\sup_{s\in[0,t],|v|\leqslant1}\{L(\xi(s),0,v)+K|u_{\xi}(s)|\}\leqslant\overline{\theta}_0(1)+KF_1(t,R/t):=F_5(t,R/t).
\end{equation*}
It follows that
\begin{align*}
	L(\xi(s),u_{\xi}(s),\dot{\xi}(s))\leqslant F_5(t,R/t)+(F_5(t,R/t)+F_4(t,R/t))|\dot{\xi}(s)|.
\end{align*}
Therefore, invoking Lemma \ref{bound_u}, we obtain 
\begin{align*}
	&\,(F_5(t,R/t)+F_4(t,R/t)+1)|\dot{\xi}(s)|-(\theta_0^*(F_5(t,R/t)+F_4(t,R/t)+1)+c_0)\\
	\leqslant&\, \theta_0(|\dot{\xi}(s)|)-c_0\leqslant L(\xi(s),0,\dot{\xi}(s))\leqslant L(\xi(s),u_{\xi}(s),\dot{\xi}(s))+K|u_{\xi}(s)|\\
	\leqslant&\, F_5(t,R/t)+(F_5(t,R/t)+F_4(t,R/t))|\dot{\xi}(s)|+KF_1(t,R/t).
\end{align*}
This leads to
\begin{align*}
	|\dot{\xi}(s)|
	&\leqslant(\theta_0^*(F_5(t,R/t)+F_4(t,R/t)+1)+c_0)+F_5(t,R/t)+KF_1(t,R/t)
	\\
	&:=F_6(t,R/t),
\end{align*}
which completes the proof.
%%%%%%%%%%%%%%%%%%%%%%%%%%%%%%%%%%%%%%%%%%%%%%%%%%%%%%%%%%%%%%%%%%%%%%
\end{proof}

\begin{proposition}
Suppose $L_{\lambda}(x,r,v)=L_0(x,v)-\lambda r$, $r\in\R$, where $L_0$ is a Tonelli Lagrangian. Then $L_{\lambda}$ satisfies condition \mbox{\rm (L1), (L2)} and \mbox{\rm (L3)}. Moreover, the Lipschitz estimate in Proposition \ref{Lip} holds.
\end{proposition}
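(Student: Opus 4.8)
The plan is to verify the three structural hypotheses (L1), (L2), (L3) for the discounted Lagrangian $L_{\lambda}(x,r,v)=L_0(x,v)-\lambda r$ and then observe that the hypothesis (L2'), which is what Proposition \ref{Lip} actually requires, also holds. Since $L_0$ is a Tonelli Lagrangian on $\R^n$, it is $C^2$, strictly convex in $v$, and superlinear in $v$ uniformly in $x$; in particular there are superlinear nondecreasing functions $\theta_0,\overline{\theta}_0:[0,+\infty)\to[0,+\infty)$ with $\theta_0(0)=0$ and a constant $c_0>0$ such that $\overline{\theta}_0(|v|)\geqslant L_0(x,v)\geqslant\theta_0(|v|)-c_0$. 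I would spell this out as follows.

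First, (L1): for each $(x,r)$ the map $v\mapsto L_{\lambda}(x,r,v)=L_0(x,v)-\lambda r$ differs from $L_0(x,\cdot)$ by the additive constant $-\lambda r$, hence is strictly convex and positive after possibly enlarging $c_0$ (or simply noting that $D^2_{vv}L_{\lambda}=D^2_{vv}L_0>0$ is all that is used). Second, (L2): since $L_{\lambda}(x,0,v)=L_0(x,v)$, the inequality $\overline{\theta}_0(|v|)\geqslant L_{\lambda}(x,0,v)\geqslant\theta_0(|v|)-c_0$ is exactly the Tonelli superlinearity bound for $L_0$, so (L2) holds verbatim with the same $\theta_0,\overline{\theta}_0,c_0$. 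Third, (L3): $(L_{\lambda})_r(x,r,v)=-\lambda$, which is a constant, so $|(L_{\lambda})_r|\leqslant K$ with $K=|\lambda|$; this is the one place the affine dependence in $r$ is essential, and it is immediate.

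For the final claim, that the Lipschitz estimate of Proposition \ref{Lip} applies, I would check condition (L2'). Again $L_{\lambda}(x,0,v)=L_0(x,v)$, so (L2') for $L_{\lambda}$ is exactly the statement that for each compact $A\subset\R^n$ there exist $R>1$ and $C_A>0$ with $L_0(x,rv)\leqslant C_A(1+L_0(x,v))$ for all $r\in[1,R]$ and $(x,v)\in A\times\R^n$. This is a standard property of Tonelli Lagrangians with, say, superlinear-but-at-most-polynomial growth, and in fact can be verified directly from the $C^2$ bound $D^2_{vv}L_0\leqslant C(1+|v|^{m-2})$ on compact $A$ together with superlinearity; alternatively one may simply add this mild growth restriction to the meaning of ``Tonelli'' as the authors' preceding remark suggests (``Condition (L2') is satisfied when $L$ has polynomial growth with respect to $v$''). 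Once (L1), (L2'), (L3) are in hand for $L_{\lambda}$, Proposition \ref{Lip} applies word for word to give the uniform $\operatorname*{ess\ sup}$ bound on $|\dot\xi|$ for minimizers, with the function $F$ depending on $u_0$ and $R$ exactly as stated. The only genuinely non-routine point is (L2'): verifying the reparametrization growth bound $L_0(x,rv)\leqslant C_A(1+L_0(x,v))$, which is the hypothesis that guarantees finiteness of the action after the bi-Lipschitz time change in Lemma \ref{Erdmann_condition}; everything else is a one-line substitution $r\mapsto 0$ or a differentiation in $r$.
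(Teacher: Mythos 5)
Your verification of (L1)--(L3) is fine (and is the routine part of the statement), but the ``moreover'' half of your argument has a genuine gap: you prove the Lipschitz estimate only under the extra hypothesis that $L_0$ satisfies (L2'), which you justify either by assuming polynomial growth in $v$ or by quietly redefining ``Tonelli'' to include that restriction. The proposition, however, is asserted for an arbitrary Tonelli Lagrangian $L_0$, and a general Tonelli Lagrangian need not satisfy (L2'): take $L_0(x,v)=e^{|v|^2}$, which is $C^2$, strictly convex and superlinear, yet $L_0(x,rv)=e^{r^2|v|^2}$ cannot be bounded by $C_A(1+L_0(x,v))$ for any $r>1$. So the route ``check (L2') and invoke Proposition \ref{Lip} word for word'' does not reach the stated conclusion; indeed the whole point of this proposition in the paper is that for the affine dependence $L_\lambda=L_0(x,v)-\lambda r$ the Lipschitz estimate survives \emph{without} (L2'), i.e.\ without the reparametrization-integrability hypothesis that Lemma \ref{Erdmann_condition} needs in the general implicit case.

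The paper's argument exploits the affine structure differently: the Carath\'eodory equation \eqref{eq:app_caratheodory_L} can be solved explicitly,
\begin{equation*}
u_{\xi}(t)=e^{-\lambda t}u_0+e^{-\lambda t}\int^t_0e^{\lambda s}L_0(\xi(s),\dot{\xi}(s))\,ds,
\end{equation*}
so minimizing $J$ is equivalent to a \emph{classical} (non-implicit) variational problem for the time-dependent Lagrangian $G(t,x,v)=e^{\lambda t}L_0(x,v)$, which satisfies the restricted growth condition $G_t=\lambda G$. Classical Lipschitz regularity results for such Lagrangians (e.g.\ \cite[Theorem 4.9]{Buttazzo-Giaquinta-Hildebrandt}) then give Lipschitz continuity of minimizers directly, after which the Erdmann condition of Lemma \ref{Erdmann_condition} holds with a slight modification of its proof (integrability after reparametrization is no longer an issue once $\dot\xi\in L^\infty$), and the quantitative bound follows as in the proof of Proposition \ref{Lip}. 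To repair your proof you should either reproduce this reduction, or explicitly weaken the statement to Tonelli Lagrangians with the growth property $L_0(x,rv)\leqslant C_A(1+L_0(x,v))$ --- but the latter is a strictly weaker result than the one claimed.
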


\begin{proof}
	By solving the Carath\'eodory equation \eqref{eq:caratheodory_L_intro}, we have that
	\begin{align*}
		u_{\xi}(t)=e^{-\lambda t}u_0+e^{-\lambda t}\int^t_0e^{\lambda s}L_0(\xi,\dot{\xi})\ ds.
	\end{align*}
	Therefore problem \eqref{eq:M} is essentially a basic problem in the calculus of variations with a time-dependent Lagrangian $G(t,x,v)=e^{\lambda t}L_0(x,v)$. Moreover, $G$ satisfies a restricted growth condition
	\begin{align*}
		G_t(t,x,v)=\lambda G(t,x,v).
	\end{align*}
	Then any minimizer $\xi$ of \eqref{eq:M} is Lipschitz continuous (see, for instance, \cite[Theorem 4.9]{Buttazzo-Giaquinta-Hildebrandt}). Therefore, Erdmann condition in Lemma \ref{Erdmann_condition} holds with a slight modification of the proof and the expected Lipschitz estimate can obtained as in the proof of Proposition \ref{Lip} similarly.
\end{proof}

%%%%%%%%%%%%%%%%%%%%%%%%%%%%%%%%%%%%%%%%%%%%%%%%%%%%%%%%%%%%%%%%%%%%%%
%%%%%%%%%%%%%%%%%%%%%%%%%%%%%%%%%%%%%%%%%%%%%%%%%%%%%%%%%%%%%%%%%%%%%%
\subsection{Regularity of minimizers - Herglotz equations - Lie equations}\label{app_EL}
%%%%%%%%%%%%%%%%%%%%%%%%%%%%%%%%%%%%%%%%%%%%%%%%%%%%%%%%%%%%%%%%%%%%%%
%%%%%%%%%%%%%%%%%%%%%%%%%%%%%%%%%%%%%%%%%%%%%%%%%%%%%%%%%%%%%%%%%%%%%%
Let $\xi\in\Gamma^t_{x_0,x}$ be a minimizer of \eqref{eq:app_fundamental_solution} where $u_{\xi}$ is determined uniquely by \eqref{eq:app_caratheodory_L}. For any $\lambda\in\R$ and any Lipschitz function $\eta\in\Gamma^t_{0,0}$,  we denote $\xi_{\lambda}(s)=\xi(s)+\lambda\eta(s)$. It is clear that $\xi_{\lambda}\in\Gamma^t_{x_0,x}$ and $J(\xi)\leqslant J(\xi_{\lambda})$. Let $u_{\xi_{\lambda}}$ be the associated unique solution of \eqref{eq:app_caratheodory_L} with respect to $\xi_{\lambda}$ and the initial condition $u_0$. Notice that
\begin{align*}
	\frac{\partial}{\partial\lambda}J(\xi_{\lambda})\vert_{\lambda=0}=\frac{\partial}{\partial\lambda}u_{\xi_{\lambda}}(t)\vert_{\lambda=0}=0.
\end{align*}
Now for any $s\in[0,t]$ we set
\begin{align*}
	\Delta_{\lambda}(s)=\frac {u_{\xi_{\lambda}}(s)-u_{\xi}(s)}{\lambda}=\frac 1{\lambda}\int^s_0L(\xi_{\lambda},u_{\xi_{\lambda}},\dot{\xi}_{\lambda})-L(\xi,u_{\xi},\dot{\xi})\ d\tau,
\end{align*}
and
\begin{align*}
	f^{\lambda}_1(s)=&\,\frac 1{\lambda}\int^s_0L(\xi_{\lambda},u_{\xi_{\lambda}},\dot{\xi}_{\lambda}(s))-L(\xi_{\lambda},u_{\xi_{\lambda}},\dot{\xi})\ d\tau,\\
	f^{\lambda}_2(s)=&\,\frac 1{\lambda}\int^s_0L(\xi_{\lambda},u_{\xi_{\lambda}},\dot{\xi})-L(\xi,u_{\xi_{\lambda}},\dot{\xi})\ d\tau.
\end{align*}
Then $f^{\lambda}_1$ and $f^{\lambda}_2$ are all absolutely continuous functions on $[0,t]$, and it follows
\begin{align*}
	\Delta_{\lambda}(s)=f^{\lambda}_1(s)+f^{\lambda}_2(s)+\frac 1{\lambda}\int^s_0\widehat{L^{\lambda}_u}\cdot (u_{\xi_{\lambda}}-u_{\xi})\ d\tau,\quad s\in[0,t],
\end{align*}
where
\begin{align*}
	\widehat{L^{\lambda}_u}(\tau)=\int^1_0L_u(\xi(\tau),u_{\xi}(\tau)+\theta(u_{\xi_{\lambda}}(\tau)-u_{\xi}(\tau)),\dot{\xi}(\tau))\ d\theta,\quad \tau\in[0,t].
\end{align*}
Thus, we conclude that for almost all $s\in[0,t]$, the following Carath\'eodory equation holds:
\begin{equation}\label{eq:Delta_lambda}
	\dot{\Delta}_{\lambda}(s)=\dot{f}^{\lambda}_1(s)+\dot{f}^{\lambda}_2(s)+\widehat{L^{\lambda}_u}(s)\cdot \Delta_{\lambda}(s)
\end{equation}
with initial condition $\Delta_{\lambda}(t)=a_{\lambda}$. Notice that $\lim_{\lambda\to0}\Delta_{\lambda}(t)$ exists and $\lim_{\lambda\to0}\Delta_{\lambda}(t)=\lim_{\lambda\to0}a_{\lambda}=\frac{\partial}{\partial\lambda}u_{\xi_{\lambda}}(t)\vert_{\lambda=0}=0$ since $\xi$ is a minimizer of $J$. It is not difficult to solve \eqref{eq:Delta_lambda}, we obtain that
\begin{align*}
	\Delta_{\lambda}(s)=a_{\lambda}e^{\int^s_t\widehat{L^{\lambda}_u}(r)\ dr}+e^{\int^s_t\widehat{L^{\lambda}_u}(r)\ dr}\cdot\int^s_te^{-\int^r_t\widehat{L^{\lambda}_u}(\tau)\ d\tau}\cdot(\dot{f}^{\lambda}_1(r)+\dot{f}^{\lambda}_2(r))\ dr.
\end{align*}
Since $(\xi_{\lambda}(s),\dot{\xi}_{\lambda}(s),u_{\xi_{\lambda}}(s))$ tends $(\xi(s),\dot{\xi}(s),u_{\xi}(s))$ as $\lambda\to0$ for almost all $s\in[0,t]$,  together with Proposition \ref{Lip} and Corollary \ref{equi_int_u}, it follows that, for all $s\in[0,t]$, we have
\begin{equation}\label{eq:EL_4}
	f(s):=\frac{\partial}{\partial\lambda}u_{\xi_{\lambda}}(s)\vert_{\lambda=0}=e^{\int^s_th(r)\ dr}\cdot\int^s_te^{-\int^r_th(\tau)\ d\tau}\cdot g(r)\ dr,\quad f(t)=0,
\end{equation}
where $g=L_x\cdot\eta+L_v\cdot\dot{\eta}$ and $h=L_u$ which are both measurable and bounded. Notice that \eqref{eq:Delta_lambda} implies that
\begin{align*}
	f(s)=\int^s_0g(r)+h(r)f(r)\ dr,\quad s\in[0,t].
\end{align*}
Then, invoking \eqref{eq:EL_4}, we conclude that
\begin{align*}
	0=&\,\int^t_0g(s)+h(s)\cdot e^{\int^s_th(r)\ dr}\cdot\left\{\int^s_te^{-\int^r_th(\tau)\ d\tau}\cdot g(r)\ dr\right\}ds\\
	=&\,\int^t_0g(s)\ ds+e^{\int^s_th(r)\ dr}\cdot\left\{\int^s_te^{-\int^r_th(\tau)\ d\tau}\cdot g(r)\ dr\right\}\bigg\vert^t_{0}\\
	&\,-\int^t_0e^{\int^s_th(r)\ dr}\cdot e^{-\int^s_th(r)\ dr}\cdot g(s)\ ds\\
	=&\,e^{\int^0_th(r)\ dr}\cdot\left\{\int^t_0e^{-\int^s_th(r)\ dr}\cdot g(s)\ ds\right\}.
\end{align*}
It follows that
\begin{align*}
	0=\int^t_0e^{-\int^s_th(r)\ dr}\cdot g(s)\ ds=\int^t_0e^{-\int^s_th(r)\ dr}\cdot(L_x\cdot\eta+L_v\cdot\dot{\eta})(s)\ ds.
\end{align*}
%The equality above can be also obtained by using the boundary condition $f(0)=0$.

Invoking the fundamental lemma in calculus of variation (see, for instance, Lemma 6.1.1 in \cite{Cannarsa-Sinestrari}), we obtain that, for almost all $s\in[0,t]$,
\begin{align*}
	\frac d{ds}e^{-\int^s_th(r)\ dr}L_v(\xi(s),u_{\xi}(s),\dot{\xi}(s))=e^{-\int^s_th(r)\ dr}L_x(\xi(s),u_{\xi}(s),\dot{\xi}(s)).
\end{align*}
This leads to the so called Herglotz equation (for almost all $s\in[0,t]$)
\begin{equation}\label{eq:Herglotz}
	\begin{split}
		&\,\frac d{ds}L_v(\xi(s),u_{\xi}(s),\dot{\xi}(s))\\
	=&\,L_x(\xi(s),u_{\xi}(s),\dot{\xi}(s))+L_u(\xi(s),u_{\xi}(s),\dot{\xi}(s))L_v(\xi(s),u_{\xi}(s),\dot{\xi}(s)).
	\end{split}
\end{equation}

Since $L$ is of class $C^2$ and $L(x,u,\cdot)$ is strictly convex, then by the standard argument as in \cite[Section 6.2]{Cannarsa-Sinestrari}, we conclude that:

\begin{theorem}\label{Herglotz_contact}
Under our standing assumptions, we have the following regularity properties for any minimizer $\xi$ for \eqref{eq:app_fundamental_solution}:
\begin{enumerate}[\rm (1)]
  \item Both $\xi$ and $u_{\xi}$ are of class $C^2$ and $\xi$ satisfies Herglotz equation \eqref{eq:Herglotz} for all $s\in[0,t]$ where $u_{\xi}$ is the unique solution of \eqref{eq:app_caratheodory_L};
  \item Let $p(s)=L_v(\xi(s),u_{\xi}(s),\dot{\xi}(s))$ be the dual arc. Then $p$ is also of class $C^2$ and we conclude that $(\xi,p,u_{\xi})$ satisfies Lie equation \eqref{eq:contact}.
\end{enumerate}
\end{theorem}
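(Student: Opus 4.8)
The plan is a regularity bootstrap for minimizers in the spirit of \cite[Section 6.2]{Cannarsa-Sinestrari}, combining three ingredients already in hand: the integral (Du Bois--Reymond) form of the Herglotz equation derived above, the Carath\'eodory equation \eqref{eq:app_caratheodory_L} for $u_\xi$, and the Lipschitz bound $\dot\xi\in L^\infty$ from Proposition \ref{Lip}. A preliminary step is to record the Legendre duality: since $L\in C^2$, $L(x,r,\cdot)$ is strictly convex by (L1) and superlinear by (L2)--(L3) (Remark \ref{rem:condition}), the map $v\mapsto L_v(x,r,v)$ is a homeomorphism of $\R^n$ onto $\R^n$ with inverse $p\mapsto H_p(x,r,p)$; moreover $H$ inherits class $C^2$ from $L$ (so $H_p\in C^1$) once $L_{vv}$ is nondegenerate, and at $p=L_v(x,r,v)$ one has $v=H_p(x,r,p)$, $L_x(x,r,v)=-H_x(x,r,p)$, $L_u(x,r,v)=-H_u(x,r,p)$, and $L(x,r,v)=p\cdot v-H(x,r,p)$.

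For the bootstrap itself: by Proposition \ref{Lip}, $\dot\xi\in L^\infty$, hence $\dot u_\xi=L(\xi,u_\xi,\dot\xi)\in L^\infty$, so $\xi$ and $u_\xi$ are Lipschitz. Writing $h(s):=L_u(\xi(s),u_\xi(s),\dot\xi(s))$, the Herglotz relation established above states that $s\mapsto e^{-\int_t^s h\,dr}L_v(\xi(s),u_\xi(s),\dot\xi(s))$ is absolutely continuous with derivative in $L^\infty$, hence Lipschitz, and therefore $p(s):=L_v(\xi(s),u_\xi(s),\dot\xi(s))$ is Lipschitz. Since $\dot\xi(s)=H_p(\xi(s),u_\xi(s),p(s))$ with $\xi,u_\xi,p$ continuous and $H_p$ continuous, $\dot\xi$ has a continuous representative, so $\xi\in C^1$; then $\dot u_\xi=L(\xi,u_\xi,\dot\xi)\in C^0$ gives $u_\xi\in C^1$; consequently $h$ and $L_x(\xi,u_\xi,\dot\xi)$ are continuous, so a second pass through the integral form yields $e^{-\int_t^s h}p(s)\in C^1$ and thus $p\in C^1$; feeding this back, $\dot\xi=H_p(\xi,u_\xi,p)\in C^1$, whence $\xi\in C^2$, then $u_\xi\in C^2$, then $h,L_x(\xi,u_\xi,\dot\xi)\in C^1$, and one final pass gives $p\in C^2$. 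As all quantities are now $C^1$, differentiating the product $e^{-\int_t^s h}p(s)$ converts the integral identity into the pointwise Herglotz equation \eqref{eq:Herglotz}, valid for every $s\in[0,t]$.

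It then remains to recover the Lie equation \eqref{eq:contact}. Its first line is exactly $\dot\xi=H_p(\xi,u_\xi,p)$, the Legendre inversion of $p=L_v(\xi,u_\xi,\dot\xi)$; its third line is $\dot u_\xi=L(\xi,u_\xi,\dot\xi)=p\cdot\dot\xi-H(\xi,u_\xi,p)$, the last equality being the Legendre identity. For the middle line, I would differentiate $p=L_v(\xi,u_\xi,\dot\xi)$ (legitimate now that everything is $C^1$) and substitute \eqref{eq:Herglotz}, obtaining $\dot p=L_x(\xi,u_\xi,\dot\xi)+L_u(\xi,u_\xi,\dot\xi)\,L_v(\xi,u_\xi,\dot\xi)=-H_x(\xi,u_\xi,p)-H_u(\xi,u_\xi,p)\,p$, using the duality identities and $L_v=p$.

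I expect the only delicate point to be the very first step of the bootstrap, namely passing from ``$L_v(\xi,u_\xi,\dot\xi)$ is Lipschitz'' to ``$\dot\xi$ has a continuous representative'': this relies on the global invertibility of $v\mapsto L_v(x,r,v)$, for which strict convexity gives injectivity and superlinearity gives properness and surjectivity, after which nondegeneracy of $L_{vv}$ is needed to make the inverse $C^1$ and so reach class $C^2$. A minor additional check is that the integrating factor $e^{-\int_t^s h\,dr}$ never degrades regularity, which is immediate since $h$ is at worst bounded and measurable (so the factor is Lipschitz) and improves to $C^1$ and then $C^2$ in step with $\xi$ and $u_\xi$.
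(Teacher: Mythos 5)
Your proposal is correct and follows essentially the same route as the paper: both start from the a.e.\ integral (du Bois--Reymond) form of the Herglotz equation together with the Lipschitz bound of Proposition \ref{Lip}, and upgrade the regularity of $\xi$, $u_\xi$ and $p$ by inverting the strictly convex Legendre map, which is exactly the standard bootstrap of \cite[Section 6.2]{Cannarsa-Sinestrari} that the paper invokes. The only differences are presentational: you phrase the inversion through $H_p$ where the paper uses a subsequence/uniqueness-of-limit argument and then the implicit function theorem applied to $L_v(\xi(s),u_\xi(s),v)-F(s)=\mathrm{const}$, and you write out the dual-arc and Lie-equation step (correctly flagging the nondegeneracy of $L_{vv}$, which the paper also uses implicitly in calling $v\mapsto L_v$ a diffeomorphism) that the paper omits as standard.
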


%\begin{remark}
%If $L$ is only of class $C^1$ satisfying (L1), (L2) and (L3), then all the results in previous sections still hold true. Indeed, we only use the the $C^1$ regularity property of $L$. More precisely,  Problem \eqref{eq:app_fundamental_solution} under the subsidiary condition \eqref{eq:app_caratheodory_L} admits a minimizer $\xi\in\Gamma^t_{x,y}$ which is of class $C^1$. Moreover, $L_v(\xi(\cdot),\dot{\xi}(\cdot))$ is absolutely continuous on $[0,t]$ and $\xi$ together with $u_{\xi}$ uniquely determined by \eqref{eq:app_caratheodory_L} satisfy Herglotz equation \eqref{eq:Herglotz} for almost all $s\in[0,t]$.
%\end{remark}

\begin{proof}
	We first need to show that $\xi$ is of class $C^1$. Let $N$ be the set of zero Lebesgue measure where $\dot{\xi}$ does not exist.  For $\bar{t}\in[0,t]$, choose a sequence $\{t_k\}\in [0,T]\setminus N$ such that $t_k\to\bar{t}$. Then $\dot{\xi}(t_k)\to\bar{v}$ for some $\bar{v}\in\R^n$ (up to subsequences) and
	\begin{align*}
		&\,L_v(\xi(\bar{t}),u_{\xi}(\bar{t}),\dot{\xi}(\bar{t}))=\lim_{k\to\infty}L_v(\xi(t_k),u_{\xi}(t_k),\dot{\xi}(t_k))\\
		=&\,\int^{t}_0\{L_x(\xi(s),u_{\xi}(s),\dot{\xi}(s))+L_u(\xi(s),u_{\xi}(s),\dot{\xi}(s))L_v(\xi(s),u_{\xi}(s),\dot{\xi}(s))\} ds
	\end{align*}
	by \eqref{eq:Herglotz}. From the strict convexity of $L$ it follows that the map $v\mapsto L_v(\xi(s),u_{\xi}(s),v)$ is a diffeomorphism. This implies that $\bar{v}$ is uniquely determined, i.e.,
	\begin{align*}
		\lim_{[0,t]\setminus N\ni s\to\bar{t}}\dot{\xi}(s)=\bar{v}.
	\end{align*}
	Now, by Lemma 6.2.6 in \cite{Cannarsa-Sinestrari}, $\dot{\xi}(\bar{t})$ exists and $\lim_{[0,t]\setminus N\ni s\to\bar{t}}\dot{\xi}(s)=\dot{\xi}(\bar{t})$. It follows $\xi$ is of class $C^1$. In view of \eqref{eq:app_caratheodory_L}, $u_{\xi}$ is also of class $C^1$.
	
	In view of \eqref{eq:Herglotz1}, by setting
	\begin{align*}
		F(s)=\int^{s}_0\{L_x(\xi,u_{\xi},\dot{\xi})+L_u(\xi,u_{\xi},\dot{\xi})L_v(\xi,u_{\xi},\dot{\xi})\}\ d\tau,
	\end{align*}
	we have that
	\begin{align*}
		\{L_v(\xi(s),u_{\xi}(s),v)-F(s)\}\vert_{v=\dot{\xi}(s)}=L_v(\xi(0),u_{\xi}(0),\dot{\xi}(0)).
	\end{align*}
	Then, the implicit function theorem implies $\dot{\xi}$ is of class $C^1$ since both $F$ and $L_v$ are of class $C^1$. Therefore we conclude that $\xi$ is of class $C^2$ and $u_{\xi}$ is of class $C^2$ by \eqref{eq:app_caratheodory_L}.
	
	The rest part of the proof is standard and we omit it.
\end{proof}

\section{Concluding remarks}

\subsection{Equivalence of Herglotz' variational principle and the implicit variational principle}

In the recent work \cite{Wang-Wang-Yan1,Wang-Wang-Yan2}, the authors introduce an implicit variational principle on closed manifolds which is essentially equivalent to Herglotz' principle.

\begin{proposition}[\cite{Wang-Wang-Yan1}]\label{WWY}
Let $M$ be a $C^2$ closed manifold and let $L:TM\to\R$ be of class $C^3$ and satisfy conditions (L1)-(L3) for $M$ instead of $\R^n$ here.	Given any $x_0\in M$ and $u_0\in\R$, there exists a (unique) continuous function $h_{x_0,u_0}(t,x)$ defined on $(0,+\infty)\times M$ satisfying
\begin{align*}
	h_{x_0,u_0}(t,x)=u_0+\inf_{\xi}\int^t_0L(\xi(s),h_{x_0,u_0}(s,\xi(s)),\dot{\xi}(s))\ ds,
\end{align*}
where $\xi$ is taken over all the Lipschitz continuous curves on $M$ connecting $\xi(0)=x_0$ and $\xi(t)=x$.

Moreover, let $\xi$ be any curve achieving the infimum together with the curves $p$ and $u$ defined by
\begin{align*}
	u(s)=h_{x_0,u_0}(s,\xi(s)),\quad p(s)=L_v(\xi(s),u(s),\dot{\xi}(s)).
\end{align*}
Then $(\xi,p,u_{\xi})$ is a solution of \eqref{eq:contact} with conditions $\xi(0)=x_0$, $\xi(t)=x$ and $\lim_{s\to0^+}u(s)=u_0$.
\end{proposition}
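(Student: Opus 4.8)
The strategy is to transport the Euclidean results of Sections~2--3 to the manifold $M$ and then to run a dynamic-programming argument. Working in a finite $C^2$ atlas of the compact manifold $M$ and repeating the proofs of Lemma~\ref{u_low_bound}, Lemma~\ref{bound_u}, Lemma~\ref{equi_integrable}, Corollary~\ref{bound_1}, Proposition~\ref{existence}, Proposition~\ref{Lip} and Theorem~\ref{Herglotz_contact} in local coordinates (all the constants becoming uniform by compactness, with $R$ replaced by $\operatorname{diam}M$), one obtains: for all $x_0,x\in M$, $t>0$, $u_0\in\R$, the functional $J(\xi)=\int_0^tL(\xi,u_\xi,\dot\xi)\,ds$ over $\Gamma^t_{x_0,x}$, with $u_\xi$ the unique solution of \eqref{eq:app_caratheodory_L}, has a minimizer, every minimizer is Lipschitz with a constant depending only on $t$ and $\operatorname{diam}M/t$, and every minimizer is $C^2$ and solves Herglotz' equation \eqref{eq:Herglotz}. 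One then sets
\[
	h_{x_0,u_0}(t,x):=\inf_{\xi\in\Gamma^t_{x_0,x}}u_\xi(t)=u_0+\inf_{\xi\in\Gamma^t_{x_0,x}}J(\xi),
\]
which is finite by Lemmas~\ref{u_low_bound} and \ref{bound_u}.

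The first task is to prove $h_{x_0,u_0}$ continuous --- in fact locally Lipschitz on $(0,+\infty)\times M$. Local boundedness is Lemmas~\ref{u_low_bound}--\ref{bound_u}. To estimate $|h_{x_0,u_0}(t,x)-h_{x_0,u_0}(t',x')|$ with $t,t'$ in a fixed compact subinterval of $(0,+\infty)$, take a minimizer for $(t,x)$, reparametrize it onto a slightly shorter time interval, and splice in an arc from $x$ to $x'$ of length $\leqslant d_M(x,x')$; the a priori Lipschitz and $L^1$ estimates of Proposition~\ref{Lip} and Lemma~\ref{bound_u} bound the change this causes in the Carathéodory solution at the terminal time, and exchanging the roles of $(t,x)$ and $(t',x')$ gives the two-sided bound.

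The core of the argument is a dynamic-programming (Markov) property, whose engine is the elementary fact that, since $L$ is Lipschitz in $r$ by (L3), the Carathéodory equation $\dot u=L(\zeta(s),u,\dot\zeta(s))$ has a unique solution, monotone in its initial datum; hence for a concatenation $\eta*\zeta$ one has $u_{\eta*\zeta}|_{[0,s]}=u_\eta$ while $u_{\eta*\zeta}(t)$ is nondecreasing in $u_{\eta*\zeta}(s)$. From this: (i) if $\xi$ is a minimizer for $(t,x)$ then $\xi|_{[0,s]}$ is a minimizer for $(s,\xi(s))$ --- otherwise replacing it by a cheaper arc and keeping $\xi|_{[s,t]}$ would strictly lower $u_\xi(t)$ by monotonicity --- so $u_\xi(s)=h_{x_0,u_0}(s,\xi(s))$ on $[0,t]$, whence
\[
	h_{x_0,u_0}(t,x)=u_0+\int_0^tL\big(\xi(s),h_{x_0,u_0}(s,\xi(s)),\dot\xi(s)\big)\,ds,
\]
which is the inequality $\geqslant$ in the asserted identity; (ii) for the reverse inequality, fix any Lipschitz $\eta\in\Gamma^t_{x_0,x}$, set $w(s)=h_{x_0,u_0}(s,\eta(s))$, and note that splicing a minimizer reaching $\eta(s)$ at time $s$ with $\eta|_{[s,s+\delta]}$ gives, by monotonicity, $w(s+\delta)\leqslant w(s)+\int_s^{s+\delta}L(\eta(r),u(r),\dot\eta(r))\,dr$ with $u(r)\to w(s)$ as $\delta\to0^+$, so $\dot w(s)\leqslant L(\eta(s),w(s),\dot\eta(s))$ a.e.; integrating (using that $w$ is Lipschitz with $w(0)=u_0$) gives $w(t)\leqslant u_0+\int_0^tL(\eta,w,\dot\eta)\,ds$. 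Parts (i)--(ii) yield the implicit variational identity, and $\lim_{s\to0^+}u_\xi(s)=u_0$ is immediate.

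Uniqueness follows from a Gronwall estimate: if $\tilde h$ is continuous and also satisfies the identity, comparing $\tilde h$ and $h_{x_0,u_0}$ along minimizers of the respective ``frozen'' functionals and using $|L(\xi,\tilde h(s,\xi),\dot\xi)-L(\xi,h_{x_0,u_0}(s,\xi),\dot\xi)|\leqslant K\,|\tilde h(s,\xi)-h_{x_0,u_0}(s,\xi)|$ from (L3) forces $\tilde h\equiv h_{x_0,u_0}$. For the characterization of the minimizers, one first checks that any curve $\xi$ realizing the infimum in the identity also minimizes $J$: running the argument of (ii) with that $\xi$ forces $\dot w=L(\xi,w,\dot\xi)$ a.e., i.e.\ $w=u_\xi$ and $u_\xi(t)=h_{x_0,u_0}(t,x)$; hence Theorem~\ref{Herglotz_contact} applies, so $\xi$ is $C^2$, solves \eqref{eq:Herglotz}, and with $p=L_v(\xi,u_\xi,\dot\xi)$ and $u=u_\xi=h_{x_0,u_0}(\cdot,\xi(\cdot))$ the triple $(\xi,p,u)$ solves the Lie system \eqref{eq:contact} as in Theorem~\ref{Herglotz_contact}(2), with boundary data $\xi(0)=x_0$, $\xi(t)=x$, $\lim_{s\to0^+}u(s)=u_0$ built in. The main obstacle I anticipate is step (ii) and the regularity of $h_{x_0,u_0}$ it needs: one must make sure the spliced comparison curves are admissible (the Carathéodory equation along them having a solution on all of $[0,t]$, guaranteed by (L3) and the compactness of $M$, cf.\ the remark after \eqref{eq:app_fundamental_solution}), that all a priori estimates are uniform in the base point, and that the Lipschitz control of $w(s)=h_{x_0,u_0}(s,\eta(s))$ persists as $s\to0^+$, so that the differential inequality for $w$ and its integration down to $0$ are justified.
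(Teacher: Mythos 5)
You are proving a statement that the paper itself does not prove: Proposition \ref{WWY} is quoted from \cite{Wang-Wang-Yan1}, and what the paper establishes is only the link in the converse direction, namely Proposition \ref{eq:dyn_pro} (along any Herglotz minimizer $\xi$ one has $u_{\xi}(s)=h_{x_0,u_0}(s,\xi(s))$, via Gronwall and (L3)) together with the sketched transport to manifolds in Section 4.2. Your plan — define $h_{x_0,u_0}(t,x):=u_0+\inf_{\Gamma^t_{x_0,x}}J$ by the Herglotz principle and verify the implicit identity by dynamic programming — is therefore a genuinely different route, and its existence-and-identity half is essentially sound: your step (i) is exactly the mechanism of Proposition \ref{eq:dyn_pro} (restriction of minimizers plus monotonicity/uniqueness of the Carath\'eodory solution), and step (ii) is the standard sub-optimality inequality, with the boundary behaviour as $s\to0^+$ controllable by Lemma \ref{bound_u(t)} since $d(\eta(s),x_0)\leqslant Cs$.

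There are, however, two genuine gaps. First, you ``transport'' Proposition \ref{Lip} and Theorem \ref{Herglotz_contact} under (L1)--(L3) alone, but in this paper the a priori Lipschitz estimate rests on the Erdmann condition (Lemma \ref{Erdmann_condition}), which requires the extra growth hypothesis (L2'); compactness of $M$ does not supply (L2'), since it constrains growth in $v$ (it fails, e.g., for Lagrangians of exponential growth in $v$ that still satisfy (L1)--(L3)). Since the derivation of the Herglotz equation and hence Theorem \ref{Herglotz_contact} use Proposition \ref{Lip}, both your continuity argument for $h_{x_0,u_0}$ and the whole ``Moreover'' part (minimizing curves are $C^2$ and solve \eqref{eq:contact}) are not covered by the results you cite under the stated hypotheses — \cite{Wang-Wang-Yan1} obtains them by a different method; also ``uniform constants by compactness'' is not enough for the chart-by-chart reduction, because a minimizer need not stay in one chart (this is why the paper needs the localization and broken-geodesic argument of Proposition \ref{Main_Hergolotz} and \eqref{eq:conjunction}). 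Second, the uniqueness argument does not close as sketched: a Gronwall estimate in $\sup_{y\in M}|h(s,y)-\tilde h(s,y)|$ fails because both functions blow up (and are not uniformly close) as $s\to0^+$ away from $x_0$, while comparing along near-minimizers of the respective ``frozen'' functionals only yields one-sided inequalities along different curves, e.g.\ along a minimizer $\xi$ for $h$ one gets $\tilde h(s,\xi(s))-h(s,\xi(s))\leqslant K\int_0^s|\tilde h-h|(r,\xi(r))\,dr$, which by itself does not force the difference to vanish. Uniqueness of the continuous solution of the implicit relation is precisely where the substantive work of \cite{Wang-Wang-Yan1} lies and needs a genuine argument, not a one-line Gronwall.
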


\begin{proposition}\label{eq:dyn_pro}
	Let $x_0,x\in\R^n$, $t>0$ and $u_0\in\R$. For any $\xi\in\Gamma^t_{x_0,x}$ being a minimizer of \eqref{eq:app_fundamental_solution}, we denote by $u_{\xi}(s,u_0)$ the unique solution of \eqref{eq:app_caratheodory_L} with $u_{\xi}(0,u_0)=u_0$. Then, for any $0<t'<t$, the restriction of $\xi$ on $[0,t']$ is a minimizer for
  $$
  A(t',x_0,x,u_0):=u_0+\inf\int^{t'}_0L(\xi(s),u_{\xi}(s),\dot{\xi}(s))\ ds
  $$
  with $u_{\xi}$ the unique solution of \eqref{eq:app_caratheodory_L} restricted on $[0,t']$. Moreover,
  \begin{equation}\label{eq:implcite_2}
  	A(s,x_0,\xi(s),u_0)=u_{\xi}(s,u_0),\quad\forall s\in[0,t],
  \end{equation}
  and $A(s_1+s_2,x_0,\xi(s_1+s_2),u_0)=A(s_2,\xi(s_1),\xi(s_1+s_2),u_{\xi}(s_1))$ for any $s_1,s_2>0$ and $s_1+s_2\leqslant t$.

  In particular, if $h_{x_0,u_0}$ is from Proposition \ref{WWY}, then
  \begin{equation}\label{eq:equiv}
  	u_{\xi}(s)=h_{x_0,u_0}(s,\xi(s)),\quad s\in[0,t].
  \end{equation}
\end{proposition}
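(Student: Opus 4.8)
The plan is to establish the three internal assertions on $A$ first and then read off \eqref{eq:equiv} from them together with Proposition~\ref{WWY}.

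\emph{Dynamic programming and the Markov identity.} Fix a minimizer $\xi$ of \eqref{eq:app_fundamental_solution} on $[0,t]$ and $0<t'<t$. If $\xi|_{[0,t']}$ were not a minimizer for $A(t',x_0,\xi(t'),u_0)$, then, the infimum being attained by Proposition~\ref{existence}, there is $\gamma\in\Gamma^{t'}_{x_0,\xi(t')}$ with $u_\gamma(t')<u_\xi(t')$; glue $\gamma$ with $\xi|_{[t',t]}$ into $\tilde\xi\in\Gamma^t_{x_0,x}$. On $[0,t']$ one has $u_{\tilde\xi}=u_\gamma$, while on $[t',t]$ both $u_{\tilde\xi}$ and $u_\xi$ solve the Carath\'eodory equation with the common velocity $\dot\xi$, so $w:=u_\xi-u_{\tilde\xi}$ satisfies $\dot w=\widehat{L_u}\,w$ with $|\widehat{L_u}|\le K$ by (L3); hence $w(s)=w(t')\exp\bigl(\int_{t'}^s\widehat{L_u}\,dr\bigr)$ keeps the strict sign of $w(t')>0$, so $J(\tilde\xi)=u_{\tilde\xi}(t)-u_0<u_\xi(t)-u_0=J(\xi)$, a contradiction. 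The same gluing on $[0,s_1]$ and $[s_1,s_1+s_2]$ shows $\xi|_{[s_1,s_1+s_2]}$ is a minimizer for the problem with endpoints $\xi(s_1),\xi(s_1+s_2)$, time $s_2$ and initial datum $u_\xi(s_1)$. Integrating \eqref{eq:app_caratheodory_L} along such minimizers then gives both $A(s,x_0,\xi(s),u_0)=u_0+\int_0^sL(\xi,u_\xi,\dot\xi)\,d\tau=u_\xi(s)$, which is \eqref{eq:implcite_2}, and $A(s_2,\xi(s_1),\xi(s_1+s_2),u_\xi(s_1))=u_\xi(s_1+s_2)=A(s_1+s_2,x_0,\xi(s_1+s_2),u_0)$.

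\emph{Identification with $h_{x_0,u_0}$.} Write $g(s):=h_{x_0,u_0}(s,\xi(s))$; by \eqref{eq:implcite_2} it suffices to prove $g(s)=u_\xi(s)$ on $(0,t]$. For the lower bound, fix $s$ and let $\sigma^\ast$ be a curve realizing the infimum defining $h_{x_0,u_0}(s,\xi(s))$, which exists by Proposition~\ref{WWY}. Along $\sigma^\ast$ the third line of the Lie system \eqref{eq:contact}, combined with $p=L_v(\sigma^\ast(\cdot),h(\cdot,\sigma^\ast(\cdot)),\dot\sigma^\ast(\cdot))$ and the equality case of the Fenchel inequality $p\cdot\dot\sigma^\ast-H(\sigma^\ast,u,p)=L(\sigma^\ast,u,\dot\sigma^\ast)$, shows that $\tau\mapsto h(\tau,\sigma^\ast(\tau))$ solves \eqref{eq:app_caratheodory_L} along $\sigma^\ast$ with limit $u_0$ at $\tau=0^+$; uniqueness of Carath\'eodory solutions (Proposition~\ref{caratheodory} and (L3)) forces $h(\cdot,\sigma^\ast(\cdot))=u_{\sigma^\ast}$, hence
\[
g(s)=u_0+\int_0^sL(\sigma^\ast(\tau),u_{\sigma^\ast}(\tau),\dot\sigma^\ast(\tau))\,d\tau=u_0+J(\sigma^\ast)\ \ge\ A(s,x_0,\xi(s),u_0)=u_\xi(s).
\]
For the upper bound, inserting the admissible competitor $\xi|_{[0,s]}$ into the implicit equation for $h_{x_0,u_0}(s,\xi(s))$ gives $g(s)\le u_0+\int_0^sL(\xi,g,\dot\xi)\,d\tau$, while $u_\xi(s)=u_0+\int_0^sL(\xi,u_\xi,\dot\xi)\,d\tau$. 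Since $\xi$ is Lipschitz by Proposition~\ref{Lip}, the velocity $\dot\xi$ is bounded, and the lower bound already gives $w:=g-u_\xi\ge0$ on $(0,t]$, bounded there and tending to $0$ as $s\to0^+$. Subtracting and using (L3),
\[
0\le w(s)\le\int_0^s\widehat{L_u}(\tau)\,w(\tau)\,d\tau\le K\int_0^s w(\tau)\,d\tau,\qquad s\in(0,t],
\]
so Gronwall's inequality forces $w\equiv0$, i.e.\ $g=u_\xi$, which is \eqref{eq:equiv}.

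\emph{Where the difficulty lies.} The gluing arguments are routine once the comparison principle for \eqref{eq:app_caratheodory_L} under (L3) is available; the real content is the identification step. One must extract correctly from Proposition~\ref{WWY} that along a minimizer of the \emph{implicit} variational problem the function $h_{x_0,u_0}(\cdot,\sigma^\ast(\cdot))$ is genuinely the Carath\'eodory solution attached to $\sigma^\ast$ — this is what converts Herglotz's principle into the implicit one — and one must handle the behavior of $h_{x_0,u_0}(s,\cdot)$ as $s\to0^+$ with care (continuity of $h_{x_0,u_0}$ off $\{s=0\}$, boundedness of $g$ near $0$, and the limit value $u_0$) so that $\xi|_{[0,s]}$ is an admissible competitor in the implicit equation and the Gronwall comparison closes down to $s=0$.
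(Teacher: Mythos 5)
Your proposal is correct, and its first part (cut‑and‑paste plus the sign‑propagation of $w=u_\xi-u_{\tilde\xi}$ under $\dot w=\widehat{L_u}\,w$, then integration of \eqref{eq:app_caratheodory_L} to get \eqref{eq:implcite_2} and the Markov identity) is the same argument as the paper's, only with the comparison/monotonicity step spelled out where the paper compresses it into ``summing up the equalities''. Where you genuinely diverge is the identification \eqref{eq:equiv}. The paper writes down the two integral identities
\[
u_\xi(s)=u_0+\int_0^sL(\xi,u_\xi,\dot\xi)\,dr,\qquad
h_{x_0,u_0}(s,\xi(s))=u_0+\int_0^sL\bigl(\xi,h_{x_0,u_0}(\cdot,\xi(\cdot)),\dot\xi\bigr)\,dr,
\]
and applies (L3) and Gronwall to the absolute difference; the second identity (an exact equality along the Herglotz minimizer $\xi$, i.e.\ in effect that $\xi$ also minimizes the implicit problem of Proposition~\ref{WWY}) is simply taken as known from the implicit variational principle. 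You avoid asserting that equality: you get the inequality $h_{x_0,u_0}(s,\xi(s))\leqslant u_0+\int_0^sL(\xi,h,\dot\xi)\,dr$ for free because the Lipschitz curve $\xi|_{[0,s]}$ (Proposition~\ref{Lip}) is an admissible competitor, and you recover the matching lower bound by taking a minimizer $\sigma^\ast$ of the implicit problem, converting the ``Moreover'' part of Proposition~\ref{WWY} (third Lie equation of \eqref{eq:contact} plus the Fenchel equality and Carath\'eodory uniqueness) into the statement $h(\cdot,\sigma^\ast(\cdot))=u_{\sigma^\ast}$, and then comparing with $A(s,x_0,\xi(s),u_0)=u_\xi(s)$ from \eqref{eq:implcite_2}; a one‑sided Gronwall (legitimate since you first secure $w\geqslant0$, boundedness, and $w\to0$ as $s\to0^+$) closes the argument. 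What each route buys: the paper's version is shorter but leans on importing the exact integral identity for $h$ along $\xi$ from \cite{Wang-Wang-Yan1}, which is close to the equivalence being proved; your version uses only what Proposition~\ref{WWY} literally states (attainment plus the characteristic system and the initial limit), at the price of the extra two‑sided bookkeeping. Both are sound; yours is the more self‑contained reading of the same statement.
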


\begin{remark}
The relation \eqref{eq:equiv} holds only when $\xi$ is a minimizer of \eqref{eq:app_fundamental_solution}. 	
\end{remark}

\begin{proof}
		Suppose $x_0,x\in\R^n$, $t>0$ and $u_0\in\R$. Let $\xi\in\Gamma^t_{x_0,x}$ be a minimizer of \eqref{eq:app_fundamental_solution} and $u_{\xi}(s)=u_{\xi}(s,u_0)$ be the unique solution of \eqref{eq:app_caratheodory_L} with $u_{\xi}(0)=u_0$.
	
	Now, let $0<t'<t$. Let $\xi_1\in\Gamma^{t'}_{x,\xi(t')}$ and $\xi_2\in\Gamma^{t-t'}_{\xi(t'),y}$ be the restriction of $\xi$ on $[0,t']$ and $[t',t]$ respectively. Then, we have that
	\begin{align*}
		u_{\xi}(t',u_0)=&\,u_0+\int^{t'}_0L(\xi_1(s),u_{\xi_1}(s),\dot{\xi_1}(s))\ ds,\\
		u_{\xi}(t,u_0)-u_{\xi}(t',u_0)=&\,\int^t_{t'}L(\xi_2(s),u_{\xi_2}(s),\dot{\xi_2}(s))\ ds.
	\end{align*}
	Then both $\xi_1$ and $\xi_2$ are minimal curve for \eqref{eq:app_fundamental_solution} restricted on $[0,t']$ and $[t',t]$ respectively by summing up the equalities above and the assumption that $\xi$ is a minimizer of \eqref{eq:app_fundamental_solution}. In particular, \eqref{eq:implcite_2} follows. The next assertion is direct from the relation
	$$
	u_{\xi}(s_1+s_2,u_0)=u_{\xi}(s_2,u_{\xi}(s_1)),\quad \forall s_1,s_2>0,\ s_1+s_2\leqslant t,
	$$
	since $u_{\xi}$ solves \eqref{eq:app_caratheodory_L}. The last assertion is a direct application of Gronwall's inequality. Indeed, we know that for all $s\in[0,t]$,
	\begin{align*}
		u_{\xi}(s)=&\,u_0+\int^s_0L(\xi(r),u_{\xi}(r),\dot{\xi(r)})\ dr,\\
		h_{x_0,u_0}(s,\xi(s))=&\,u_0+\int^s_0L(\xi(r),h_{x_0,u_0}(r,\xi(r)),\dot{\xi}(r))\ dr.
	\end{align*}
	By condition (L3), it follows that
	\begin{align*}
		|h_{x_0,u_0}(s,\xi(s))-u_{\xi}(s)|\leqslant K\int^s_0|h_{x_0,u_0}(r,\xi(r))-u_{\xi}(r)|\ dr.
	\end{align*}
	Our conclusion is a consequence of Gronwall's inequality.
\end{proof}

\subsection{Herglotz' generalized variational principle on manifolds}

Now, we try to explain how to move the Herglotz' generalized variational principle to any connected and closed smooth manifold $M$.

Fix $x,y\in M$, $t>0$ and $u\in\R$. Let $\xi\in\Gamma^t_{x,y}(M)$, we consider the Carath\'eodory equation
\begin{equation}\label{eq:app_caratheodory_L2}
	\begin{cases}
		\dot{u}_{\xi}(s)=L(\xi(s),u_{\xi}(s),\dot{\xi}(s)),\quad a.e.\ s\in[0,t],&\\
		u_{\xi}(0)=u.&
	\end{cases}
\end{equation}
We define the action functional
\begin{equation}\label{eq:app_fundamental_solution2}
	J(\xi):=\int^t_0L(\xi(s),u_{\xi}(s),\dot{\xi}(s))\ ds,
\end{equation}
where $\xi\in\Gamma^t_{x,y}(M)$ and $u_{\xi}$ is defined in \eqref{eq:app_caratheodory_L}. Our purpose is to minimize $J(\xi)$ over
\begin{align*}
	\mathcal{A}(M)=\{\xi\in\Gamma^t_{x,y}(M): \text{\eqref{eq:app_caratheodory_L2} admits an absolutely continuous solution $u_{\xi}$}\}.
\end{align*}
Notice that $\mathcal{A}(M)\not=\varnothing$ because it contains all piecewise $C^1$ curves connecting $x$ to $y$. In view of the remark before Lemma \ref{u_low_bound}, for each $a\in\R$,
\begin{align*}
	\mathcal{A}(M)=\{\xi\in\Gamma^t_{x,y}(M): \text{the function $s\mapsto L(\xi(s),a,\dot{\xi}(s))$ belongs to $L^1([0,t])$}\}.
\end{align*}

We begin with the case when $M=\R^n$. Fix $\kappa>0$. Suppose $0<t\leqslant 1$, $x,y\in\R^n$ such that $|x-y|\leqslant\kappa t$. Suppose $\eta\in\mathcal{A}(\R^n)$ is a minimizer of the action functional $\eta\mapsto J(\eta)$. Invoking the aforementioned {\em a priori} estimates, $\eta$ is as smooth as $L$. Moreover, there exist constants $C_1(\kappa)>0,C_2(u,t,\kappa)>0$ such that
\begin{align*}
	\sup_{s\in[0,t]}|\dot{\eta}(s)|\leqslant C_1(\kappa),\quad \sup_{s\in[0,t]}|\eta(s)-x|\leqslant C_1(\kappa)t,\quad \sup_{s\in[0,t]}|u_{\eta}(s)|\leqslant C_2(u,t,\kappa).
\end{align*}
Let $D_1=B_{\R^n}(x,\kappa t)$ and $D_2=B_{\R^n}(x,(C_1(\kappa)+1)t)$, where the subscript is used for the ball in $\R^n$. Then, $D_1\subset D_2$ since $\kappa\leqslant C_1(\kappa)$.
By denoting
\begin{align*}
	\mathcal{B}(\R^n)=\{\eta\in\mathcal{A}(\R^n): \text{$\eta(s)\in D_2$ for all $s\in[0,t]$}\}.
\end{align*}
Therefore we can claim that for any $x\in\R^n$ and $y\in D_1$ the following problems are equivalent:
\begin{align*}
	\inf_{\mathcal{A}(\R^n)}J(\xi)=\inf_{\mathcal{B}(\R^n)}J(\xi)
\end{align*}
They admit the same minimizers.

Now we move to the manifold case. Let $\{(B_i,\Phi_i)\}$ be a local chart for the $C^2$ closed manifold $M$. We can suppose that $\{B_i\}_{i=1}^N$ is a finite open cover of $M$ and $\Phi_i:B_i\to D_2\subset\R^n$ is a $C^2$-diffeomorphism for each $i=1,\ldots,N$ and $\Phi_j^{-1}\circ\Phi_i:B_i\cap B_j\to B_i\cap B_j$ is a $C^2$-diffeomorphism for each $i\not=j=1,\ldots,N$.

Fix $i$, let $B=B_i$ and $\Phi=\Phi_i:B\to D_2$ be a local coordinate. Let $L:TM\times\R\to\R$ be a Lagrangian satisfying (L1)-(L3). Then
\begin{align*}
	(\Phi,d\Phi):TB\to D_2\times\R^n
\end{align*}
defines a local trivilazation of $TB$. Let $L_{\Phi}:D_2\times\R^n\times\R\to\R$ be defined by
\begin{align*}
	L_{\Phi}(\bar{x},u,\bar{v})=L(\Phi^{-1}(\bar{x}),u,d\Phi^{-1}(\bar{x})\bar{v}),\quad (\bar{x},\bar{v})\in D_2\times\R^n,\ u\in\R.
\end{align*}
Therefore, Herglotz' generalized variational principle for $L$ restricted to $TB\times\R$ is equivalent to the one for $L_{\Phi}$ on $D_2\times\R^n\times\R\to\R$ since $\Phi$ is a bi-Lipschitz homeomorphism and a $C^2$-diffeomorphism.

\begin{proposition}\label{Main_Hergolotz}
	Fix $\kappa>0$, $0<t\leqslant 1$. Then there exist a local chart $\{(B_i,\Phi_i)\}_{i=1}^N$ and a constant $C_2(\kappa)>0$ such that each $B_i\subset B_M(x,C_2(\kappa)t)$, and for any $x,y\in B_i$ and $u\in\R$, the following points on the Hergolotz's generalized variational principle hold:
	\begin{enumerate}[\rm (a)]
	\item The functional
	\begin{align*}
		\mathcal{A}(B_i)\ni\xi\mapsto J(\xi)=\int^t_0L(\xi(s),u_{\xi}(s),\dot{\xi}(s))\ ds,
	\end{align*}
	where $u_{\xi}$ is determined by \eqref{eq:app_caratheodory_L} with initial condition $u_{\xi}(0)=u$, admits a minimizer in $\mathcal{A}(M)$.
	\item Suppose $x,y\in B_i$. Let $\xi\in\mathcal{A}(B_i)$ be a minimizer of $J$. Then there exists a function $F=F_{B_i}:[0,+\infty)\times[0,+\infty)\to[0,+\infty)$, with $F(\cdot,r)$ being nondecreasing for any $r\geqslant0$, such that
	\begin{equation*}%\label{eq:u_bound}
		|u_{\xi}(s)|\leqslant tF(t,\kappa)+C(t)|u|,\quad s\in[0,t]
	\end{equation*}
	where $C(t)>0$ is also nondecreasing in $t$.
	\item Suppose $x,y\in B_i$. Let $\xi\in\mathcal{A}(B_i)$ be a minimizer of $J$. Then, there exists a function $F=F_{u,B_i}:[0,+\infty)\times[0,+\infty)\to[0,+\infty)$, with $F(\cdot,r)$ is nondecreasing for any $r\geqslant0$, such that
	\begin{align*}
		\operatorname*{ess\ sup}_{s\in[0,t]}|\dot{\xi}(s)|\leqslant F(t,\kappa).
	\end{align*}
	\item We have the following regularity properties for any minimizer $\xi$ for \eqref{eq:app_fundamental_solution2}:
	\begin{enumerate}[\rm 1)]
	\item Both $\xi$ and $u_{\xi}$ are of class $C^2$ and $\xi$ satisfies Herglotz equation \eqref{eq:Herglotz} in local charts for all $s\in[0,t]$ where $u_{\xi}$ is the unique solution of \eqref{eq:app_caratheodory_L};
	\item Let $p(s)=L_v(\xi(s),u_{\xi}(s),\dot{\xi}(s))$ be the dual arc. Then $p$ is also of class $C^2$ and we conclude that $(\xi,p,u_{\xi})$ satisfies Lie equation \eqref{eq:contact} in local charts for all $s\in[0,t]$.
\end{enumerate}
\end{enumerate}
\end{proposition}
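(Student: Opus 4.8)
\medskip

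\noindent\textbf{Strategy.} The plan is to reduce each of the four assertions, chart by chart, to the corresponding Euclidean result of Sections~2 and~3 applied to the Lagrangian pushed forward by a local coordinate. The underlying observation is that, since $M$ is compact and $L$ satisfies (L1)--(L3) on $TM$, a short competitor joining two points at distance of order $t$ has action of order $t$; hence $u_\xi(t)-u_0=J(\xi)=O(t)$ for any (almost) minimizer $\xi$, and by the intrinsic superlinearity $\int_0^t|\dot\xi|_g\,ds=O(t)$ as well, so that such a curve stays in a metric ball of radius of order $t$ about its starting point. Thus, \emph{after} $\kappa$ and $t\le1$ have been fixed, I would build the finite atlas $\{(B_i,\Phi_i)\}_{i=1}^N$ so that $\Phi_i:B_i\to D_2$ is a $C^2$-diffeomorphism onto a fixed ball $D_2\subset\R^n$, $\operatorname{diam}_M(B_i)\le C_2(\kappa)t$, and the confinement ball about the starting point of any admissible competitor still lies in $B_i$; then both the competitor and every minimizer of $J$ over $\mathcal A(M)$ with endpoints $x,y\in B_i$ actually live in $B_i$. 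This is exactly the confinement argument already carried out above for $M=\R^n$.

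\noindent\textbf{Transfer of the hypotheses.} For each $i$ put $L_{\Phi_i}(\bar x,u,\bar v)=L(\Phi_i^{-1}(\bar x),u,d\Phi_i^{-1}(\bar x)\bar v)$ as in the discussion preceding the statement. Since $\Phi_i$ and $\Phi_i^{-1}$ are $C^2$ with first derivatives bounded above and below on the compact closures, and since the change of coordinates does not touch the $u$-variable, $L_{\Phi_i}$ again satisfies \mbox{\rm (L1)}, \mbox{\rm (L3)} with the same constant $K$, and \mbox{\rm (L2)}, \mbox{\rm (L2')} with $\overline{\theta}_0,\theta_0$ replaced by suitable dilates $r\mapsto\overline{\theta}_0(\Lambda_i r)$ and $r\mapsto\theta_0(\lambda_i r)$, which are still superlinear and nondecreasing, and with the polynomial-type growth in \mbox{\rm (L2')} preserved. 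Enlarging the relevant constants, these may be taken uniform in $i=1,\dots,N$, at the price of dependence on $t$ and $\kappa$, which the statement allows.

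\noindent\textbf{The four assertions.} Fix $i$ and $x,y\in B_i$, set $\bar x=\Phi_i(x)$, $\bar y=\Phi_i(y)$, and observe that $\xi\mapsto\Phi_i\circ\xi$ identifies curves in $B_i$ with curves in $D_2$, preserving the Carath\'eodory equation \eqref{eq:app_caratheodory_L2} and the value of the action (with $L$ replaced by $L_{\Phi_i}$), because $\Phi_i$ is a bi-Lipschitz $C^2$-diffeomorphism. Assertion~(a) then follows from the compactness argument of Proposition~\ref{existence} for $L_{\Phi_i}$ applied to a minimizing sequence which, by the confinement above, may be taken inside $B_i$; the cluster point lies in $\mathcal A(B_i)\subset\mathcal A(M)$ and realizes $\inf_{\mathcal A(B_i)}J=\inf_{\mathcal A(M)}J$. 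Assertion~(b) follows from Lemma~\ref{bound_u(t)} and Lemma~\ref{bound_u} for $L_{\Phi_i}$ with $R$ equal to $\operatorname{diam}_M(B_i)$ times the bi-Lipschitz constant, since $u_\xi$ is unchanged. Assertion~(c) follows because Erdmann's condition (Lemma~\ref{Erdmann_condition}) survives the change of coordinates --- the reparametrizations used in its proof keep the curve in $B_i$ --- so the argument of Proposition~\ref{Lip}, which uses only Erdmann's condition together with the bounds of (b) and of Corollary~\ref{bound_1}, yields the speed bound for $\Phi_i\circ\xi$, hence for $\xi$. Assertion~(d) follows from Theorem~\ref{Herglotz_contact} for $L_{\Phi_i}$: the free variations $\xi+\lambda\eta$ used to derive the Herglotz equation \eqref{eq:Herglotz} remain in $B_i$ for $\lambda$ small because a minimizer has range at positive distance from $\partial B_i$, $C^2$ regularity of $\xi$, of $u_\xi$ and of the dual arc is a local and coordinate-free property, and \eqref{eq:Herglotz}--\eqref{eq:contact} are then read in the coordinate $\Phi_i$.

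\noindent\textbf{Main obstacle.} The only non-routine point is the geometric bookkeeping around the atlas: it has to be chosen \emph{after} $\kappa$ and $t$, its finitely many charts must be taken with uniform bi-Lipschitz constants, and --- most delicately --- the confinement radius coming from the a priori estimates of Corollary~\ref{bound_1} and Proposition~\ref{Lip}, which is of order $t$ but with an implied constant depending on $\kappa$ (and on $u$), must be kept small relative to the charts, so that every minimizer is trapped in a single chart and the Euclidean results apply verbatim. Once this is in place, (a)--(d) are simply Proposition~\ref{existence}, Lemma~\ref{bound_u}, Proposition~\ref{Lip} and Theorem~\ref{Herglotz_contact} written in local coordinates.
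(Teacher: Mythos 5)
Your proposal is correct and follows essentially the same route as the paper, which offers no separate formal proof of this proposition but justifies it by exactly this scheme: the Euclidean confinement argument (endpoints at distance of order $\kappa t$ force minimizers into a ball $D_2$ of radius of order $t$, so minimizing over $\mathcal{A}$ and over curves confined to $D_2$ is the same), the pushed-forward Lagrangian $L_{\Phi_i}$ on $D_2\times\R^n\times\R$, and then an appeal to Proposition \ref{existence}, Lemmas \ref{bound_u(t)}--\ref{bound_u}, Lemma \ref{Erdmann_condition} with Proposition \ref{Lip}, and Theorem \ref{Herglotz_contact} read in the chart. Your explicit verification that (L1), (L2), (L2'), (L3) survive the coordinate change (with dilated $\theta$'s and the same $K$) and your bookkeeping of the chart size versus the confinement radius are precisely the details the paper leaves implicit.
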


Thus, by using $L_{\Phi}$, it is not difficult to see that there exists a finer open cover, which we also denote by $\{(B_i,\Phi_i)\}_{i=1}^N$, such that the Herglotz' generalized variational principle can be applied in the case when $x,y\in B_i$ and $0<t\leqslant 1$ ($i=1,\ldots,N$) since $\{\Phi_i\}_{i=1}^N$ is equi-bi-Lipschitz.

Now, let us recall the standard ``broken geodesic'' argument. Pick any $x,y\in M$, $t>0$ and $u\in\R$. Let $\{(B_i,\Phi_i)\}_{i=1}^N$  be the local chart in the proposition above. We suppose without loss of generality that $x\in B_1$ and $y\in B_N$. Let $\xi\in\mathcal{A}(M)$. Then there exists a partition $0=t_0<t_1<t_2<\cdots<t_{k-1}<t_k=t$ such that $z_j=\xi(t_j)$ and $z_{j+1}=\xi(t_{j+1})$ are contained in the same $B_i$. For each $j$, we define
\begin{align*}
	h_L^j(t_{j+1}-t_j,z_j,z_{j+1},u_j)=\inf_{\xi_j}\int^{t_{j+1}}_{t_j}L(\xi_j(s),u_{\xi_j}(s),\dot{\xi}_j(s))\ ds,
\end{align*}
where $\xi_j$ is an absolutely continuous curve constrained in $B_i$ connecting $z_j$ to $z_{j+1}$ and $u_{\xi_j}$ is uniquely determined by \eqref{eq:app_caratheodory_L} with initial condition $u_j$. Now we consider the problem
\begin{equation}\label{eq:conjunction}
	g(t,x,y,u):=\inf\sum^k_{j=1}h_L^j(t_{j+1}-t_j,z_j,z_{j+1},u_j),
\end{equation}
where the infimum is taken over any partition $0=t_0<t_1<t_2<\cdots<t_{k-1}<t_k=t$, $z_j,z_{j+1}\in M$ contained in the same $B_i$ and $u_j\in\R$. Due to Proposition \ref{Main_Hergolotz} (b), $\{u_j\}$ can be constrained in a compact subset of $\R$ depending on $u$, $x,y$ and $t$. Therefore the infimum in \eqref{eq:conjunction} can be attained. Thanks to the local semiconcavity of the fundamental solution $h_L^j$, $h_L^j$ is differentiable at each minimizer which leads to the fact
\begin{align*}
	h_L(t,x,y,u)=g(t,x,y,u).
\end{align*}

\begin{proposition}\label{Main_Hergolotz2}
    Proposition \ref{Main_Hergolotz} holds for any connected and closed $C^2$ manifold $M$.
\end{proposition}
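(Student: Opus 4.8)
The plan is to derive the global statement from the local one, Proposition~\ref{Main_Hergolotz}, via the ``broken geodesic'' reduction prepared above together with a direct compactness argument carried out intrinsically on $M$. \textbf{Step 1 (a finite adapted atlas).} First I would fix a reference $C^2$ Riemannian metric on the compact manifold $M$ and, shrinking the charts $\{(B_i,\Phi_i)\}_{i=1}^N$ if necessary, arrange that $\{\Phi_i\}_{i=1}^N$ is equi-bi-Lipschitz onto $D_2$ and that the cover admits a Lebesgue number $\rho>0$, so that any two points of $M$ at distance $<\rho$ lie in a common $B_i$; this is possible since $M$ is compact and the transition maps $\Phi_j^{-1}\circ\Phi_i$ are $C^2$ on compact sets, hence bi-Lipschitz with uniform constants. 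All constants below are allowed to depend on this atlas.

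\textbf{Step 2 (existence of a global minimizer --- part (a)).} I would run the direct method exactly as in the proof of Proposition~\ref{existence}, transplanted to $M$. A fixed piecewise $C^1$ competitor shows $\inf_{\mathcal A(M)}J<+\infty$. For a minimizing sequence $\{\xi_k\}$, the bounds of Lemma~\ref{u_low_bound} and Lemma~\ref{bound_u} adapt with no change --- their proofs use only \eqref{eq:app_caratheodory_L} and conditions (L2)--(L3), the straight-line competitor being replaced by a minimizing geodesic of length $\le\mathrm{diam}(M)$ --- so $\|u_{\xi_k}\|_\infty\le C_1$. Then (L2) gives $\theta_0(|\dot{\xi}_k|)\le L(\xi_k,u_{\xi_k},\dot{\xi}_k)+c_0+KC_1$ in any chart, whence the de la Vall\'ee Poussin argument of Lemma~\ref{equi_integrable} yields equi-integrability of $\{\dot{\xi}_k\}$. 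Ascoli--Arzel\`a (for curves into the compact $M$) and the Dunford--Pettis theorem then produce a subsequence converging uniformly to some $\xi_\infty\in\mathcal A(M)$ with weakly-$L^1$ convergent derivatives, and $\xi_\infty$ is a minimizer: lower semicontinuity is applied chart by chart on a partition $0=s_0<\cdots<s_m=t$ chosen (using the uniform continuity of $\xi_\infty$ and the Lebesgue number $\rho$) so that each $\xi_\infty([s_{j-1},s_j])$ lies in one $B_i$, invoking there the classical semicontinuity result quoted in the proof of Proposition~\ref{existence} for $L_{\Phi_i}$.

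\textbf{Step 3 (localization and transfer of the estimates --- parts (b)--(d)).} Given a global minimizer $\xi$, the cut-and-paste argument of Proposition~\ref{eq:dyn_pro} shows that for every $[a,b]\subset[0,t]$ with $\xi([a,b])$ contained in a single $B_i$ the restriction $\xi|_{[a,b]}$ minimizes the local problem in $B_i$ with initial datum $u_\xi(a)$. The rough length bound already available from Step~2 (a minimizer lies in every $\mathcal A_\varepsilon$, so $\int_0^t|\dot{\xi}|$ is controlled, as in Corollary~\ref{bound_1}) guarantees that such intervals exist around every point of $[0,t]$, that the chart constraint ``$\eta(s)\in B_i$'' is inactive on them (the preimage $\Phi_i^{-1}(D_1)$ already contains $\xi|_{[a,b]}$), and that the endpoints of each piece are close relative to the elapsed time, so that Proposition~\ref{Main_Hergolotz}(b)--(d) applies with a uniform value of $\kappa$. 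Part (b) follows by chaining the estimate of Proposition~\ref{Main_Hergolotz}(b) over consecutive pieces: the multiplicative constant there is of the form $C(s)=O(e^{Ks})$, so the products and sums over any number of pieces spanning $[0,t]$ stay bounded by $tF(t,\kappa)+C(t)|u|$ (equivalently, re-run Lemma~\ref{bound_u} directly on $M$). Part (c) follows from Proposition~\ref{Main_Hergolotz}(c) applied on each piece, taking the maximum over the finitely many charts. For part (d), every interior point of $[0,t]$ has a neighbourhood $[a,b]$ on which $\xi$ is a local minimizer, hence $C^2$ and a solution of the Herglotz equation \eqref{eq:Herglotz} and the Lie equation \eqref{eq:contact} in that chart by Proposition~\ref{Main_Hergolotz}(d) (equivalently Theorem~\ref{Herglotz_contact}); patching over $[0,t]$ and treating $0$ and $t$ with one-sided subintervals gives the regularity on the whole of $[0,t]$, while the identity $h_L(t,x,y,u)=g(t,x,y,u)$ established above shows that the minimizer so constructed realizes the fundamental solution.

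\textbf{Main obstacle.} The delicate point is Step~3: one must ensure that short arcs of a global minimizer are genuine minimizers of the \emph{unconstrained} local problems of Proposition~\ref{Main_Hergolotz}, i.e.\ that the chart constraint never binds and that the smallness conditions ``$0<t\le1$'' and ``endpoints $\le\kappa t$ apart'' hold uniformly along the whole minimizer. This rests precisely on the a priori length bound coming from near-minimality together with the coercivity (L2), which jointly make the parameter $\kappa$ of Proposition~\ref{Main_Hergolotz} available with a single value on all pieces; once this is secured, the rest is bookkeeping over the finite atlas.
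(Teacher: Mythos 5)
Your overall architecture --- run the direct method globally on $M$, then localize the minimizer into charts and import the Euclidean results --- is genuinely different from the paper's. The paper never localizes a global minimizer: it assembles the global object by concatenating the local fundamental solutions $h_L^j$ of Proposition \ref{Main_Hergolotz}, minimizes over the junction points $z_j$ and junction values $u_j$ in \eqref{eq:conjunction} (attainment coming from the compactness of the admissible $u_j$ guaranteed by Proposition \ref{Main_Hergolotz}(b)), and then uses local semiconcavity and differentiability of $h_L^j$ at a minimizing configuration to remove the corners, which yields $h_L=g$ and transfers all four statements to arbitrary $t>0$ and endpoints. Your Steps 1--2 and the regularity part of Step 3 are sound at the same level of detail as the paper: restrictions of a global minimizer to subintervals are unconstrained minimizers by the dynamic-programming argument of Proposition \ref{eq:dyn_pro}, absolute continuity of $\int|\dot\xi|\,ds$ provides, around every parameter value, a subinterval whose image is compactly contained in a chart, and there Theorem \ref{Herglotz_contact} gives $C^2$ regularity and the Herglotz and Lie equations; patching gives (a), (d) and the qualitative part of (b).

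The genuine gap is the point you flag but then claim to have resolved: the assertion that the pieces can be chosen with a single value of $\kappa$, i.e.\ $d(\xi(a),\xi(b))\leqslant\kappa(b-a)$ on all small subintervals, ``because of the a priori length bound and (L2)''. A bound on $\int_0^t|\dot\xi|\,ds$, even with equi-integrability, does not control the averages $\frac1{b-a}\int_a^b|\dot\xi|\,ds$ on short subintervals; that $L^1$-to-$L^\infty$ upgrade is exactly what the Erdmann argument of Lemma \ref{Erdmann_condition} and Proposition \ref{Lip} is designed to produce, so invoking Proposition \ref{Main_Hergolotz}(c) piecewise with a uniform $\kappa$ is circular (piecewise application with piece-dependent $\kappa_j$ only gives finiteness of the essential supremum, not the quantitative bound $F(t,\kappa)$). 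Similarly, chaining Proposition \ref{Main_Hergolotz}(b) over $k$ pieces multiplies factors bounded below by $1+K$ per piece, so the constants do not stay of the form $tF(t,\kappa)+C(t)|u|$ as the partition is refined. The fix is to prove the quantitative statements intrinsically rather than by chaining: the proofs of Lemmas \ref{u_low_bound} and \ref{bound_u} and of Corollary \ref{bound_1} use only the Carath\'eodory equation, (L2)--(L3) and one competitor curve (a geodesic on $M$), and the Erdmann reparametrization $\eta(\tau)=\xi(s(\tau))$ never leaves $M$, so Lemma \ref{Erdmann_condition} and Proposition \ref{Lip} carry over verbatim in the equi-bi-Lipschitz charts; with those global versions in hand, your Step 3 closes and parts (b)--(c) hold with constants depending only on $t$, the distance of the endpoints, $u$ and the data. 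Alternatively, one can follow the paper's broken-extremal construction, in which each piece is by construction a minimizer of a local problem to which Proposition \ref{Main_Hergolotz} applies, so the uniformity issue never arises.
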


\subsection{Further remarks}

Comparing to the method used in \cite{Wang-Wang-Yan1,Wang-Wang-Yan2}, one can see more from our approach  as follows:
\begin{enumerate}[--]
	\item We can derive the generalized Euler-Lagrange equations in a modern and rigorous way which does not appear in both \cite{Wang-Wang-Yan1,Wang-Wang-Yan2};
	\item There should be an extension of the main results of this paper under much more general conditions (like Osgood type conditions) to guarantee the existence and uniqueness of the solutions of the associated Carath\'eodory equation \eqref{eq:app_caratheodory_L}.
	\item Along this line, the quatitative semiconcavity and convexity estimate of the associated fundamental solutions have been obtained in \cite{Cannarsa-Cheng-Yan} recently, which is useful for the intrinsic study of the global propagation of singularities of the viscosity solutions of \eqref{eq:static_intro} and \eqref{eq:evolutionary_intro} (\cite{Cannarsa-Cheng1,Cannarsa-Cheng3,CCF,CCMW});
	\item When the Lagrangian has the form $L(x,v)-\lambda u$, by solving the associated Carath\'eodory equation \eqref{eq:app_caratheodory_L} directly, one gets the representation formula for the associated viscosity solutions immediately (\cite{DFIZ,Su-Wang-Yan,Zhao-Cheng,Chen-Cheng-Zhang}). The representation formula bridges the PDE aspects of the problem with the dynamical ones;
	\item Consider a family of Lagrangians in the form $\{\mathbf{L}(x,v)+\sum_{i=1}^ka_{ij}u_i\}$, a problem of  Herglotz' variational principle in the vector form is closely connected to certain stochastic model of weakly coupled Hamilton-Jacobi equations (see, for instance, \cite{Davini-Zavidovique,Figalli-Gomes-Marcon,Mitake-Siconolfi-Tran-Yamada}).
\end{enumerate}

\begin{acknowledgement}
This work is partly supported by Natural Scientific Foundation of China (Grant No.11871267, No.11631006, No.11790272 and No.11771283), and the National Group for Mathematical Analysis, Probability and Applications (GNAMPA) of the Italian Istituto Nazionale di Alta Matematica ``Francesco Severi''. Kaizhi Wang is also supported by China Scholarship Council (Grant No. 201706235019). The authors acknowledge the MIUR Excellence Department Project awarded to the Department of Mathematics, University of Rome Tor Vergata, CUP E83C18000100006. The authors are grateful to Qinbo Chen, Cui Chen and Kai Zhao for helpful discussions. This work was motivated when the first two authors visited Fudan University in June 2017. The authors also appreciate the anonymous referee for helpful suggestion to improve the paper.
\end{acknowledgement}

\section*{Appendix}
\addcontentsline{toc}{section}{Appendix}

Let $\Omega\subset\R^{n+1}$ be an open set. A function $f:\Omega\subset\R\times\R^n\to\R^n$ is said to satisfy {\em Carath\'eodory condition} if
\begin{itemize}[-]
  \item for any $x\in\R^n$, $f(\cdot,x)$ is measurable;
  \item for any $t\in\R$, $f(t,\cdot)$ is continuous;
  \item for each compact set $U$ of $\Omega$, there is an integrable function $m_U(t)$ such that
  $$
  |f(t,x)|\leqslant m_U(t),\quad (t,x)\in U.
  $$
\end{itemize}
A classical problem is to find an absolutely continuous function $x$ defined on a real interval $I$ such that $(t,x(t))\in\Omega$ for $t\in I$ and satisfies the following Carath\'eodory equation
\begin{equation}\label{eq:caratheodory_system}
	\dot{x}(t)=f(t,x(t)),\quad a.e., t\in I.
\end{equation}
%\eqref{eq:caratheodory_system}.

\begin{proposition}[Carath\'eodory]\label{caratheodory}
	If $\Omega$ is an open set in $\R^{n+1}$ and $f$ satisfies the Carath\'eodory conditions on $\Omega$, then, for any $(t_0, x_0)$ in $\Omega$, there is a solution of \eqref{eq:caratheodory_system} through $(t_0, x_0)$. Moreover, if the function $f(t,x)$ is also locally Lipschitzian in $x$ with a measurable Lipschitz function, then the uniqueness property of the solution remains valid.
\end{proposition}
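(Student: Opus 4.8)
The plan is to recast the differential equation as an integral equation and then combine a compactness argument (for existence) with Gronwall's inequality (for uniqueness). Fixing $(t_0,x_0)\in\Omega$, I would first select constants $a,b>0$ so that the cylinder $R=\{(t,x):|t-t_0|\leqslant a,\ |x-x_0|\leqslant b\}$ is contained in $\Omega$; the Carath\'eodory conditions then furnish an integrable $m$ with $|f(t,x)|\leqslant m(t)$ on $R$, and by shrinking $a$ one may arrange $\int_{t_0-a}^{t_0+a}m(s)\,ds\leqslant b$. An absolutely continuous $x$ solves \eqref{eq:caratheodory_system} with $x(t_0)=x_0$ if and only if it solves $x(t)=x_0+\int_{t_0}^{t}f(s,x(s))\,ds$, so it suffices to produce a solution of the latter taking values in $R$.

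For existence I would use the classical retarded (delayed-argument) approximations. For each integer $j\geqslant1$ define $x_j$ on $[t_0,t_0+a]$ by setting $x_j(t)=x_0$ for $t\in[t_0,t_0+a/j]$ and $x_j(t)=x_0+\int_{t_0}^{t-a/j}f(s,x_j(s))\,ds$ for $t\in[t_0+a/j,t_0+a]$, with the interval to the left of $t_0$ treated symmetrically. Because the integrand at time $t$ depends only on the already-constructed values of $x_j$ on an earlier interval, each $x_j$ is well defined by recursion over successive subintervals of length $a/j$, takes values in $R$, and has increments controlled by $m$ in the sense that $|x_j(t')-x_j(t)|\leqslant\int_{t-a/j}^{t'-a/j}m(s)\,ds$ for $t_0+a/j\leqslant t\leqslant t'$. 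The absolute continuity of $t\mapsto\int_{t_0}^{t}m(s)\,ds$ turns this bound into a modulus of continuity independent of $j$, so $\{x_j\}$ is uniformly bounded and equicontinuous; by the Arzel\`a--Ascoli theorem a subsequence $\{x_{j_k}\}$ converges uniformly to some continuous $x$ with values in $R$.

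The crux of the existence argument, and the step I expect to demand the most care, is passing to the limit inside the integral. For almost every $s$ the continuity of $f(s,\cdot)$ gives $f(s,x_{j_k}(s))\to f(s,x(s))$, while $|f(s,x_{j_k}(s))|\leqslant m(s)$ supplies an integrable dominating function; the Lebesgue dominated convergence theorem then yields $\int_{t_0}^{t-a/j_k}f(s,x_{j_k}(s))\,ds\to\int_{t_0}^{t}f(s,x(s))\,ds$ as $k\to\infty$, the vanishing delay $a/j_k\to0$ being absorbed using the absolute continuity of the integral of $m$. Hence $x$ satisfies the integral equation and is the desired Carath\'eodory solution. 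It is precisely here that all three Carath\'eodory hypotheses enter together: measurability of $f(\cdot,x)$ makes the integrals meaningful, continuity of $f(t,\cdot)$ gives pointwise convergence of the integrands, and the integrable bound provides domination.

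For uniqueness under the additional local Lipschitz hypothesis, suppose $x_1,x_2$ are two solutions on a common interval with $x_1(t_0)=x_2(t_0)=x_0$. Subtracting the integral equations and invoking the measurable Lipschitz function $k$ gives $|x_1(t)-x_2(t)|\leqslant\big|\int_{t_0}^{t}k(s)\,|x_1(s)-x_2(s)|\,ds\big|$; since $k$ is integrable, Gronwall's inequality forces $|x_1(t)-x_2(t)|\equiv0$, which establishes uniqueness and completes the proof.
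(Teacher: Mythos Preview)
Your proof is correct and follows the classical Tonelli delayed-argument approach to Carath\'eodory's existence theorem, combined with the standard Gronwall argument for uniqueness. The paper itself does not supply a proof of this proposition: it simply refers the reader to \cite{Coddington_Levinson,Filippov}. Your argument is essentially the one found in those references (in particular Coddington--Levinson), so there is nothing to compare beyond noting that you have written out in full what the paper leaves to the literature.
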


\noindent For the proof of Proposition \ref{caratheodory} and more results related to Carath\'eodory equation \eqref{eq:caratheodory_system}, the readers can refer to \cite{Coddington_Levinson,Filippov}.

%\begin{proof}
%	Fix $x_0,x\in\R^n$ such that $|x-x_0|\leqslant R$, $t>0$ and $u_0\in\R$. Then
%	\begin{align*}
%		&\,\int^t_0\theta_0(|\dot{\xi}(s)|)-(c_0+K|u_{\xi}(s)|)\ ds\\
%		\leqslant&\,\int^t_0L(\xi(s),u_{\xi}(s),\dot{\xi}(s))\ ds\leqslant\int^t_0L(\sigma(s),u_{\sigma}(s),\dot{\sigma}(s))\ ds\\
%		\leqslant&\,\int^t_0\overline{\theta}_0(|\dot{\sigma}(s)|)+K|u_{\sigma}(s)|\ ds\leqslant\int^t_0\overline{\theta}_0(R/t)+K|u_{\sigma}(s)|\ ds
%	\end{align*}
%	Therefore, \eqref{eq:straight_line}
%	\begin{align*}
%		\int^t_0\theta_0(|\dot{\xi}(s)|)\ ds\leqslant tC_1(u_0,t,R/t)
%	\end{align*}
%	Then
%	\begin{align*}
%		\int^t_0|\dot{\xi}(s)|\ ds\leqslant t(C_1(u_0,t,R/t)+\theta^*(1))=tC_2(u_0,t,R/t),
%	\end{align*}
%	and
%	\begin{align*}
%		\operatorname*{ess\ inf}_{s\in[0,t]}|\dot{\xi}(s)|\leqslant C_2(u_0,t,R/t).
%	\end{align*}
%\end{proof}

%%%%%%%%%%%%%%%%%%%%%%%% referenc.tex %%%%%%%%%%%%%%%%%%%%%%%%%%%%%%
% sample references
% %
% Use this file as a template for your own input.
%
%%%%%%%%%%%%%%%%%%%%%%%% Springer-Verlag %%%%%%%%%%%%%%%%%%%%%%%%%%
%
% BibTeX users please use
% \bibliographystyle{}
% \bibliography{}
%

\end{document}